\renewcommand{\tilde}{\widetilde}
\newcommand{\mathsym}[1]{{}}
\newcommand{\reals}{\mathbb{R}}
\newcommand{\naturals}{\mathbb{N}}
\newcommand{\integers}{\mathbb{Z}}
\newcommand{\diam}{\operatorname{diam}}
\newcommand{\inv}{^{-1}}
\newcommand{\cyc}[1]{\langle #1 \rangle}
\newcommand{\Z}{\integers}
\newcommand{\oskel}{^{(1)}}
\newcommand{\simpl}{\text{simpl}}
\renewcommand{\bar}{\overline}
\newcommand{\Cay}{\operatorname{Cay}}
\newcommand{\Stab}{\operatorname{Stab }}
\def\mc {\mathcal}
\def\CAT {\ensuremath{\operatorname{CAT}}}
\newcommand{\boundary}{\partial}
\newcommand{\visualBoundary}{\boundary_{\infty}}
\newcommand{\rightQ}[2]{\left.\raisebox{.2em}{$#1$}\middle/\raisebox{-.2em}{$#2$}\right.}
\newcommand{\leftQ}[2]{\left.\raisebox{-.2em}{$#2$}\middle\backslash\raisebox{.2em}{$#1$}\right.}
\newcounter{probnum}
\newtheorem{theorem}{Theorem}[section]
\newtheorem{definition}[theorem]{Definition}
\newtheorem{proposition}[theorem]{Proposition}
\newtheorem{cor}[theorem]{Corollary}
\newtheorem{lemma}[theorem]{Lemma}
\newtheorem{example}[theorem]{Example}
\newtheorem{remark}[theorem]{Remark}
\newtheorem{hypotheses}[theorem]{Hypotheses}
\newtheorem{notation}[theorem]{Notation}
\def\EG {\mc{E}G}
\def\EX {\mc{E}X_0}
\def\sk {\mathrm{sk}}
\renewcommand{\tilde}{\widetilde}
\renewcommand{\hat}{\widehat}
\renewcommand{\epsilon}{\varepsilon}
\title{Relative Cubulation of Small Cancellation Free Products}
\author{Eduard Einstein}
\author{Thomas Ng}
\begin{document}
	\maketitle
	
	\setcounter{tocdepth}{1}
	
	\begin{abstract}
		We expand the class of groups with relatively geometric actions on CAT(0) cube complexes by proving that it is closed under $C'(\frac16)$--small cancellation free products.
		We build upon a result of Martin and Steenbock who prove an analogous result in the more specialized setting of groups acting properly and cocompactly on CAT(0) cube complexes.
		Our methods make use of the same blown-up complex of groups to construct a candidate collection of walls.  
		However, rather than arguing geometrically, we show relative cubulation by appealing to a boundary separation criterion and proving that wall stabilizers form a sufficiently rich family of full relatively quasiconvex codimension-one subgroups. 
		The additional flexibility of relatively geometric actions has surprising new applications.  
		In particular, we prove that $C'(\frac16)$--small cancellation free products of residually finite groups are residually finite.
	\end{abstract}
	
	\section{Introduction}

In is unknown whether (relatively) hyperbolic groups are residually finite (even when the peripheral subgroups are residually finite) \cite[Problem~22]{HitchinProblems},\, \cite[Q~5.H1]{OpenProblemsGGT-BMS},\, \cite[Q~1.15]{BestvinaProblems}, and \cite[Open Problem~1]{ArzhantsevaSteenbock14}.  
On the other hand, work of Agol shows that hyperbolic groups acting properly and cocompactly on CAT(0) cube complexes are residually finite \cite[Theorem~1.1]{AgolVirtualHaken}.  
This article is concerned with the more general context of certain improper actions by relatively hyperbolic groups on CAT(0) cube complexes called \textbf{relatively geometric actions} developed by Einstein and Groves \cite{RelCannon}.  
The additional cubical geometry makes studying residual properties of relatively cubulated groups more tractable than for arbitrary relatively hyperbolic groups \cite{RelGeom, GM20}.  
In particular, we use these ideas to prove the following:

\begin{theorem}\label{T: smcancl res finite}
	Let $A_1, \dotsc, A_n$ be finitely generated residually finite groups with $n \geq 2$.  Any $C'(\frac16)$--small cancellation free product of $A_1, \dotsc, A_n$ by a finite set of relators is residually finite.  
\end{theorem}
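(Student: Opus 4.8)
\emph{Proof proposal.} The plan is to reduce the statement to the main cubulation theorem of this paper together with the known transfer of residual finiteness across relatively geometric actions. Write $G = (A_1 \freeprod \cdots \freeprod A_n)/\langle\langle \mathcal{R}\rangle\rangle$, where $\mathcal{R}$ is the given finite set of relators satisfying the $C'(\frac16)$--condition over $A_1 \freeprod \cdots \freeprod A_n$, and set $\mathcal{P} = \{A_1, \dots, A_n\}$, identifying each $A_i$ with its image in $G$. The classical theory of small cancellation over free products supplies two facts I would use as black boxes: each $A_i$ embeds in $G$, so the members of $\mathcal{P}$ genuinely are copies of the $A_i$; and $(G, \mathcal{P})$ is relatively hyperbolic. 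Hence every peripheral subgroup of this relatively hyperbolic structure is residually finite by the hypothesis on the $A_i$.

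Next I would invoke the main result of the paper: the pair $(G,\mathcal{P})$ admits a relatively geometric action on a CAT(0) cube complex. This is the substantial input — it is exactly what is assembled in the earlier sections from the blown-up complex of groups, the resulting candidate collection of walls, and the verification that wall stabilizers form a sufficiently rich family of full relatively quasiconvex codimension-one subgroups for the Einstein--Groves boundary separation criterion to apply.

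Finally, with $(G,\mathcal{P})$ relatively hyperbolic, acting relatively geometrically on a CAT(0) cube complex, and all peripheral subgroups residually finite, I would apply the theorem of Einstein--Groves, building on Groves--Manning \cite{RelGeom, GM20} — the relative analogue of Agol's theorem that cocompactly cubulated hyperbolic groups are residually finite — to conclude that $G$ is residually finite. Given the cubulation theorem, this last step is essentially formal; the only genuine work beyond citing the internal and external inputs is bookkeeping, namely checking that the peripheral subgroups of the relatively geometric action produced by the main theorem are precisely the conjugates of the factors $A_i$ (so that ``residually finite peripherals'' really does follow from the hypothesis on the $A_i$), and that the finiteness of the relator set assumed here is among the hypotheses of the cubulation theorem. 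I do not expect either check to be an obstacle: the mathematical content lives entirely in the cubulation theorem and in the cited residual-finiteness transfer.
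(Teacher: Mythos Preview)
Your approach is the paper's approach, but you gloss over the one step with actual content: verifying that the main cubulation theorem applies. That theorem does not take arbitrary factor groups as input; it requires each $(A_i,\mathcal{P}_i)$ to already act relatively geometrically on a CAT(0) cube complex, and an arbitrary residually finite group certainly need not be cubulated in any nontrivial sense. The paper's observation is that $(A_i,\{A_i\})$ is (trivially) a relatively hyperbolic pair and the action of $A_i$ on a single point is relatively geometric --- the quotient is a point, $A_i$ acts elliptically, and the unique cell stabilizer $A_i$ has finite index in the sole peripheral $A_i$. Taking $\mathcal{P}_i=\{A_i\}$, the main theorem then produces a relatively geometric action of $(G,\{A_1,\ldots,A_n\})$, and the rest of your outline goes through verbatim via the Einstein--Groves residual finiteness transfer. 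Your closing ``bookkeeping'' remark gestures at this, but the trivial-action-on-a-point observation is precisely what makes the cubulation theorem applicable without any cubulation hypothesis on the $A_i$, and it should be stated explicitly rather than left implicit.
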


We note that \Cref{T: smcancl res finite} has no requirement that the groups $A_i$ are cubulated.  \Cref{T: smcancl res finite} witnesses the versatility of relative cubulations.  

Recently, Martin and Steenbock showed that the class of groups acting properly and cocompactly on CAT(0) cube complexes is closed under taking $C'(\frac16)$--small cancellation free products (see \Cref{D: smcancel fp} for a formal definition) \cite{MartinSteenbock}.  Jankiewicz and Wise gave an alternate proof of the same result in the slightly more restricted setting of $C'(\frac{1}{20})$--small cancellation free products who apply their results to produce examples of cocompactly cubulated groups that do not virtually split as a graph of groups.  

In \cite{RelCannon}, the first author and Groves devised a relatively geometric boundary cubulation criterion based on a relatively hyperbolic (proper and cocompact) cubulation criterion for relatively hyperbolic groups in \cite{BergeronWise}. The criterion in \cite{RelCannon} requires peripherals to be one-ended. Here, we construct relatively geometric actions using \cite[Theorem~1.3]{PeripheralsInfEnds}, a stronger version of \cite[Theorem 2.6]{RelCannon} proved by the authors and Suraj Krishna MS that accommodates arbitrary finitely generated peripherals.  

Our main result is the following generalization of \cite{MartinSteenbock} to relatively geometric actions.

\begin{restatable}{theorem}{mainthm}\label{mainthm}
	Let $(A_i, \mc{P}_i)$ be a collection of $n \geq 2$ finitely generated groups that each act relatively geometrically on $\CAT(0)$ cube complexes, and let $\mc{P} = P_1 \cup \cdots \cup P_n$.  
	If $G$ is a $C'(\frac16)$--free product of $A_1, \dotsc, A_n$ by a finite set of relators, then $(G, \mc{P})$ acts relatively geometrically on a $\CAT(0)$ cube complex.
\end{restatable}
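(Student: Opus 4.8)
The plan is to follow the overall architecture of Martin and Steenbock --- build the blown-up complex of groups and read a candidate wall space off of its development --- but to replace their cocompact geometric arguments with the boundary separation criterion of Einstein and Groves. The first step is to record that $(G,\mc{P})$ is relatively hyperbolic: since each $A_i$ admits a relatively geometric action it is hyperbolic relative to $\mc{P}_i$, so $A_1 \freeprod \cdots \freeprod A_n$ is hyperbolic relative to $\mc{P}$, and passing to a $C'(\frac16)$ free product by a finite set of relators preserves relative hyperbolicity with the same peripheral structure, as is known for small cancellation over relatively hyperbolic groups. In particular $\partial(G,\mc{P})$ is defined, and by the Einstein--Groves criterion it suffices to exhibit a sufficiently rich family of full relatively quasi-convex codimension-one subgroups of $(G,\mc{P})$: one that is $G$-invariant with finitely many conjugacy classes and whose limit sets separate every pair of points of $\partial(G,\mc{P})$.

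Next I would construct the wall space. Let $Y$ be the development of the Martin--Steenbock blown-up complex of groups; it carries a cocompact $G$-action and decomposes into two kinds of regions --- copies of the cube complexes $X_i$ on which the $A_i$ act relatively geometrically, glued together along cells coming from the relators. Each relator contributes a hypergraph-type wall, exactly as in Wise's cubulation of $C'(\frac16)$ groups, and each $X_i$ contributes its hyperplanes, transported along the embedded copies of $X_i$ in $Y$; let $\mc{W}$ be the resulting $G$-invariant collection of walls. Cocompactness of the action on $Y$ gives finitely many $G$-orbits of walls, and the standard argument that a wall separates $Y$ into two deep halfspaces shows that every $\operatorname{Stab}_G(w)$ is codimension-one.

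The heart of the argument is to show each $\operatorname{Stab}_G(w)$ is full and relatively quasi-convex in $(G,\mc{P})$. Here one tracks a wall $w$ through the tree-graded structure of $G$: inside each coset of a vertex group $A_i$ the trace of $w$ is controlled by a hyperplane stabilizer of $X_i$, which is full relatively quasi-convex in $(A_i,\mc{P}_i)$ precisely because $A_i$ acts relatively geometrically on $X_i$, while along relator cells $w$ follows a hypergraph whose coarse geometry is governed by the $C'(\frac16)$ condition. Patching these local descriptions together --- using the Martin--Steenbock fellow-traveling estimates for hypergraph carriers, but now read in the relatively hyperbolic metric on $G$ rather than in a word metric --- should show that $\operatorname{Stab}_G(w)$ is relatively quasi-convex and that its intersection with every conjugate of a peripheral subgroup is finite or of finite index, i.e.\ full.

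Finally I would verify the separation hypothesis. Given distinct $\xi,\eta\in\partial(G,\mc{P})$: if the pair is non-parabolic, a bi-infinite geodesic between them either crosses a relator region, where the associated hypergraph wall --- embedded by the $C'(\frac16)$ condition --- separates $\xi$ from $\eta$, or penetrates deeply into some copy of $X_i$, where a transported hyperplane of $X_i$ does; if $\eta$ is a parabolic point stabilized by a conjugate of some $P\in\mc{P}_i$, then inside the corresponding peripheral coset the walls transported from $X_i$ already separate $\partial P$, since $A_i$ acts relatively geometrically on $X_i$ and so its hyperplanes separate the limit sets of its peripherals. Applying the Einstein--Groves criterion to $\mc{W}$ then yields a $\CAT(0)$ cube complex $\tilde{X}$ dual to $\mc{W}$ on which $(G,\mc{P})$ acts relatively geometrically, proving the theorem. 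I expect the third step to be the main obstacle: in the cocompact setting of Martin--Steenbock quasi-convexity of the wall stabilizers is essentially automatic, whereas here it requires the small-cancellation geometry over the factors and the relatively quasi-convex combination machinery to interlock without losing control near the peripheral subgroups.
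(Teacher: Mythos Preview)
Your architecture matches the paper's: build the Martin--Steenbock blown-up complex $\EG$, use two families of walls (hypergraphs in the polygonal base $X_{bal}$ and hyperstructures in $\EG_{bal}$ restricting to hyperplanes on the fibers), show their stabilizers are full relatively quasi-convex codimension-one, and apply the Einstein--Groves criterion. The main divergence is in how the separation and quasi-convexity arguments are organized, and your separation dichotomy has a real gap.

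The paper works through an intermediate peripheral structure $\mc{Q}=\{A_1,\ldots,A_n\}$ and uses Yang's equivariant surjection $\phi:\partial_{\mc{P}}G\to\partial_{\mc{Q}}G$ to set up the dichotomy: either $\phi(p)\neq\phi(q)$, in which case one separates in $\partial_{\mc{Q}}G$ by an $X_{bal}$-hypergraph and pulls back; or $p,q$ both lie in the boundary of a single fiber group $G_v$, in which case one uses an $\EG_{bal}$-hyperstructure meeting $EG_v$ in a hyperplane, with the generalized-fine-graph machinery of Einstein--Groves--Ng promoting separation in $EG_v$ to separation in $\partial_{\mc{P}}G$. Your parabolic/conical split is not the right one: your parabolic case does not handle a parabolic $\eta$ paired with a $\xi$ lying outside the factor that contains $\eta$, and in your conical case ``crosses a relator region'' does not by itself ensure that $p,q$ avoid the wall stabilizer's limit set. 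The paper closes that gap with acylindricity of the $G$-action on $X_{bal}$: among the infinitely many hypergraphs skewered by a geodesic from $p$ to $q$, only finitely many can have $p$ (respectively $q$) in the limit set of their stabilizer, so some hypergraph in the skewer set works.

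The intermediate structure $\mc{Q}$ also streamlines relative quasi-convexity, and this is where your anticipated difficulty largely dissolves. A wall stabilizer acts cocompactly on an embedded tree in $X_{bal}$, which gives relative quasi-convexity in $(G,\mc{Q})$ directly; Yang's criterion then upgrades this to $(G,\mc{P})$ once one checks that the intersection with each fiber group --- which is a hyperplane stabilizer in that fiber, hence relatively quasi-convex there by the relatively geometric hypothesis --- is relatively quasi-convex in $(G,\mc{P})$. This bypasses any need to run fellow-traveling or combination arguments in the non-proper space $\EG$.
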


\Cref{mainthm} provides a wealth of new examples of groups with relatively geometric actions on CAT(0) cube complexes. 

Small cancellation theory has roots in Dehn's solution to the word problem for closed hyperbolic surface groups \cite{DehnBook}.
Initial work of Tartakovski\u{\i} \cite{Tartakovskii49}, Greendlinger \cite{Greendlinger1,Greendlinger2}, Lyndon and Schupp \cite{LyndonSchupp}, among others formalized the notion of $C'(\frac16)$ groups, which provides a rich source of word hyperbolic groups.   
A construction of Rips produces important examples of (residually finite) $C'(\frac16)$ groups with exotic quotients \cite{RipsConstruction,WiseRFRips}.  
In particular, small cancellation techniques used in the Rips construction gave rise to examples of torsion-free groups without the unique product property  \cite{GruberMartinSteenbock,RipsSegev,SteenbockRS}.  
Arzhantseva and Steenbock extended these ideas to the setting of $C'(\frac16)$--small cancellation free products \cite{ArzhantsevaSteenbock14}.

$C'(\frac16)$--small cancellation free products generalize classical $C'(\frac16)$ groups by allowing factor groups other than $\Z$.  
Basic examples include one-relator quotients of the form $\rightQ{A*B}{\llangle r \rrangle}$ where $A,B$ are groups and $r$ is a word in $A*B$ satisfying a technical overlap condition discussed in detail in \Cref{S: background}.  
Small cancellation free products have several additional applications.  For example they can be used to solve embedding problems for infinite groups \cite{MillerSchupp,SchuppEmbeddingsSimple} and to construct examples of non-degenerate hyperbolically embedded subgroups of non-relatively hyperbolic groups \cite[Theorem 6.5]{GruberSisto}. 
Most importantly for us, the factors of $C'(\frac16)$--small cancellation free products embed and the resulting group is hyperbolic relative to the factors \cite{Pankratev} (see also \cite[Theorem~1]{SteenbockRS} and \Cref{P: hyp rel vertex} which follows from \cite{LyndonSchupp}).

Residual properties of  hyperbolic groups acting on CAT(0) cube complexes, especially separability of quasiconvex subgroups, played a central role in Agol's resolution of the Virtual Haken Conjecture \cite{AgolVirtualHaken} and the Virtual Fibering Conjecture \cite{AgolVirtualFibering} for closed hyperbolic 3-manifolds.  
Using cubical techniques, Wise proved virtual fibering for finite-volume hyperbolic 3-manifolds \cite{WiseManuscript}. 
There have been many other efforts to extend cubical machinery to relatively hyperbolic groups, e.g. \cite{ArxivThesis,HruskaWise14,HW2015,OregonReyes,SW2015}, but most of these approaches have focused on proper and cocompact or cosparse actions of groups on CAT(0) cube complexes. 

Groups that act on $\CAT(0)$ cube complexes may fail to be residually finite and can even have no non-trivial quotients.  Indeed, there are examples of non-residually finite groups and infinite simple groups that act properly and cocompactly on products of simplicial trees \cite{BurgerMozes, WiseCSC} (see also \cite{CapraceBMWsurvey} and references therein).  
On the other hand, groups acting relatively geometrically on CAT(0) cube complexes have favorable residual properties when the peripheral subgroups are residually finite, see \Cref{T: fqcerf} from \cite{RelGeom} below. \Cref{mainthm} and \Cref{T: fqcerf} together imply:
\begin{cor}\label{res finite}
	Let $G$ be a small cancellation free product as in \Cref{mainthm}.
	If each element of $\mc{P}_i$ is residually finite for all $i$, then $G$ is residually finite and all full relatively quasiconvex subgroups are separable. 
\end{cor}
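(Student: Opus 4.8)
The plan is to deduce this directly by combining \Cref{mainthm} with the separability theorem \Cref{T: fqcerf} of Einstein and Groves. First I would invoke \Cref{mainthm}: since each $(A_i, \mc{P}_i)$ acts relatively geometrically on a $\CAT(0)$ cube complex and $G$ is a $C'(\frac16)$--small cancellation free product of the $A_i$ by a finite set of relators, the pair $(G, \mc{P})$ with $\mc{P} = P_1 \cup \cdots \cup P_n$ acts relatively geometrically on a $\CAT(0)$ cube complex. In particular $(G, \mc{P})$ is a relatively hyperbolic pair; this is built into the definition of a relatively geometric action, and it can also be seen directly from the fact that $G$ is hyperbolic relative to the free factors $A_1, \dots, A_n$ (Proposition~\ref{P: hyp rel vertex}, see also \cite{Pankratev}), that each $A_i$ is hyperbolic relative to $\mc{P}_i$ by hypothesis, and that relative hyperbolicity is transitive.

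Next I would observe that the hypothesis of the corollary — every element of $\mc{P}_i$ is residually finite, for all $i$ — says precisely that every peripheral subgroup of $(G, \mc{P})$ is residually finite. Thus the relatively geometric action produced by \Cref{mainthm} satisfies exactly the hypotheses of \Cref{T: fqcerf}, and that theorem applies verbatim: it yields that $G$ is residually finite and that every full relatively quasiconvex subgroup of $(G, \mc{P})$ is separable in $G$.

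Since both ingredients are already in hand, I do not expect any real obstacle here: the corollary is a formal consequence of \Cref{mainthm} and \Cref{T: fqcerf}. The one point that merits a sentence of care is simply checking that the peripheral collection $\mc{P}$ along which \Cref{mainthm} produces the relatively geometric action coincides with the collection whose members are assumed residually finite, so that \Cref{T: fqcerf} can be applied without adjusting the peripheral structure; this holds by construction, as $\mc{P} = P_1 \cup \cdots \cup P_n$ in both statements.
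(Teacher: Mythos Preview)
Your proposal is correct and matches the paper's own argument exactly: the paper states that with \Cref{T: fqcerf}, \Cref{res finite} follows immediately from \Cref{mainthm}, and your write-up simply unpacks this one-line deduction. The extra care you take in noting that the peripheral collection $\mc{P}$ produced by \Cref{mainthm} is the same one whose members are assumed residually finite is appropriate but not strictly necessary.
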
 
\Cref{T: smcancl res finite} is much stronger than similar residual finiteness results that follow from \cite{MartinSteenbock,JankiewiczWise} because the relatively geometric machinery requires fewer restrictions on the factor groups. 
\Cref{T: smcancl res finite} is a special case of \Cref{res finite}, see \Cref{S: relgeom background} for a proof. 

We hope this article also serves as an advertisement for constructing relative cubulations using the boundary cubulation criteria from \cite{RelCannon,PeripheralsInfEnds}. While Martin and Steenbock must explicitly construct a single wall space to cubulate in \cite{MartinSteenbock}, these relatively geometric criteria allow us to build a sufficient collection of codimension-$1$ subgroups for relative cubulation from simultaneously considering two different actions. 


\subsection{Structure of this paper:}

The proof of \Cref{mainthm} for general $C'(\frac16)$--small cancellation free products is not substantially different from the proof for examples of the form $G = \rightQ{A * B}{\llangle w \rrangle}$.   
For ease of notation we restrict ourselves to this special case and remark where additional work is necessary if it is not obvious.  

In \Cref{S: background}, we record some basic facts about relatively geometric actions and small cancellation free products.   In particular, we describe a boundary criterion \Cref{t:Rel BW} for relative cubulation of Krishna MS and the authors, the relationship between relative geometric actions and residual finiteness \Cref{T: fqcerf}, and the proofs of both \Cref{res finite} and \Cref{T: smcancl res finite} assuming the statement of \Cref{mainthm}. 

Our proof of \Cref{mainthm} relies on a blow-up construction for non-positively curved complexes of groups originating in work of Martin \cite{Martin:NPC_boundary} that Martin and Steenbock also employed in the setting of $C'(\frac16)$--small cancellation free products in \cite{MartinSteenbock}.  
We review details of this construction in \Cref{S: MS complexes}.  Specifically, we recall a developable complex of groups structure for $G$ in \Cref{S: complex}, relative hyperbolicity and acylindricity of the action in \Cref{S: relhyp,S: acylindricity}, and the blown-up complex $\EG$ in \Cref{SS:blowupConstruction}.  

In \Cref{S: wall construction}, we introduce \textbf{hyperstructures}  (see \Cref{D:hyperstructure}), and study properties of the hyperstructures on $\EG_{bal}$ and $X_{bal}$, which are subdivisions of $\EG$ and the development of the complex of groups from \Cref{S: MS complexes} respectively. In \cite{MartinSteenbock}, the hyperstructures of $\EG_{bal}$ and lifts of the hyperstructures of $X_{bal}$ to $\EG_{bal}$ were used (with different names) to construct a wallspace on $\EG_{bal}$ that yields a geometric action on a CAT(0) cube complex when the vertex groups are properly and cocompactly cubulated.

Our approach diverges from \cite{MartinSteenbock} at this point. We develop a general framework for relatively cubulating a relatively hyperbolic group acting on a polygonal complex where each of the vertex stabilizers admits a relatively geometric cubulation. We anticipate this framework will also be useful in forthcoming work for relatively cubulating random quotients of free products. 
Since our blowup $\EG_{bal}$ is locally infinite, new techniques are needed to turn the hyperstructures into appropriate codimension--$1$ subgroups. 

Our proof relies heavily on techniques for studying \textbf{generalized fine graphs} (see \Cref{D:GeneralizedFine}) developed by Groves and the authors in \cite{GenFine}, which we review in \Cref{S: Peripheral Structures}. These tools allow us to study Bowditch boundaries using the geometry of the complexes described in \Cref{S: MS complexes}. 
We describe the relationship between the Bowditch boundary of $G$ with respect to the coarser peripheral structure consisting of the entire factor groups and the Bowditch Boundary of $G$ with respect to the peripheral subgroups of the factor groups using results from \cite{WenyuanYang2014}.

In \Cref{S: polygonal case}, we show that if a relatively hyperbolic pair acts on a complex like $X_{bal}$ so that maximal parabolics stabilize exactly one vertex and edge stabilizers are finite, then any pair of points in the Bowditch boundary can be separated by a finite index subgroup of a hyperstructure stabilizer. We also prove that stabilizers of hyperstructures are full and relatively quasiconvex subgroups. 
\Cref{S: X hypotheses satisfied} is devoted to showing that $X_{bal}$ satisfies the hypotheses of the machinery developed in \Cref{S: polygonal case}.

In \Cref{S: blowup main}, we describe relatively geometric blowups of polygonal complexes and prove that they admit relatively geometric cubulations. We show that any two boundary points not in the boundary of a vertex stabilizer of the polygonal complex can be separated using results from \Cref{S: polygonal case}. Then we separate points in the boundary of a single vertex stabilizer by extending a hyperplane from the relatively geometric cubulation of the vertex stabilizer to a full relatively quasiconvex codimension--$1$ subgroup. At this point we will have all the hyppotheses needed to apply \Cref{t:Rel BW}. 
Finally, we conclude the paper with the proof of \Cref{mainthm}: we show that $\EG_{bal}$ is relatively geometric blowup and apply the main result of \Cref{S: blowup main}.  

\subsection*{Acknowledgements}
The authors would like to thank Daniel Groves, Chris Hruska and Jason Manning for useful conversations that helped shape this work.
The authors also thank Michael Hull
for carefully reading a previous version and providing helpful comments and corrections. We also thank Suraj Krishna MS, MurphyKate Montee and Markus Steenbock for conversations and comments that helped correct an error in a prior proof of \Cref{P: onecross}. 
EE was partially supported by an AMS--Simons travel grant and a postdoctoral fellowship from the Swartz Foundation.
TN was partially supported by ISF grant 660/20, an AMS--Simons travel grant, and at the Technion by a Zuckerman Fellowship.
	
	

\section{Background}	\label{S: background}

Our main objects of study are relatively hyperbolic groups.  Relatively hyperbolic groups generalize the class of word hyperbolic groups in the sense of Gromov \cite{GromovHyperbolicGroups} to also include fundamental groups of cusped hyperbolic manifolds with finite volume among many other important examples.  
Heuristically, relatively hyperbolic groups have Cayley graphs that look $\delta$--hyperbolic away from translates of certain subgroups.
One way to formalize this is due to Bowditch and uses \textbf{fine hyperbolic graphs}.

\subsection{Fine hyperbolic graphs and relative hyperbolicity} 
A \textbf{circuit} in a graph $\Gamma$ is an embedded loop. 
Recall that a graph $\Gamma$ is \textbf{fine} if for every edge $e$ and $n\in\naturals$, there exist finitely many circuits that pass through $e$ and have length at most $n$. 
For example, the coned-off Cayley graph of a relatively hyperbolic group pair is a fine hyperbolic graph. 
There are many equivalent definitions of relative hyperbolicity, see for example \cite{Hruska2010}. We will mainly use the following definition for relative hyperbolicity in terms of fine hyperbolic graphs:
\begin{definition}[{\cite[Definition 2]{BowditchRH}, written as stated in \cite[Definition 3.4 (RH-4)]{Hruska2010}}]\label{D: fine relhyp}
	Suppose $G$ acts on a $\delta$-hyperbolic graph $\Gamma$ with finite edge stabilizers and finitely many $G$-orbits of edges. If $K$ is fine, and $\mc{P}$ is a set of representatives of the conjugacy classes of infinite vertex stabilizers, then $(G,\mc{P})$ is a \textbf{relatively hyperbolic (group) pair}.
	Conjugates of subgroups in $\mc{P}$ are called \textbf{peripheral subgroups} or \textbf{maximal parabolic subgroups}.
\end{definition}

In \Cref{S: MS complexes}, we will also briefly use an equivalent definition of relative hyperbolicity given in terms of linear relative isoperimetric function to prove \Cref{P: hyp rel vertex}.

We will need to construct \textbf{full relatively quasiconvex} subgroups of the pair $(G,\mc{P})$ to construct relative cubulations. We use the following criterion to show that a subgroup is relatively quasiconvex:
\begin{theorem}[{\cite[Theorem 1.7]{WiseMP10}}] \label{T: qccrit}
	Let $(G,\mc{P})$ be a relatively hyperbolic pair acting cocompactly on a fine hyperbolic graph so that every edge stabilizer is finite. A subgroup $H\le G$ is relatively quasi-convex in $(G,\mc{P})$ if and only if $H$ stabilizes a quasi-convex $H$--cocompact connected subgraph of $\Gamma$. 
\end{theorem}
We will not need to engage further with the technical details of being relatively quasiconvex. Therefore, we omit a formal definition and refer the interested reader to \cite[Section 6]{Hruska2010}, which gives several equivalent definitions of relative quasiconvexity.

Fullness is required for our cubulation criterion: 
\begin{notation} Let $G$ be a group, $H\le G$ and $g\in G$. Then $H^g = gPg\inv$. \end{notation}

\begin{definition}
	Let $(K,\mc{D})$ be a relatively hyperbolic group pair. A subgroup $H\le K$ is \textbf{full} if for all $D\in\mc{D}$ and $k\in K$, $|H\cap D^k| = \infty $ implies that $H\cap D^k$ is finite index in $D^k$. 
\end{definition}

\subsection{Relatively geometric actions on $\CAT(0)$ cube complexes} 
\label{S: relgeom background} 
The $\CAT(0)$ condition is a metric notion of non-positive curvature.  $\CAT(0)$ cube complexes comprise an important family of examples where the geometry of the space is naturally encoded by the combinatorics of codimension-one subsets called hyperplanes.  For much more on $\CAT(0)$ cube complexes see for example \cite{SageevNotes,WiseBook}.  
Group acting properly and cocompactly on $\CAT(0)$ cube complexes, sometimes referred to as \emph{cubulated groups},  have been well-studied.  
The first author and Groves introduced the following in \cite{RelCannon}. 
\begin{definition} \label{D: relgeom}
	Let $(G, \mc{P})$ be a relatively hyperbolic pair where $G$ acts by isometries on a CAT(0) cube complex $\tilde{X}$. The action of $(G,\mc{P})$ is \textbf{relatively geometric (with respect to $\mc{P}$)} if:
	\begin{enumerate}
		\item the action of $G$ on $\tilde{X}$ is cocompact, 
		\item every peripheral subgroup $P \in \mc{P}$ acts elliptically, and
		\item cell stabilizers are either finite or conjugate to a finite index subgroup of some $P \in \mc{P}$. 
	\end{enumerate}
	Groups that admit a relatively geometric action with respect to some collection of peripheral subgroups are called \textbf{relatively cubulated}.
\end{definition}

Given a subgroup $H\leq G$, we write $\Lambda H$ to denote its limit set.
A relatively hyperbolic group may admit a relatively geometric action with respect to only certain peripheral structures.  
Indeed the action of any group $G$ on a point is a relatively geometric action for $(G, \{G\})$.

\begin{definition}
	Let $(K,\mc{D})$ be a relatively hyperbolic pair. If there is a collection of subgroups $\mc{D}_0$ such that each $D_0\in\mc{D}_0$ is conjugate into some $D\in \mc{D}$ and $(G,\mc{D}_0)$ is relatively hyperbolic, then we say that $(G,\mc{D}_0)$ is a \textbf{refined peripheral structure} for $(G,\mc{D})$.
\end{definition}

We use the following relatively geometric cubulation criterion to produce a relatively geometric action:
\begin{restatable}{theorem}{cubulationcriterion}\label{t:Rel BW} $(${\cite[Theorem 1.3]{PeripheralsInfEnds}, see also  \cite[Theorem 2.6]{RelCannon}}$)$ 
	Let $(G,\mc{P})$ be a finitely generated relatively hyperbolic and suppose that for every pair of distinct points $p$ and $q$ in the Bowditch Boundary $\partial_{\mc{P}}G$ there is a full relatively quasiconvex codimension $1$ subgroup $H$ of $G$ so that $p$ and $q$ lie in $H$--distinct components of $\partial G \smallsetminus \Lambda H$.  Then there exist finitely many full relatively quasiconvex codimension--$1$ subgroups of $G$ and a refined peripheral structure $(G,\mc{P}')$ for $(G,\mc{P})$ so that the action of $(G,\mc{P}')$ on the dual cube complex is relatively geometric.
	
	Moreover, if each $P \in \mc{P}$ acts elliptically on the dual cube complex then no refinement is needed, i.e., we may take $\mc{P}' = \mc{P}$.
\end{restatable}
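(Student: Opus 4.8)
The plan is to adapt the Sageev and Bergeron--Wise cubulation machinery to the relatively hyperbolic setting, using the Bowditch boundary $\partial_{\mc{P}}G$ in place of the Gromov boundary and the coned-off Cayley graph $\hat{\Gamma}$ --- which is $\delta$-hyperbolic and fine --- in place of the Cayley graph. Each full relatively quasi-convex codimension-$1$ subgroup $H$ determines, up to $H$-translation, finitely many walls: partitions of $\hat{\Gamma}^{(0)}$ with $H$-cocompact frontier, whose two halfspaces have closures in $\hat{\Gamma}\cup\partial_{\mc{P}}G$ that meet only inside $\Lambda H$. The hypothesis says every pair of distinct points of $\partial_{\mc{P}}G$ is separated, in the boundary, by one such wall coming from one such subgroup. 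The goal is to extract finitely many subgroups $H_1,\dots,H_k$, form the $G$-invariant wallspace $\mc{W}$ generated by their walls and all $G$-translates, build Sageev's dual $\CAT(0)$ cube complex $\tilde X$, and verify the three conditions of \Cref{D: relgeom} for the $G$-action on $\tilde X$.

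First I would reduce to finitely many subgroups. Given distinct $p,q\in\partial_{\mc{P}}G$ with separating wall $w_{p,q}$ coming from $H_{p,q}$, the set of pairs $(p',q')$ with $p'$ and $q'$ in the interiors of the closures of the two halfspaces of $w_{p,q}$ is open in the space $\partial_{\mc{P}}G^{(2)}$ of distinct pairs, and these sets together with their $G$-translates cover $\partial_{\mc{P}}G^{(2)}$. Since $G$ acts cocompactly on $\partial_{\mc{P}}G^{(2)}$ (a standard property of relatively hyperbolic groups), finitely many $G$-orbits suffice; this produces $H_1,\dots,H_k$ so that every pair of distinct points of $\partial_{\mc{P}}G$ is separated by a $G$-translate of one of their walls. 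Let $\mc{W}$ be the resulting wallspace, which has finitely many $G$-orbits of walls, and let $\tilde X$ be the $G$-invariant component of Sageev's dual containing the principal ultrafilters. That $\tilde X$ is finite dimensional follows from a bounded-packing argument: pairwise crossing walls yield pairwise transverse cosets of the $H_i$, and a Ramsey-type bound for full relatively quasi-convex codimension-$1$ subgroups (in the spirit of Gitik--Mitra--Rips--Sageev, adapted to the relative setting by Hruska--Wise) controls their number.

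For condition (2) of \Cref{D: relgeom}, a peripheral $P\in\mc{P}$ has bounded orbit in $\hat{\Gamma}$ (it lies in the star of a cone vertex), and since $\hat{\Gamma}$ is fine and $\mc{W}$ has finitely many $G$-orbits of walls, only finitely many walls separate two points of a given bounded subset of $\hat{\Gamma}^{(0)}$; hence $P$ has bounded orbit in $\tilde X$ and fixes a point, so acts elliptically. For condition (3), a finite-index subgroup $Q$ of a cube stabilizer fixes the cube pointwise, hence fixes each dual wall and lies in the intersection of the corresponding (full relatively quasi-convex) wall stabilizers. If such a $Q$ were infinite and not parabolic it would contain a loxodromic $g$ with distinct fixed points $g^{\pm}\in\partial_{\mc{P}}G$; choosing $w_0\in\mc{W}$ separating $g^{+}$ from $g^{-}$, no power of $g$ stabilizes $w_0$ (as $g^{\pm}\notin\Lambda(\operatorname{Stab}(w_0))$), so the walls $g^{n}w_0$ ($n\le 0$) are pairwise distinct, march toward $g^{-}$, and produce an infinite strictly descending chain of halfspaces all lying in the orientation of a $g$-fixed vertex of the cube --- contradicting the descending chain condition satisfied by vertices of $\tilde X$. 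Hence cube stabilizers have no loxodromic elements and are finite or parabolic; a separate argument propagating fullness of the $H_i$ (and using ellipticity of the peripherals) shows the infinite ones are conjugate to a finite-index subgroup of some $P\in\mc{P}$.

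The main obstacle is cocompactness, condition (1). The key point is a dichotomy for vertices of $\tilde X$ driven by the separation hypothesis: for a vertex $v$ with orientation $\sigma_v$, consistency of $\sigma_v$ and compactness of $\hat{\Gamma}\cup\partial_{\mc{P}}G$ make the intersection of the closed halfspaces in $\sigma_v$ nonempty, and the separation hypothesis forces it to be a single point $\xi$ --- two distinct such points would be separated by a wall of $\mc{W}$, but $\sigma_v$ chooses exactly one side of every wall. Using that the action of $G$ on $\partial_{\mc{P}}G$ is geometrically finite, together with fineness of $\hat{\Gamma}$, one then argues that $\sigma_v$ lies within uniformly bounded $\tilde X$-distance of a principal ultrafilter --- at a vertex of $\hat{\Gamma}$ when $\xi$ is a conical limit point, and at the corresponding cone vertex when $\xi$ is parabolic (here the bounded-orbit argument of the previous paragraph re-enters). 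Since $\hat{\Gamma}^{(0)}$ has finitely many $G$-orbits and the wall pattern near a cone vertex is peripheral-cocompact with only finitely many wall-orbits crossing it, this coarse density of the principal ultrafilters yields finitely many $G$-orbits of cubes, hence cocompactness. Finally one invokes Sageev's theorem that $\tilde X$ is $\CAT(0)$, and \Cref{D: relgeom} is verified. I expect the vertex dichotomy --- making precise, with uniform constants, that the separation hypothesis keeps $\tilde X$ coarsely no larger than $\hat{\Gamma}$ --- to be the delicate heart of the argument.
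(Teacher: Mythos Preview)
The paper does not contain a proof of this statement. Theorem~\ref{t:Rel BW} is quoted verbatim from \cite[Theorem~2.6]{RelCannon} and is used throughout as a black box: the contribution of the present paper is to \emph{verify the hypotheses} of this criterion for $C'(\frac16)$--small cancellation free products (Sections~\ref{S: full}--\ref{S: separation}), not to reprove the criterion itself. So there is no ``paper's own proof'' to compare your proposal against.

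That said, your sketch is a reasonable outline of the argument one expects to find in \cite{RelCannon}: it is the Bergeron--Wise strategy transported to the relative setting, using cocompactness of the $G$--action on the space of distinct pairs in $\partial_{\mc{P}}G$ to extract finitely many subgroups, and then analysing the dual cube complex via the coned-off Cayley graph. The shape is right; the places where care is genuinely needed are exactly the ones you flag --- the vertex dichotomy driving cocompactness, and the passage from ``infinite cube stabilizer with no loxodromic'' to ``finite index in a peripheral'' (this last step uses fullness of the $H_i$ in an essential way and deserves more than a parenthetical). If your goal is to understand the present paper, you can safely treat Theorem~\ref{t:Rel BW} as given and focus instead on how the paper produces the separating subgroups.
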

To prove \Cref{mainthm}, we will provide a sufficient collection of full relatively quasiconvex codimension--$1$ subgroups of a small-cancellation free product that satisfy the hypotheses of \Cref{t:Rel BW}.
Relatively geometric actions have favorable residual properties when the peripherals are residually finite:
\begin{theorem}[{\cite[Corollary 1.7]{RelGeom}, see also \cite[Theorem 4.7]{GM20}}]\label{T: fqcerf}
	Let $(G,\mc{P})$ act relatively geometrically on a CAT(0) cube complex $\tilde{X}$. If every $P\in\mc{P}$ is residually finite, then:
	\begin{enumerate}
		\item $G$ is residually finite, and
		\item every full relatively quasiconvex subgroup of $G$ is separable. 
	\end{enumerate}
\end{theorem}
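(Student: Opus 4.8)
The plan is to derive both conclusions from relative Dehn filling together with Agol's theorem that word hyperbolic groups acting properly and cocompactly on $\CAT(0)$ cube complexes are virtually special, exploiting the fact that a relatively geometric action on $\tilde X$ survives sufficiently long peripheral fillings in a controlled (``cubical'') way. The guiding idea is to trade the relatively hyperbolic pair $(G,\mc{P})$ for a genuinely hyperbolic cubulated quotient $\bar G$ in which classical separability technology applies, and then lift the resulting finite quotients back to $G$. Residual finiteness of the $P \in \mc{P}$ is what makes this possible: it lets us take the filling kernels to have finite index (so that $\bar G$ is word hyperbolic rather than merely relatively hyperbolic) and it lets us control the parabolic directions of the separation.

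For part (1), I would begin with a nontrivial $g \in G$. Since each $P_i$ is residually finite, choose finite-index normal subgroups $N_i \lhd P_i$ as deep as needed; by the relative Dehn filling theorems of Osin and of Groves--Manning, for deep enough $N_i$ the quotient $\bar G = G / \llangle N_1, \dots, N_n \rrangle$ is hyperbolic relative to the finite groups $P_i/N_i$, hence word hyperbolic, with $g$ having nontrivial image. The first genuine step is to check the filling can be taken cubical: because $(G,\mc{P})$ acts relatively geometrically on $\tilde X$, deep enough fillings act properly and cocompactly on a $\CAT(0)$ cube complex built from $\tilde X$. Then Agol's theorem gives that $\bar G$ is virtually special, hence residually finite, so the nontrivial image of $g$ is detected in a finite quotient of $\bar G$ and therefore of $G$.

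For part (2), let $H \le G$ be full relatively quasi-convex and $g \in G \setminus H$. Fullness gives that $H \cap P^\gamma$ is finite or of finite index in $P^\gamma$ for each peripheral conjugate, so we choose the filling kernels $N_i$ compatibly with the peripheral structure of $H$ and small enough to avoid the finitely many obstructions that would otherwise place $g$ in $H \cdot \ker(G \to \bar G)$ (when $g$ lies in a conjugate of some $P_i$ this is exactly where residual finiteness of $P_i$ is used, since $H \cap P_i$ then has finite index in $P_i$ and must be separated from $g$ inside $P_i/N_i$). For sufficiently long such fillings $H$ maps to a quasi-convex subgroup $\bar H$ of the hyperbolic group $\bar G$ with $\bar g \notin \bar H$. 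Since $\bar G$ is virtually special, its quasi-convex subgroups are separable by Haglund--Wise, so some finite-index $\bar K \le \bar G$ contains $\bar H$ but not $\bar g$; as $H$ maps into $\bar H$, the preimage of $\bar K$ is a finite-index subgroup of $G$ containing $H$ but not $g$.

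The main obstacle I anticipate is the relative bookkeeping required to run a \emph{single} cubical Dehn filling that simultaneously keeps the quotient hyperbolic and properly cocompactly cubulated, sends $H$ to a quasi-convex subgroup with the intersection behaviour needed for the pullback argument, and separates $g$ from $H$ in the quotient, while propagating the parabolic part of the separation back up. Verifying that relatively geometric actions are preserved under long fillings --- the cubical Dehn filling machinery, combined with the filling results of Groves--Manning --- is the technical heart, and this is precisely what the cited work of Einstein--Groves \cite{RelGeom} (see also \cite{GM20}) supplies.
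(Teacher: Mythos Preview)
The paper does not prove this theorem at all: it is stated as a citation of \cite[Corollary~1.7]{RelGeom} (see also \cite[Theorem~4.7]{GM20}) and used as a black box to derive \Cref{res finite} and \Cref{T: smcancl res finite} from \Cref{mainthm}. There is therefore no ``paper's own proof'' to compare your proposal against.

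That said, your sketch is an accurate outline of the strategy in the cited references. The route through sufficiently long peripheral Dehn fillings that inherit a proper cocompact cubulation from the relatively geometric action, followed by Agol's theorem to get virtual specialness of the hyperbolic filled quotient $\bar G$, and then Haglund--Wise separability of quasi-convex subgroups pulled back along $G \to \bar G$, is precisely the method of Einstein--Groves and Groves--Manning. Your final paragraph already names the genuine technical content --- that relatively geometric actions survive long fillings in a way that yields a proper cocompact cubulation of the quotient --- and correctly attributes it to \cite{RelGeom}. The only cautionary remark is that your proposal is a proof \emph{outline} rather than a proof: the statements ``deep enough fillings act properly and cocompactly on a $\CAT(0)$ cube complex built from $\tilde X$'' and ``$H$ maps to a quasi-convex subgroup $\bar H$ with $\bar g \notin \bar H$'' each require substantial work (the former is the main theorem of \cite{RelGeom}, and the latter needs the relatively hyperbolic Dehn filling control of quasi-convex subgroups from the Groves--Manning--Osin circle of results), so the sketch is really a reduction to those theorems rather than an independent argument.
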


The statement of \Cref{mainthm} implicitly relies on the fact that the resulting small cancellation free product is hyperbolic relative to the union of the peripheral subgroups of the factor groups, see \Cref{P: hyp rel vertex} and \Cref{rh structure P}.  
With \Cref{T: fqcerf}, \Cref{res finite} follows immediately from \Cref{mainthm}. Assuming \Cref{mainthm}, we can then prove \Cref{T: smcancl res finite}:
\begin{proof}[Proof of \Cref{T: smcancl res finite} assuming \Cref{mainthm}]
	Observe that $(A_i,\{A_i\})$ is a relatively hyperbolic pair, and the trivial action of $(A_i,\{A_i\})$ on a point is a relatively geometric action. Therefore \Cref{mainthm} implies that if $G$ is any $C'(\frac16)$-small cancellation free product of $A_1,A_2,\ldots,A_n$, then $G$ acts relatively geometrically on a CAT(0) cube complex with respect to the peripheral structure $\{A_1,\ldots,A_n\}$. 
	Therefore \Cref{res finite} implies that $G$ is residually finite. 
\end{proof} 

\subsection{Small cancellation free products} 
\label{SS:smallCancel}

Let $A_1, \dotsc, A_n$ be groups.
Let $F = A_1 * \cdots * A_n$ be the free product. 
We will refer to the $A_i$ as \textbf{vertex groups} or \textbf{factor groups}. 
Recall that every non-trival element $w\in F$ can be written uniquely in a normal form $w=w_1w_2w_3\ldots w_\ell$ where each $w_i$ lies in a vertex group and $w_i,w_{i+1}$ are in distinct vertex groups. 
Then there is a natural length $|w|$ where $|w|=\ell$.
The word $w$ is \textbf{cyclically reduced} if whenever $|w|>1$, $w_1\ne w_\ell\inv$. 

A subset $\mc{R}\subseteq F$ is \textbf{symmetrized} if every $r\in\mc{R}$ is cyclically reduced and every cyclically reduced conjugate of $r^{\pm 1}$ lies in $\mc{R}$. 

Let $G = \rightQ{F}{\llangle \mc{R} \rrangle}$. Adding all cyclically reduced conjugates of elements of $\mc{R}$ and their inverses does not change the quotient group $G$, so we can assume $\mc{R}$ is symmetrized whenever it is necessary. 

A reduced word $p\in F$ is a \textbf{piece} with respect to a symmetrized set $\mc{R}$ if there exist $r_1,r_2\in\mc{R}$ (not necessarily distinct) so that 
\begin{enumerate}
	\item there exist $u,v\in F$ so that $r_1 = pu$ and $r_2=pv$,
	\item $u$ and $v$ have no common prefix.
\end{enumerate}
If $\mc{R}$ is not symmetrized, we say that $p$ is a piece with respect to $\mc{R}$ if it is a piece with respect to the smallest symmetrized $\mc{R}_0$ containing $\mc{R}$.

\begin{definition}\label{D: smcancel fp}
	Let $A_1, \dotsc, A_n$ be groups and let $\mc{R}$ be a finite subset of $F=A_1 * \cdots * A_n$. The quotient:
	\[\rightQ{F}{\llangle \mc{R} \rrangle}\]
	is a \textbf{$C'(\frac16)$-free product of the $A_i$ with respect to a finite set of relators $\mc{R}$} if for any piece $p$ and every $r\in \mc{R}$ such that $r= pu$ for some $u\in F$:
	\[|p|<\tfrac16|r|.\] 
	To avoid pathologies, we will also insist that $|r|\ge 6$. 
\end{definition}

We now shift our attention to $C'(\frac16)$--small cancellation free products where the vertex groups are each relatively cubulated.
As mentioned in the introduction, proofs for general small cancellation free products can often be deduced from the case of two vertex groups  and a single relation.  
To this end, we use the following notation.  
\begin{notation}
	Let $A$ and $B$ be relatively hyperbolic groups with peripherals $\mc{P}_A$ and $\mc{P}_B$ respectively.  
	
	Let $EA$ and $EB$ be CAT(0) cube complexes on which the groups $A$ and $B$ admit relatively geometric actions with respect to $\mc{P}_A$ and $\mc{P}_B$ respectively.

	From now on, $G$ is a $C'(\frac16)$ free product of groups $A,B$ with respect to a single relation $R$.
\end{notation}

We will review a construction from \cite{MartinSteenbock} for a complex of groups whose fundamental group is $G$ in \Cref{S: MS complexes}. The main  difference between the case with $2$ vertex groups and the case with $n$ vertex groups is the construction of the appropriate complex of groups, which we will briefly address in the next section.  

\section{A complex of groups for $G$ and the construction of its development}\label{S: MS complexes}

We will make use of a developable complex of groups structure for small cancellation free products due to Martin and Steenbock \cite{MartinSteenbock}.  The purpose of describing this construction is to build an action of $G$ on a 2-dimensional polygonal complex $X$ that captures the relative hyperbolicity of $G$ with respect to the peripheral structure $\mc{Q}$ consisting of factor groups.  
The complex $X$ is also the starting point for building a blown-up space that captures the finer relatively hyperbolic structure $(G, \mc{P})$ coming from peripherals within the factor groups.  

Throughout this section, we illustrate the construction when $G$ is a small cancellation free product of two groups with a single relator for clarity and ease of notation. 
Specifically, we assume that $G=\rightQ{A*B}{ \llangle w^d \rrangle }$ where $w$ is a cyclically reduced word of length $2N$ in $A*B$ that is not a proper power and $d \geq 0$.

\subsection{The developed complex $X$, via complex of groups structure for $G$}
\label{S: complex}

Fundamental groups of developable complexes of groups naturally act on connected and simply connected cell complexes \cite[Theorem~III.$\mc{C}$.3.13]{BridsonHaefliger}.  
As with graphs of groups, developable complexes of groups structure can be understood as decompositions of a group into subgroups that determine (up to conjugacy) stabilizers of cells in the universal cover of the complex.  


Let $L$ be a connected complex consisting of two vertices labeled $u_A$ and $u_B$ with a single edge joining them, and let $L'$ be its barycentric subdivision.

\begin{definition} \label{keeping vertices and edges straight} 
	Let $R_{0,\simpl}$ be a complex constructed as follows:
	\begin{enumerate}
		\item Start with a $2N$-gon, label vertices consecutively (with respect to some orientation on the boundary with labels taken modulo $2N$) $v_0,v_1,\ldots,v_{2N-1}$. 
		\item Add an \textbf{apex vertex} to the center of the $2N$-gon and join the apex vertex to each $v_i$ by adding an edge $e_i$ to obtain a cone over a loop on $2N$ edges. Let $\sigma_i$ denote the triangle that has both $e_{i-1},e_i$ as edges.
	\end{enumerate}
\end{definition}

Construct a new complex $K_\simpl$ by gluing the vertices of $R_{0,\simpl}$  with even subscripts to $u_A$, those vertices with odd subscripts to $u_B$ and gluing the edge between $v_i$ and $v_{i+1}$ to the edge joining $u_A$ to $u_B$ as shown in \Cref{fig:complex}.  We emphasize that $K_\simpl$ is compact because it is a finite cell complex.

\begin{definition}
	The \textbf{scwol} over $K_\simpl$ is a small category $\mc K$ whose objects are the cells of $K_\simpl$. 
	There is an arrow between objects from $\sigma_2$ to $\sigma_1$ whenever $\sigma_1$ is a face of $\sigma_2$ (reverse inclusion). 
\end{definition}
See \cite[Section III.$\mc{C}$.1]{BridsonHaefliger} for a general treatment of scwols. 

\begin{figure}[h]
	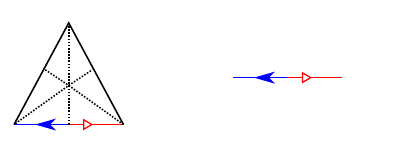
	\caption{The building blocks of $G(\mc{K})$. The underlying complex of $G(\mc{K})$ is built from the disjoint union of the two edge graph $L'$ on the right and a polygon consisting of $2N$ copies of the triangle on the left glued together so that the apex is at the center of the polygon. The polygon and $L'$ are glued together as shown.}
	\label{fig:complex}
\end{figure}

\begin{definition}
	A \textbf{complex of groups} $G(\mc{K})$ over a scwol $\mc{K}$ is given by the following data:
	\begin{enumerate}
		\item for each object $\sigma$ of $\mc{K}$, a group $G_\sigma$ called the \textbf{local group},
		\item for each arrow $f$, an injective homomorphism $\psi_f:G_{i(f)}\to G_{t(f)}$,
		\item if $t(f_2) = i(f_1)$, there is an element $g_{f_1,f_2}\in G_{t(f_1)}$ so that:
		\[\text{Ad}_{g_{f_1,f_2}} \psi_{f_1f_2} = \psi_{f_1} \psi_{f_2}\]
		where $\text{Ad}_g$ is conjugation by $g$. The element $g_{f_1,f_2}$ is called a \textbf{twisting element}.	
	\end{enumerate}
	Additionally, the data must satisfy the \textbf{cocycle condition}: for any triple $f_1,f_2,f_3$ such that $f_1f_2f_3$ is a valid composition of arrows in $\mc{K}$:
	\[\psi_{f_1}(g_{f_2,f_3})g_{f_1,f_2f_3} = g_{f_1,f_2}g_{f_1f_2,f_3}\]
\end{definition}
For much more detail about complexes of groups, see \cite[Chapter III.$\mc{C}$]{BridsonHaefliger}.

Let $\mc{K}$ be the scwol over $K_{\simpl}$. 
Martin and Steenbock construct a complex of groups $G(\mc{K})$ over $\mc{K}$ where the local groups are trivial except for:
\begin{itemize}
	\item The local group at $u_A$ is $A$,
	\item the local group at $u_B$ is $B$,
	\item and the local group at the apex vertex is $\Z/d\Z$ (recall $d$ is defined so that the relator is $w^d$).
\end{itemize}
All local maps are trivial. We define the twisting elements according to a more complicated scheme: first, Martin and Steenbock assign a group element $h_f$ to each arrow $f$ of the scwol $\mc{K}$. To write down these elements, we use some additional notation: if the small cancellation relator we are using is $w^d$ where $w$ is written as: 
\begin{equation}
	w=a_0b_0a_1b_1\ldots,a_nb_n \qquad a_i\in A,\,b_i\in B, \label{relator spelling}\end{equation} then let $w_i$ be the first $i$ letters of $w$ as written in \eqref{relator spelling}. If $\sigma,\sigma'$ are distinct simplices of $G(\mc{K})$ (which correspond to vertices of $\mc{K}$), and $\sigma\subseteq \sigma'$, then $(\sigma,\sigma')$ will be used to denote the arrow in $\mc{K}$ from $\sigma'$ to $\sigma$. Also recall the notations $v_i,e_i,\sigma_i$ from \Cref{keeping vertices and edges straight}. 
We assign the $h_f$ as follows:
\begin{itemize}
	\item for each $1\le i \le 2N$, $h_{(v_{i-1},e_{i-1})}= w_{i-1}\inv$,
	\item for each $1\le i \le 2N$, $h_{(v_{i-1},\sigma_i)} = w_{i-1}\inv$,
	\item for each $1\le i\le 2N$, $h_{(L, \sigma_i)} = w_i\inv$, 
	\item for each $1\le i\le 2N$, $h_{(v_i,\sigma_i)} = w_i\inv$,
	\item $h_{e_{2N},\sigma_{2N}} = w_{2N}\inv$,
	\item and $h_f$ is trivial for all other arrows $f$.
\end{itemize}
For any $e,f$ a pair of composable arrows in $\mc{K}$, the twisting element $g_{f,e}$ is defined as $g_{f,e} = h_{f}h_{e} h_{fe}\inv$.

Much like a graph of groups, the complex of groups structure of $G(\mc{K})$ carries both algebraic and geometric information about the group $G$.

\begin{theorem}[{\cite[Propositions 2.5, 2.8, 2.10]{MartinSteenbock}}]
	\label{UCover of cmplx X}
	There exists a simply connected $C'(\frac16)$ polygonal complex $X$ with an action of $G$ so that the quotient of the action $\leftQ{X}{G}$ is isomorphic as a complex of groups to $G(\mc{K})$. 
	Also, the fundamental group (in the category of complexes of groups) of $G(\mc{K})$ is isomorphic to $G$. 
\end{theorem}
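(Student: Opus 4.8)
The plan is to derive this statement from the Bridson--Haefliger theory of complexes of groups \cite[Chapter~III.$\mc{C}$]{BridsonHaefliger} in three moves: first compute the fundamental group of $G(\mc{K})$, then show $G(\mc{K})$ is developable, and finally take its universal cover as $X$.

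\textbf{Computing $\pi_1(G(\mc{K}))$.} I would use the presentation of the fundamental group of a complex of groups relative to a spanning tree $T$ of the $1$--skeleton of the scwol $\mc{K}$: the generators are the elements of all the local groups together with a symbol $k_f$ for each arrow $f$, and the relations are the internal relations of each local group, the relations expressing that conjugation by $k_f$ intertwines the monomorphism $\psi_f$, the twisting relations $k_f k_e = g_{f,e}\,k_{fe}$ for each composable pair, and $k_f = 1$ for $f \in T$ (with $k_{1_\sigma}=1$). Since the twisting elements were defined by $g_{f,e}=h_f h_e h_{fe}\inv$, the assignment $k_f\mapsto h_f$ is consistent with the twisting relations and simultaneously verifies the cocycle condition for $G(\mc{K})$; choosing $T$ to contain the arrows coming from $L'$ together with all arrows on which $h$ is trivial, the only surviving generators are the local groups $A$ at $u_A$, $B$ at $u_B$ and $\Z/d\Z=\cyc{t}$ at the apex, and the relations obtained by traversing the cone arrows $(v_{i-1},\sigma_i)$, $(v_i,\sigma_i)$, $(L,\sigma_i)$, $(e_{2N},\sigma_{2N})$ around the $2N$--gon — using $h_{(v_i,\sigma_i)}=w_i\inv$, $h_{(L,\sigma_i)}=w_i\inv$ and $h_{e_{2N},\sigma_{2N}}=w_{2N}\inv=w\inv$ together with the description \eqref{relator spelling} of $w$ — spell out a single relation identifying the image of $w$ with a generator of the $\Z/d\Z$ at the apex. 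Eliminating $t$ leaves $\pi_1(G(\mc{K}))\cong\rightQ{A*B}{\llangle w^d \rrangle}=G$.

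\textbf{Developability and the development.} I would verify that $G(\mc{K})$ is non-positively curved in the sense of \cite[III.$\mc{C}$.4]{BridsonHaefliger}, which suffices for developability \cite[Theorem~III.$\mc{C}$.4.17]{BridsonHaefliger}. Equip each $2N$--gon with the metric of a regular Euclidean polygon; then the local development at each object of $\mc{K}$ is a cone on a graph assembled from translates of the polygon corners by the relevant local group, and one must check that this graph has girth at least $2\pi$. This is exactly where the small cancellation hypothesis enters: the condition that every piece for the symmetrized closure of $\{w^d\}$ has length $<\frac16|w^d|$, with $|w^d|\ge 6$, translates through the standard small cancellation dictionary (as in \cite{LyndonSchupp}) into the required girth bound on vertex links. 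Hence $G(\mc{K})$ is non-positively curved, so developable, and as a consistency check the necessary injectivity of the local groups into $G$ holds directly (the factors of a $C'(\frac16)$ free product embed, and $w$, not being a proper power, has infinite order in $A*B$ with image of order exactly $d$ in $G$ by Greendlinger's lemma). Its universal cover $X$ then carries an action of $G$ whose quotient scwol is $\mc{K}$ and whose induced complex of groups is $G(\mc{K})$, so $\leftQ{X}{G}$ is identified with $G(\mc{K})$; and since $\mc{K}$ is simply connected, $X$ is connected and simply connected by \cite[Theorem~III.$\mc{C}$.3.13]{BridsonHaefliger}. It remains to realize $X$ as a polygonal complex and record that it is $C'(\frac16)$: its $2$--cells are the $G$--translates of the relator polygon (amalgamating the $d$ copies of the $2N$--gon around each apex), its edges are labelled by syllables of normal forms in $A*B$, and a subpath along which the boundaries of two distinct $2$--cells agree projects to a piece for the symmetrized closure of $\{w^d\}$; thus the $C'(\frac16)$ hypothesis on $\mc{R}$ is precisely the $C'(\frac16)$ condition on $X$.

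\textbf{Main obstacle.} The delicate bookkeeping is in the first step: selecting the spanning tree and tracking the elements $h_f$ so that the cycle around the $2N$--gon, interacting with the $\Z/d\Z$ at the apex, produces exactly the relation $w^d=1$ and not a spurious proper power of $w$. Conceptually, the heart of the argument is the passage between the combinatorial $C'(\frac16)$ condition on $\mc{R}$ and the geometric non-positive curvature of the polygonal complex $X$; both are classical or already carried out in \cite{MartinSteenbock}, so the task here is organization rather than new ideas.
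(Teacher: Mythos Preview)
The paper does not prove this theorem at all; it simply records it as a citation of \cite[Propositions~2.5, 2.8, 2.10]{MartinSteenbock} and treats the existence of the developed complex $X$ as a black box for the rest of the argument. So there is no ``paper's own proof'' to compare against --- your proposal is a reconstruction of the cited Martin--Steenbock argument from first principles via Bridson--Haefliger.

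As a reconstruction, your outline is the natural one and is essentially how Martin--Steenbock proceed: compute $\pi_1(G(\mc{K}))$ from the standard presentation, establish developability, and read off $X$ as the universal cover. Two small cautions. First, equipping the $2N$--gon with the metric of a regular \emph{Euclidean} polygon will not in general yield non-positive curvature at the boundary vertices; the link condition there involves how many polygon corners meet at translates of $u_A$ and $u_B$, and $C'(\frac16)$ controls this combinatorially rather than metrically. Martin--Steenbock do not metrize; they verify developability directly by exhibiting a morphism from $G(\mc{K})$ to $G$ that is injective on local groups (which, as you note, follows from embedding of factors and the order of the image of $w$). Your ``consistency check'' is really the cleanest route to developability here, not a side remark. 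Second, in the polygonal realization of $X$ the $2$--cells are $2Nd$--gons (the $\Z/d\Z$ at the apex unwinds the cone), so the $C'(\frac16)$ verification should be phrased for the relator $w^d$ rather than $w$; you gesture at this (``amalgamating the $d$ copies'') but it is worth being explicit.
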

In other words, $G(\mc{K})$ is a developable complex of groups with $X$ as its universal cover (as a complex of groups).
Since $G(\mc{K})$ is a finite complex of groups, the action of $G$ on $X$ is cocompact. 
We will refer to $X$ as the \textbf{developed complex} associated to the complex of groups structure $G(\mc{K})$.
Our main concern is that $X$ is polygonal and $G$ admits a cocompact action on $X$, so we will not discuss the fundamental group of a complex of groups further. 
The interested reader should refer to \cite[Section III.$\mc{C}$.3]{BridsonHaefliger}.
\begin{remark}
	\label{Rmk:MultipleFactors}
	The construction described in this subsection directly generalizes to all $C'(\frac16)$--small cancellation free products with $n \geq 2$ factors and finitely many relations with the following adjustments.
	The complex $L'$ will be a star where the local groups of valence 1 vertices are each of the factor groups and all other local groups are trivial.  
	Distinct relations will give distinct labelled polygons $R_{0, \simpl}$ whose boundaries are glued to $L'$ to build a complex $K_\simpl$ that gives rise to a complex of groups over the scwol over $K_\simpl$ as above.
\end{remark}

\subsection{The relatively hyperbolic structure for $G$ with respect to the vertex groups}
\label{S: relhyp}

A priori quotients of free products need not inherit relative hyperbolicity from the vertex groups.  For example, the direct product of two finitely generated infinite groups $A \times B$ arises as a quotient of the free product $A * B$ by finitely many words.  It is thus, essential to our arguments that we restrict our attention to quotients satisfying the $C'(\frac{1}{6})$ small cancellation in \Cref{SS:smallCancel}.   


An important aspect of working with the complex $X$ is that its large-scale geometry is closely related to the relative hyperbolicity of $G$.   The following result is originally due to Pankrat'ev \cite{Pankratev} and also proved by Steenbock \cite[Theorem~1.1]{SteenbockRS}:
\begin{proposition}\label{P: hyp rel vertex}
	Let $\mc{Q} = \{A,B\}$. Then $(G,\mc{Q})$ is a relatively hyperbolic pair.
\end{proposition}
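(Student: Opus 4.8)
The plan is to verify the fine hyperbolic graph definition of relative hyperbolicity (\Cref{D: fine relhyp}) through its reformulation in terms of a linear relative isoperimetric inequality, which is the incarnation of relative hyperbolicity best suited to small cancellation arguments; the equivalence of these formulations for countable groups is recorded in \cite{Hruska2010}. Since $G = \rightQ{A*B}{\llangle w^d \rrangle}$, the pair $(G,\mc{Q})$ admits the finite relative presentation $\langle A \sqcup B \mid w^d \rangle$, with the single relator $w^d$ normally generating the kernel of $A*B \twoheadrightarrow G$. It therefore suffices to prove that the relative Dehn function of this presentation is linear: every word over $A \sqcup B$ of relative length $\ell$ representing the identity of $G$ should bound a van Kampen diagram over this relative presentation using at most $C\ell$ copies of $w^d$, for a uniform constant $C$.

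To establish the linear bound I would invoke classical small cancellation theory over free products. After passing to the symmetrized closure $\mc{R}_0$ of $\{w^d\}$ (which does not change $G$), the $C'(\tfrac16)$ hypothesis -- every piece is shorter than $\tfrac16$ of any relator containing it -- puts us in the range of Greendlinger's Lemma for free products \cite[Ch.~V]{LyndonSchupp}: any nonempty cyclically reduced word $W$ over $A*B$ representing the identity has a cyclic conjugate containing a subword $s$ that is also a subword of some $r \in \mc{R}_0$ with $|s| > \tfrac12|r|$. Replacing such a Greendlinger subword $s$ by the complementary word $r^{-1}s$ applies a single relator and strictly decreases a suitable length of $W$; iterating and bounding the number of steps linearly in the relative length of $W$ produces the required diagram. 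The subtlety peculiar to the free product setting, as opposed to the free group setting, is the bookkeeping required when an $A$-syllable and a $B$-syllable of $W$ are amalgamated after a Greendlinger subword is excised: one must work with relative syllable length and check that each step decreases it by a definite amount. This is routine once the free product normal form is kept in view.

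Geometrically this amounts to the statement that the developed complex $X$ of \Cref{UCover of cmplx X}, being simply connected and $C'(\tfrac16)$ with all relator polygons of perimeter at least $6$, satisfies a combinatorial linear isoperimetric inequality, via a combinatorial Gauss--Bonnet argument on reduced disk diagrams. Equivalently one may cone off the cosets of $A$ and $B$ in the Cayley graph of $G$ with respect to $A \sqcup B$ and check directly that the resulting graph $\widehat{\Gamma}$ is $\delta$-hyperbolic -- again from the linear relative isoperimetric inequality -- and fine: the $C'(\tfrac16)$ condition forces any two distinct relator $|w^d|$-gons in the Cayley complex to share a path of length less than $\tfrac16|w^d|$, so that, together with the area bound, only finitely many circuits of a given length pass through any fixed edge. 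Since $G$ acts cocompactly on $\widehat{\Gamma}$ (equivalently on $X^{(1)}$) with finite edge stabilizers, and the infinite vertex stabilizers are exactly the conjugates of $A$ and $B$, \Cref{D: fine relhyp} applies. The hard part is thus not conceptual but the careful transfer of Greendlinger's Lemma to free products and the verification of fineness; both are standard, and the conclusion matches \cite{Pankratev} and \cite[Theorem~1.1]{SteenbockRS}.
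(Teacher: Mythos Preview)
Your proposal is correct and follows essentially the same route as the paper: both invoke Greendlinger's Lemma for $C'(\tfrac16)$ free products from \cite[Ch.~V]{LyndonSchupp}, run Dehn's algorithm to obtain a linear relative isoperimetric function for the finite relative presentation $\langle A,B \mid R\rangle$, and then appeal to the equivalence of definitions in \cite{Hruska2010}. Your additional geometric sketch via fineness of the coned-off graph is a valid alternative but is not the argument the paper gives here (fineness of $X_{bal}^{(1)}$ is established separately, later, and by quoting \cite[Lemma~4.5]{BowditchRH} rather than by a direct piece-counting argument).
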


\begin{proof}
	Consider a finite relative presentation  $\cyc{A,B \, | \, R}$ for $G$ relative to $\mc{Q}$ (see \cite[Section 3.5]{Hruska2010} for details about finite relative presentations). 
	Let $w$ be a reduced word over the alphabet $A\cup B$ so that $w=_G 1_G$.
	Since $R$ satisfies the $C'(\frac16)$ condition, \cite[Chapter V, Theorem 9.3]{LyndonSchupp} implies that there exists a reduced factorization $w=usv$ and a reduced word $r$ over the alphabet $A\cup B$ that is a cyclic conjugate of some $r'\in R$, so that $r =st$ and $|s|>\frac12 |r|$. 
	By applying Dehn's algorithm (see e.g. \cite[III.$\Gamma$.2.4]{BridsonHaefliger}), we conclude that:
	\[w = r_1'r_2'\ldots r_k'\]
	where each $r_i'$ is a cyclic conjugate of some $r_i\in R$ and $k\le |w|$. 
	Hence there exists a linear relative isoperimetric function for the finite relative presentation $\cyc{A,B \, | \, R}$, so $(G,\mc{Q})$ satisfies \cite[Definition 3.7 (RH-6)]{Hruska2010}, one of the equivalent definitions given in \cite{Hruska2010} for a relatively hyperbolic pair. 
\end{proof}

A result of Charney and Crisp \cite[Theorem 5.1]{CharneyCrisp} asserts that any space admitting a cocompact action by a relatively hyperbolic group, in which the peripherals are maximal among subgroups that act with nontrivial fixed point, is equivariantly quasi-isometric to any coned--off Cayley graph.  
Recall that a \textbf{coned-off Cayley graph} for $(G, \mc{P})$ is obtained from any finite generated set $S$ of $G$ by attaching a cone over distinct cosets of each $P \in \mc{P}$ in $\Cay(G, S)$.
The cocompactness of the action on $G$  implies the following key geometric fact:

\begin{proposition}\label{CCrestatement}
	Let $\Gamma(G,\mc{Q})$ be a coned-off Cayley graph for $G$ with respect to some finite generating set. 
	The action of $G$ on any hyperbolic graph $\Gamma$ where
	\begin{enumerate}
		\item $G$ acts cocompactly by isometries,
		\item the infinite vertex stabilizers are maximal parabolics,
		\item the edge stabilizers are finite, and
		\item every maximal parabolic stabilizes a vertex
	\end{enumerate}
	induces a $G$-equivariant quasi-isometry $\Gamma(G,\mc{Q})\to \Gamma$. 
	
	In particular, $X^{(1)}$ is a Gromov hyperbolic graph.
\end{proposition}

Another crucial property of small cancellation free products of non-elementary relatively hyperbolic groups is that they posses several relatively hyperbolic structures.  Most importantly, these different structures are naturally witnessed by actions of the group on certain spaces associated to the decompositions as complexes of groups described in \Cref{S: complex} and \Cref{SS:blowupConstruction}.
An easy consequence of \Cref{P: hyp rel vertex} is the following:

\begin{proposition}\label{rh structure P}
	If $(A,\mc{P}_A)$ are $(B,\mc{P}_B)$ are relatively hyperbolic pairs and $\mc{P} = \mc{P}_A \cup \mc{P}_B$, then $(G,\mc{P})$ is a relatively hyperbolic pair.
\end{proposition}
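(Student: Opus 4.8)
The plan is to bootstrap from \Cref{P: hyp rel vertex}, which gives that $(G,\{A,B\})$ is relatively hyperbolic, by promoting each factor group to its own peripheral structure. The key principle is transitivity of relative hyperbolicity: if $(G,\mc{Q})$ is hyperbolic relative to $\mc{Q}=\{A,B\}$, and each $Q\in\mc{Q}$ is itself hyperbolic relative to a collection of subgroups, then $G$ is hyperbolic relative to the union of these collections. This is precisely the Drutu--Sapir / Dahmani-type result on refining peripheral structures (and is also exactly the situation handled by Yang in \cite{WenyuanYang2014}, which the introduction already flags as the relevant reference for comparing Bowditch boundaries with respect to the two structures). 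So the first step is to cite that result in the form: if $(G,\mc{Q})$ is a relatively hyperbolic pair and each $Q_i\in\mc{Q}$ is hyperbolic relative to $\mc{P}_{Q_i}$, then $(G,\bigcup_i\mc{P}_{Q_i})$ is a relatively hyperbolic pair.

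Next I would verify the hypotheses. By \Cref{P: hyp rel vertex}, $(G,\{A,B\})$ is relatively hyperbolic. By assumption, $(A,\mc{P}_A)$ and $(B,\mc{P}_B)$ are relatively hyperbolic pairs. Since $\mc{P}=\mc{P}_A\cup\mc{P}_B$ is exactly the union of the peripheral collections of the factors, the transitivity statement applies verbatim and yields that $(G,\mc{P})$ is a relatively hyperbolic pair. One small point of bookkeeping: the peripheral subgroups in the refined structure are conjugates in $G$ of subgroups of the $A_i$; one should note that because $A$ and $B$ embed in $G$ (a consequence of the $C'(\tfrac16)$ condition via \cite{Pankratev}, recorded in the introduction), these subgroups genuinely sit inside $G$ as stated, and the collection $\mc{P}$ is still finite up to conjugacy because $\mc{P}_A$ and $\mc{P}_B$ are.

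The main obstacle is purely one of citation hygiene rather than mathematical content: one must invoke transitivity of relative hyperbolicity in a form whose hypotheses match the setup here (in particular it must allow the ``outer'' peripherals $A,B$ to be arbitrary relatively hyperbolic groups, not merely finite or hyperbolic). The cleanest route is to appeal to \cite{WenyuanYang2014}, which is tailored to exactly this comparison of peripheral structures for free products and their subgroups, so there is no need to reprove anything. Alternatively, one can give a self-contained argument by constructing the appropriate fine hyperbolic graph in the sense of \Cref{D: fine relhyp}: take the action of $G$ on a suitable graph built from a coned-off Cayley graph of $(G,\{A,B\})$ in which, inside each vertex of type $A$ (resp.\ $B$), one inserts a copy of a fine hyperbolic graph witnessing relative hyperbolicity of $(A,\mc{P}_A)$ (resp.\ $(B,\mc{P}_B)$); fineness and hyperbolicity of the result follow from the corresponding properties of the pieces together with the combinatorial structure of the free product, and the infinite vertex stabilizers are exactly the conjugates of elements of $\mc{P}$. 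Either way the argument is short, so I would present the citation-based version as the proof and perhaps remark on the alternative.
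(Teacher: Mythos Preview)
Your proposal is correct and matches the paper's approach. The paper does not actually write out a proof of this proposition; it simply declares the result to be ``an easy consequence of Proposition~\ref{P: hyp rel vertex}'' and moves on, so your identification of transitivity of relative hyperbolicity (refining the peripheral structure $\{A,B\}$ to $\mc{P}_A\cup\mc{P}_B$, as in Dru\c{t}u--Sapir or \cite{WenyuanYang2014}) is precisely the mechanism the paper is leaving implicit.
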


\begin{notation}
	\label{notation:multiple_factors}
	For the remainder of this paper, we will set $\mc{Q} =\{A,B\}$ so that $(G,\mc{Q})$ is the relatively hyperbolic structure of $G$ from \Cref{P: hyp rel vertex} where the peripheral subgroups are entire vertex groups with the understanding that everything generalizes to any finite collection of factor groups as in \Cref{Rmk:MultipleFactors}.
	
	We will use $(G,\mc{P})$ to denote the relatively hyperbolic structure with $\mc{P} = \mc{P}_A \cup \mc{P}_B$  coming from the peripherals of the vertex groups as in \Cref{rh structure P}. 
	Likewise, we will use $\partial_{\mc{Q}}G$ and $\partial_{\mc{P}}G$ to denote the respective Bowditch boundaries of $(G,\mc{Q})$ and $(G,\mc{P})$. 
\end{notation}

%

\subsection{Acylindricity of relative hyperbolic group actions}
\label{S: acylindricity}

Acylindrical actions on general hyperbolic spaces were defined by Bowditch \cite{BowditchTightGeodesics} extending prior notions due to Sela \cite{Sela97} and Delzant \cite{DelzantAcyl}.  
The class of \emph{acylindrically hyperbolic groups} is quite broad and natural.  
For example, relatively hyperbolic groups provide a rich source of acylindrically hyperbolic groups  (see \cite{OsinAH} and references therin for more on acylindrical hyperbolicity).  
As will be apparent in \Cref{P: hypergraph separation}, our methods only require controlling orbits of certain quasiconvex subsets of complex formed by subdividing $X$.  
We will make use of the following formulation of acylindrical actions introduced by Abbott and Manning \cite[Definition~3.1]{AbbottManning}.

\begin{definition}
	\label{def:AQI-cobddSubset}
	Let $K$ act by isometries on a hyperbolic metric space $Y$, and let $H$ be a subgroup that is quasi-isometrically embedded by the action.  The action of $K$ on $Y$ is \textbf{acylindrical along} $H$ if for some (equivalently any) $H$--cobounded subspace $W \subset Y$ the following condition holds: 
	
	For any $\varepsilon \geq 0$, there exists constants $D = D(\varepsilon, W)$	and $N = N(\varepsilon, W)$ such that any collection of distinct cosets $ \{ g_1 H, \dotsc, g_k H \} $ such that 
	\[
	\# 		\left\{ 	\,		n 		\, 		: 		\, 		\diam \left( \bigcap_{i = 1}^k {\mc N}_{\varepsilon} (g_i W) \right) > D  		\,	\right\}		\leq N.
	\]
	Groups $K$ that admit isometric actions on non-elementary hyperbolic spaces that are acylindrical along the entire group are called \textbf{acylindrically hyperbolic}.
\end{definition}

In \cite[Theorem 3.2]{AbbottManning}, Abbott and Manning  show that the notion of acylindricity in \Cref{def:AQI-cobddSubset} agrees with the notion defined by Bowditch.  

In the setting of 
$C'(\frac16)$--small cancellation free products, acylindricity of the $G$ action on $X$ will be established in       
\Cref{P: acylindrical}.



\subsection{Small cancellation blow-up}
\label{SS:blowupConstruction}

The geometry of the factor groups is not reflected in the developed complex $X$. 
We will see how to enlarge $X$ to a new complex $\EG$ that captures the cubical geometry of the factor groups using a construction of Martin and Steenbock \cite[Definition~2.12]{MartinSteenbock} (see also prior work of Martin \cite[Definition~2.2]{Martin:NPC_boundary}).

Let $EA$ and $EB$ denote $\CAT(0)$ cube complexes on which $A$ and $B$ respectively act cocompactly.  
Originally, Martin and Steenbock require that the actions on $EA$ and on $EB$ are also proper in order to conclude that the resulting blown-up space $\EG$ admits a geometric action by $G$.
Note that the proof of \cite[Proposition~2.14]{MartinSteenbock} only uses properness of these actions to show that the action of $G$ on $\EG$ is again proper.  
In particular, when $A$ and $B$ only admit relatively geometric actions on $EA$ and $EB$ respectively, the action of $G$ on $\EG$ is still cocompact, which we record below in \Cref{EG cocompactness}. 

\begin{figure}
\centering 
%
%
%
%
%
%
%
%
%

\begin{tikzpicture}[
	scale=0.1,
	ultra thick
	]
	
	\def\rad{1}
	
	\draw[blue] (17,3) -- (13, 9); 
	\draw[red] (9, 13) -- (3, 17);
	\draw[blue] (-3, 17) -- (-33, 13) ;
	\draw[red] (-37, 9) -- (-41, 3); 
	\draw[blue] (-41, -3) -- (-39, -9); 
	\draw[red] (-35, -15) -- (-3, -17); 
	\draw[blue] (3, -17) -- (9, -13);
	\draw[red] (13, -9) -- (17, -3);
	
	\draw (13, 9) -- (9, 13);
	\draw (3, 17) -- (-3, 17);
	\draw (-33, 13) -- (-37, 9);
	\draw (-41, 3) -- (-41, -3);
	\draw (-39, -9) -- (-35, -15);
	\draw (-3, -17) -- (3, -17);
	\draw (9, -13) -- (13, -9);
	\draw (17, -3) -- (17,3);
	
	\fill[blue] (17,3) circle (\rad); 
	\fill[blue] (13, 9) circle (\rad);
	\fill[red] (9, 13) circle (\rad);
	\fill[red] (3, 17) circle (\rad);
	\fill[blue] (-3, 17) circle (\rad);
	\foreach \t/\T in {0.66/0.34, 0.34/0.66}{
		\fill[blue] (-3*\t-33*\T, 17*\t + 13*\T ) circle (\rad*0.75);
	}
	\fill[blue] (-33, 13) circle (\rad); 
	\fill[red] (-37, 9) circle (\rad);
	\fill[red] (-41, 3) circle (\rad);
	\fill[blue] (-41, -3) circle (\rad);
	\fill[blue] (-39, -9) circle (\rad);
	\fill[red] (-35, -15) circle (\rad);
	\foreach \t/\T in {0.66/0.34, 0.34/0.66}{
		\fill[red] (-35*\t-3*\T, -15*\t - 17*\T ) circle (\rad*0.75);
	}
	\fill[red] (-3, -17) circle (\rad);
	\fill[blue] (3, -17) circle (\rad);
	\fill[blue] (9, -13) circle (\rad);
	\fill[red] (13, -9) circle (\rad);
	\fill[red] (17, -3) circle (\rad);

	\def\xoffset{50}
	
	\draw[->] (25, 0) --(\xoffset - 15, 0);

	\draw (7+\xoffset,3) -- (3+\xoffset, 7) -- (-3+\xoffset, 7) -- (-7+\xoffset, 3) -- (-7 +\xoffset, -3) -- (-3 + \xoffset, -7) -- (3+\xoffset, -7) -- (7+\xoffset, -3) -- cycle;
	
	\foreach \x/\y in {7/3, 3/7, -3/7, -7/3, -7/-3, -3/-7, 3/-7, 7/-3}{
		\fill[blue] (\x+\xoffset, \y) circle (\rad);
	}
	
	\foreach \x/\y in {3/7, -7/3, -3/-7, 7/-3}{
		\fill[red] (\x+\xoffset, \y) circle (\rad);
	}
	
	%
	%
	%
	
	
	
	
\end{tikzpicture}
\caption{A blown-up polygon in $\EG$ and its image in the polygonal complex $X$}
\label{F:polygon-blowup}
\end{figure}

We summarize key aspects of this construction to establish notation and  for completeness.
The reader should think of $\EG$ as replacing vertices of $X$ with copies of either $EA$ or $EB$ in accordance with the local group.  
In doing this, verticies of polygons are blown up to geodesic segments within the vertex space as in \Cref{F:polygon-blowup}.  
For precise details about the attaching maps that ensure $G$ still acts isometrically on this new space see \cite[Definitions~2.16-2.18]{MartinSteenbock}.  
For our purposes, it is enough to know the existence of a projection $p: \EG \to X$.
The geometry of $\EG$ is largely determined by the fibers over the vertices of $X$. These fibers are:
\[
EG_x := p^{-1}(x) \cong  
\begin{cases}
EA	&	\text{when } G_x = A^g \text{ for some $g\in G$}	\\
EB	&	\text{when } G_x = B^g	\text{ for some $g\in G$}\\
\text{a single point}	&	\text{otherwise.}
\end{cases}
\]
The interior of each edge and polygon of $X$ lifts uniquely to an edge or polygon of $\EG$. 
We will refer to $EG_x$ as a \textbf{fiber complex} and $G_x$, its stabilizer, as a \textbf{fiber group}. 	
We call the complex $\EG$ the \textbf{blow-up} of the developed complex $X$ with respect to the actions of $A$ and $B$ on $EA$ and $EB$.

Here is a very simple example of this construction:
\begin{example}\label{simple example}
Let $A = \Z,$ let $B = \Z$ and let $G= A*B$. Let $EA$ and $EB$ be the standard Cayley graphs for the presentations $\Z\cong\cyc{a_0}$ and $\Z\cong \cyc{b_0}$ respectively. 
The complex of groups in this case is the standard graph of groups presentation for $G=F_2$ and its development is the standard Bass-Serre tree with distinguished vertices $v_A$ and $v_B$ stabilized by $A$ and $B$ respectively. Note that $G$ does not act properly on the Bass-Serre tree $T$. 
In this case, the blow-up $\EG$ will be a $3-$valent tree on which $G$ acts properly. 

The space $\EG$ is constructed is as follows: pick distinguished vertices $x_A$ and $x_B$ in $EA$ and $EB$. For each vertex $v$ of $T$, let $\EG_v$ be a copy of either $EA$ or $EB$ depending on whether $v$ is in the orbit of the vertex of $T$ stabilized by $A$ or in the orbit of the vertex stabilized by $B$. Let $x_v$ be the copy of $x_A$ or $x_B$ as appropriate. 
We now glue together these spaces as follows: for each $a\in A$, there is an edge in $T$ from $v_A$ to $a \cdot v_B$ we connect the vertex $a\cdot x_A\in EA$ to the vertex $x_{a\cdot v_B}\in EG_{a\cdot v_B}$ by an edge. 
Likewise, for each $b\in B$, connect the vertex $b\cdot x_B\in EB$ to $x_{b\cdot x_A}\in EG_{b\cdot x_A}$ (except there should only be one edge between $x_A$ and $x_B$). 
We can then extend these choices equivariantly to obtain a $3-$valent tree. 
\end{example}

\begin{figure}[h]
\centering
\includegraphics[trim=10 80 310 80, clip, scale = 0.4]{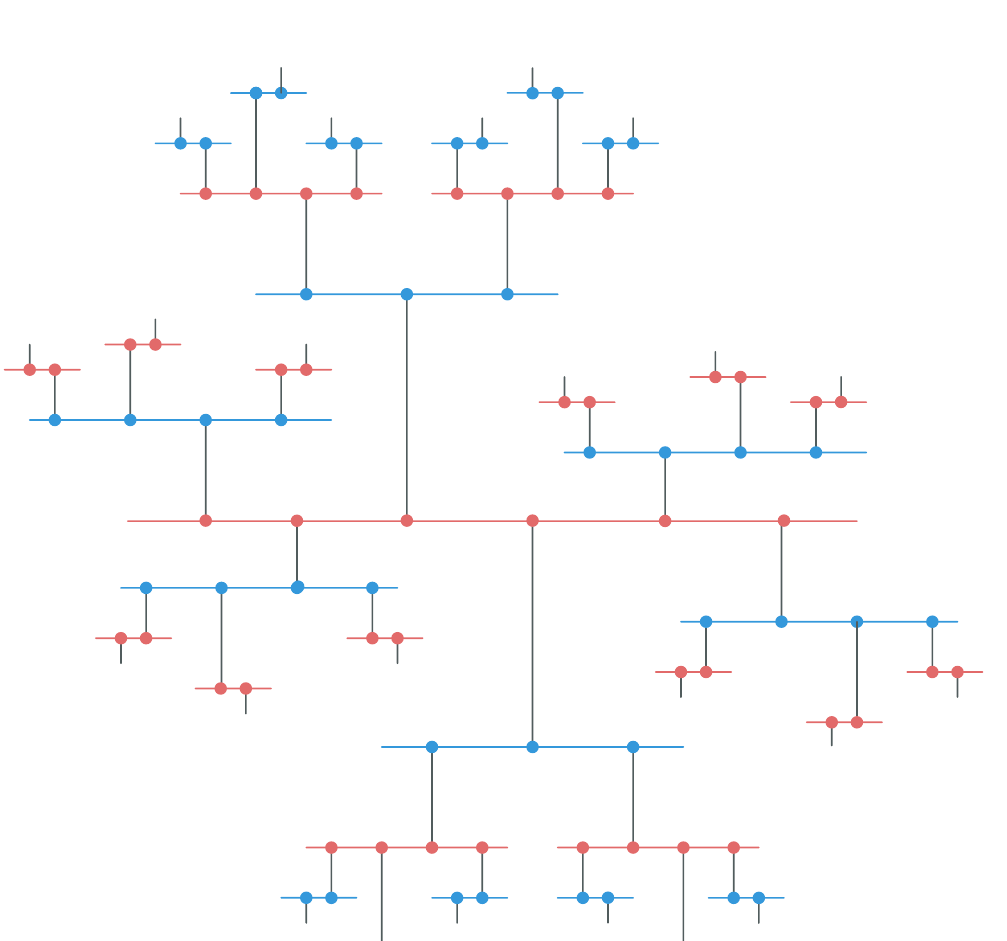}
\qquad \raisebox{2.75cm}{$\xrightarrow{\hspace{2cm}}$} \qquad
\includegraphics[trim=520 80 60 80, clip, scale = 0.4]{tree_of_spaces-v2.pdf}
\caption{The blow-up described in \Cref{simple example} is the standard tree of spaces over the Bass-Serre tree}
\label{fig:enter-label}
\end{figure}

In general, we will only have a relatively geometric action on the fiber complexes, so the resulting action on $\EG$ will not be proper.

Using parts of \cite[Theorem 2.4]{Martin:NPC_boundary} that do not require properness, we obtain the following:
\begin{proposition}\label{EG cocompactness}
The action of $G$ on $\EG$ is cocompact, and $\EG$ is simply connected.
\end{proposition}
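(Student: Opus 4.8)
The plan is to deduce both assertions directly from the structure of the blow-up construction together with \cite[Theorem 2.4]{Martin:NPC_boundary}, after checking that the portions of Martin's argument establishing cocompactness and simple connectivity never invoke properness of the fiber actions.

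For cocompactness: recall from \Cref{UCover of cmplx X} that $G$ acts cocompactly on the developed complex $X$ because $G(\mc{K})$ is a finite complex of groups. The projection $p \colon \EG \to X$ lifts the interior of each edge and polygon of $X$ uniquely, while the fiber over a vertex $v$ is a translate of $EA$, of $EB$, or a point, according to $G_v$. Since the actions of $A$ on $EA$ and of $B$ on $EB$ are relatively geometric, they are cocompact by \Cref{D: relgeom}, so each fiber complex has finitely many $G_v$-orbits of cells. Combining this with the finiteness of the $G$-orbits of cells of $X$, one sees that $\EG$ has finitely many $G$-orbits of cells, hence the action is cocompact. Properness of the fiber actions is not needed here: as remarked in the excerpt, it enters \cite[Proposition~2.14]{MartinSteenbock} (and the corresponding part of \cite{Martin:NPC_boundary}) only to upgrade cocompactness to a proper action.

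For simple connectivity: the developed complex $X$ is simply connected by \Cref{UCover of cmplx X}, and each nontrivial fiber complex is a $\CAT(0)$ cube complex, hence contractible, while the trivial fibers are points. The complex $\EG$ is assembled by replacing each vertex of $X$ with its fiber complex and attaching the uniquely lifted edges and polygons of $X$ along geodesic arcs inside these fibers, following \cite[Definitions~2.16--2.18]{MartinSteenbock}. Since the fibers and the attaching arcs are all contractible, collapsing each fiber complex to a point is a homotopy equivalence $\EG \to X$ (this is precisely the content of the relevant portion of \cite[Theorem~2.4]{Martin:NPC_boundary}, which can also be seen by a van Kampen argument over the polygons of $X$), so $\pi_1(\EG) \cong \pi_1(X) = 1$. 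The main, and essentially only, point to verify is the bookkeeping claim that Martin's proof of \cite[Theorem~2.4]{Martin:NPC_boundary} separates cleanly, with the cocompactness and simple connectivity conclusions depending only on the combinatorics of the complex of spaces (contractible fibers, simply connected finite-type base) and properness of the local actions used solely to conclude properness of the ambient action. I expect this last verification — re-reading Martin's argument to isolate where the properness hypothesis is actually consumed — to be the only real (if minor) obstacle.
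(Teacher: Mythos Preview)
Your proposal is correct and follows essentially the same approach as the paper: both defer to \cite[Theorem~2.4]{Martin:NPC_boundary} and observe that the properness hypothesis there is used only to obtain properness of the $G$--action on $\EG$, not for cocompactness or contractibility. If anything, you supply more justification than the paper does, since the paper simply records the proposition with the remark that the relevant parts of Martin's argument do not invoke properness.
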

Note that the properness hypotheses of \cite[Theorem 2.4]{Martin:NPC_boundary} are not used in the cocompactness and contractibility arguments.

\section{The Construction of the Walls}
\label{S: wall construction}

In this section, we review some of the objects Martin and Steenbock construct to help build their wallspace structure on $\EG_{bal}$, a subdivision of $\EG$ that is defined in  \Cref{WallsFromX}. 

We formalize \textbf{hyperstructures}, which are used implicitly in \cite{MartinSteenbock} (see \Cref{D:hyperstructure}). Hyperstructures are inspired by hyperplanes in cube complexes. Similar ideas have been used to construct wallspaces for cubulation, see for example \cite{WiseSmallCancellation}. Like hyperplanes, they have hypercarriers and dual edges. We will also encounter \textbf{hypergraphs} in $X_{bal},$ a subdivision of $X$ that makes the projection $\EG_{bal}\to X_{bal}$ a combinatorial map. Hypergraphs are either hyperstructures of $X_{bal}$ or projections of hyperstructures of $\EG_{bal}$ to $X_{bal}$. 
The notation in this section is based on \cite{MartinSteenbock} to help the reader refer between our work and theirs. Superscripts are intended to indicate which type of hyperstructure an object arises from, and subscripts indicate a dual edge that determines the hyperstructure an object arises from. 

\subsection{$\Omega$--hyperstructures}	\label{S: hyperstructures}

Let $\Omega$ be a complex whose cells are polygons with an even number of edges. We can place an equivalence relation $\sim_{opp}$ on the edges of $\Omega$ as follows: for all edges $e$, $e\sim e$ and if $U$ is a polygon in $\Omega$ and $e_1,e_2$ are diametrically opposite edges of the polygon $U$, then $e_1\sim_{opp} e_2$. 
Then $\sim_{opp}$ can be extended to an equivalence relation on $\Omega$ by taking the transitive closure. 

\begin{example}
If $\Omega$ is a square complex, $e_1\sim_{opp} e_2$ if and only if they are dual to the same hyperplane. 
\end{example}

More generally, if $\Omega$ is a complex where every cell is either a polygon with an even number of edges or an $n$-cube, we can place a similar relation on $\Omega$: let $U$ be a polygon or $n$-cube of $\Omega$ where $n\ge 2$. If $e_1,e_2$ are edges in $U$, we say $e_1\sim_{opp} e_2$ when:
\begin{enumerate}
\item if $e_1=e_2$, then $e_1\sim_{opp}e_2$
\item if $U$ is a polygon, then $e_1\sim_{opp} e_2$ when they are diametrically opposed in $P$, and
\item if $U$ is an $n\ge 2$ cube, then $e_1\sim_{opp} e_2$ when they are dual to the same midcube. 
\end{enumerate} 
As before, $\sim_{opp}$ extends to an equivalence relation by taking the transitive closure. 

\begin{definition}
\label{D:hyperstructure}
Let $e$ be an edge in $\Omega$. The \textbf{$\Omega-$hyperstructure} associated to $e$, $W_e^\Omega$, is the subspace of $\Omega$ constructed as follows: for each pair $e_1,e_2\in [e]_{\sim_{opp}}$ where $e_1,e_2$ are edges of an $(n \geq 2)$--cell $U$, 
\begin{enumerate}
	\item when $U$ is a polygon, add straight segments joining the center of $U$ to the midpoints of $e_1$ and $e_2$ or
	\item when $e_1$ and $e_2$ are dual to a midcube of an $n$-cube $U$ in which case, include the midcube of $U$ that is dual to $e_1$ and $e_2$ in $W_e^\Omega$.
\end{enumerate} 
\end{definition}
The \textbf{carrier} of the hyperstructure $W_e^\Omega$ denoted $Y_e^\Omega$ is the union of all cells whose interior intersects $W_e^\Omega$.  

For example, if $\Omega$ is any cube complex, the $\Omega-$hyperstructure associated to an edge $e$ is the hyperplane dual to $e$. 

The \textbf{abstract carrier of $W_e^\Omega$}, is the complex $\tilde{Y}_e^\Omega$ constructed as follows:
\begin{enumerate}
\item For each pair of diametrically opposed edges of a polygon $U$ in $[e]_{\sim_{opp}}$ include a copy of $U$ with the edges $e_1,e_2$ labeled and the remaining edges unlabeled.
\item For each collection of edges in $[e]_{\sim_{opp}}$ dual to the same midcube $M$ of a maximal cube $C$, include a copy of $C$ with copies of the edges dual to $M$ labeled and the remaining edges unlabeled.

\item If $F,F'$ are cells of $Y_{e}^\Omega$ with labeled edges $e,e'$ respectively so that $e,e'$ have the same image $e_{F,F'}\in [e]_{opp}$, let $E,E'$ be the maximal subcomplexes of $F,F'$ respectively whose images in $\Omega$ agree and contain $e_{F,F'}$. 
Then glue $F,F'$ in $Y_e^\Omega$ by identifying points in $E,E'$ that have the same image in $\Omega$. \label{D:OverlapCarrierPolys}
\end{enumerate}

We remark that when polygons in $\Omega$ have connected intersections (for example as a consequence of a small cancellation condition like \Cref{D: smcancel fp}) then in \Cref{D:OverlapCarrierPolys} the paths $\rho$ and $\rho'$ can be chosen to be maximal paths whose images necessarily contain an edge $e_{U, U'} \in [e]_{\sim_{opp}}$. 
Note also that there is a natural surjective  immersion $\tilde{Y}_e^\Omega\to Y_e^\Omega$ from the abstract carrier to the carrier. 

\begin{example}
In \Cref{F: abstractcarrier}, we show a square complex with one of its hyperplanes and its abstract carrier. In this case, the abstract carrier immerses but does not embed. 
\begin{figure}
	\includegraphics[scale=.75]{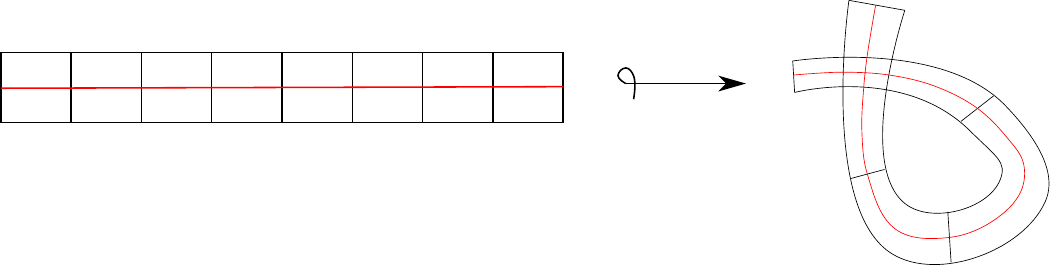}
	\caption{On the right is a square complex with a hyperplane drawn in red. On the left is its abstract carrier.}\label{F: abstractcarrier}
\end{figure}
\end{example}

The following lemma will be prove useful later:
\begin{lemma}\label{L: hyperstructure stab cocompact}
Let $\Omega$ be a complex where every cell is a polygon with an even number of cells or an $n$--cube for some $n$. Suppose that $K$ acts combinatorially and cocompactly on $\Omega$. If $W_e^\Omega$ is an $\Omega$--hyperstructure, then $\Stab_K(W_e^\Omega)$ acts cocompactly on $W_e^\Omega$. 
\end{lemma}

\begin{proof}
By construction hyperstructures are closed.  
Subsets of the quotient are closed if and only if their preimage is closed.  
Thus, the image of $W_e^\Omega$ is compact.  
Cocompactness follows because the the restriction of the quotient map to $W_e^\Omega$ is induced by the action of $\Stab_K(W_e^\Omega)$.
\end{proof}

\subsection{Hypergraphs coming from the subdivided complex $X_{bal}$}
\label{WallsFromX}

We need to subdivide $X$ and $\EG$ to obtain the appropriate walls.  Specifically, we need to ensure that hyperstructures cannot enter and exit a single polygon too close together.

\begin{definition}[{\cite[Definition 3.3]{MartinSteenbock}}]
\label{far apart}
Let $U$ be a polygon of a $C'(\frac16)$-polygonal complex $X$ and $\tau_1,\tau_2$ be two simplices of the boundary $\partial U$. We say that $\tau_1,\tau_2$ are \textbf{far apart} in $U$ if no path $\alpha$ in $\partial U$ containing both $\tau_1 $ and $\tau_2$ is a concatenation of strictly less than four pieces.
\end{definition}

Note that the notion of piece in \Cref{far apart} is \cite[Definition~2.9]{MartinSteenbock}, which defines a \emph{piece} of a polygonal complex as an injective path $\rho$ contained in the boundary of two polygons, whose boundaries cannot be identified homeomorphically while keeping  $\rho$ fixed pointwise.  

Recall from the construction of $\EG$ that the projection $p: \EG \to X$ is injective on edges whose interior is disjoint from every fiber spaces.  These edges are referred to as \textbf{horizontal edges} of the blow-up $\EG$.  Edges whose interiors are contained in a fiber space are called \textbf{vertical edges} (see also \cite[Definition~2.17]{MartinSteenbock})

We subdivide $\EG$ in two steps with respect to an integer $k \geq 0$.  
First, subdivide every vertical edge once and then cubically subdivide the fiber complexes to match the vertical subdivisions.  This guarantees that polygons are glued to fiber complexes along paths of even length.  
Second, subdivide each horizontal edge into $k$ edges.  
The projection $p: \EG\to X$ will remain a combinatorial projection when every edge of $X$ is subdivided into $k$ edges. 

The inequality in \Cref{D: smcancel fp} is strict, so for all $k$ sufficiently large the resulting subdivision $\EG_{bal}$ of $\EG$ and subdivision $X_{bal}$ of $X$ have the following two properties:
\begin{enumerate}
\item If $U$ is a polygon of $\EG_{bal}$ or $X_{bal}$, then $U$ has an even number of sides,
\item Under the natural projection $p:\EG_{bal}\to X_{bal}$, every pair of diametrically opposed edges of a polygon $U$ in $\EG_{bal}$ projects to a pair of edges that are far apart in $p(U)$ in $X_{bal}$ \cite[Lemma 3.24]{MartinSteenbock}. 
\end{enumerate}
The subdivisions $\EG_{bal}$ and $X_{bal}$ are called \textbf{balanced subdivisions}.

For each edge $e$ of $X_{bal}$ we define $Z_e^X$ to be the $X_{bal}$-hyperstructure associated to $e$ which we refer to as the \textbf{hypergraph of $X_{bal}$ associated to $e$}. The \textbf{carrier} or \textbf{hypercarrier} $Y_e^X$ of $Z_e^X$ is the smallest subcomplex of $X_{bal}$ that contains $Z_e^X$. 

Rather than constructing a unified wallspace structure in $\EG_{bal}$ like Martin and Steenbock do, we will construct a sufficient family of codimension-$1$ subgroups that satisfy the hypotheses of \Cref{t:Rel BW} from the hyperstructures of \emph{both} $X_{bal}$ and $\EG_{bal}$. 

\subsection{The hyperstructures of $\EG_{bal}$}
\label{WallsFromEG}

In \cite{GenFine}, we prove that any pair of distinct points in the Bowditch Boundary of a relatively geometrically cubulated group are separated by the limit set of a hyperplane stabilizer. 
An intuitive way to try to separate points in the Bowditch Boundary of a single fiber group is to use the hyperplanes of the fiber complexes, but we need to extend the hyperplanes to ensure that they separate $\EG$. 

If $e$ is an edge of a fiber complex of $\EG_{bal}$, we define ${W}_e^{\EG}$ to be the $\EG_{bal}-$hyperstructure associated to $e$, which is two-sided and contractible by \cite[Lemma 3.34]{MartinSteenbock}. 

In the specific case of $(G,\mc{P})$, the codimension--$1$ subgroups that we use to separate points in $\partial_{\mc{P}}G$ that lie in the Bowditch Boundary of a single fiber group will be finite index subgroups of the $\Stab_G (W_e^{\EG})$. 

The far apart condition will ensure that projections of $W_e^{\EG}$ have properties similar to the $X_{bal}-$hyperstructures. 
We now introduce a few facts about these \textbf{projected hypergraphs}.

\begin{definition}
Let ${W}_e^{\EG}$ be an $\EG_{bal}-$hyperstructure. The \textbf{projected hypergraph $Z_e^{\EG}$} is the image of ${W}_e^{\EG}$ under the projection $p:\EG_{bal} \to X_{bal}$. 
\end{definition}

Unlike the hypergraphs $Z_e^X$ associated to classes of opposite edges in $X_{bal}$, it is possible that the intersection of a projected hypergraph $Z_e^{\EG}$ with the boundary of a polygon in $X_{bal}$ includes a vertex. 
\begin{definition}
The \textbf{hypercarrier $V_e^{\EG}$ associated to $Z_e^{\EG}$} is the union (in $X_{bal}$) of the polygons of $X_{bal}$ whose interior intersects $Z_e^{\EG}$.
\end{definition}

We can also construct an \textbf{abstract hypercarrier $\tilde{V}_e^{\EG}$ associated to $Z_e^{\EG}$} as follows: 
let $\tilde{Y}_e^{\EG}$ be the abstract carrier obtained by viewing ${W}_e^{\EG}$ as an $\EG_{bal}-$hyperstructure. 
For each polygon $\hat{U}$ of $\hat{Y}_e^{\EG}$, let $q(\hat{U})$ be the image in $X_{bal}$ after mapping $\hat{U}$ into $\EG_{bal}$ and applying the projection $p:\EG_{bal} \to X_{bal}$. Note that $q(\hat{U})$ is a polygon of $X_{bal}$ (but may have fewer boundary edges than $\hat{U}$). 
For each $\hat{U}$, $\hat{V}_e^{\EG}$ contains a copy of $q(\hat{U})$.
If $\rho$ is common piece of $\hat{U_1}$ and $\hat{U_2}$, the copies of $q(\hat{U_1})$ and $q(\hat{U_2})$ are glued together along the image of $q(\rho)$ which may be either a path or a vertex. 

Martin and Steenbock's argument showing that the natural immersion of a hypercarrier of $W_e^X$ is an embedding also applies to the natural immersion $\tilde{V}_e^{\EG}\to V_{e}^{\EG}$:
\begin{proposition}[{\cite[Theorem 3.10]{MartinSteenbock}}]\label{projected hypercarrier embed}
The natural immersion $\tilde{V}_e^{\EG}\to V_e^{\EG}$ is an embedding.
\end{proposition}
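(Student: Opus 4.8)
The plan is to run essentially the argument Martin and Steenbock use to prove that the abstract carrier of an $X_{bal}$--hypergraph $Z_e^X$ embeds in $X_{bal}$ \cite[Theorem 3.10]{MartinSteenbock}, making only the modifications forced by the fact that the projection $p\colon\EG_{bal}\to X_{bal}$ collapses the ``blown-up'' boundary edges of the polygons of $\EG_{bal}$ to vertices of $X_{bal}$. Concretely, $\tilde V_e^{\EG}$ is a ``band'' of polygons of $X_{bal}$: a copy of $q(\hat U)$ for each polygon $\hat U$ of the abstract carrier $\tilde Y_e^{\EG}$ of the $\EG_{bal}$--hyperstructure $W_e^{\EG}$, with consecutive copies glued along a single edge or a single vertex of $X_{bal}$. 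The content of the proposition is exactly that this band does not self-overlap inside $X_{bal}$, and, just as in \cite{MartinSteenbock}, the obstruction to a self-overlap is the $C'(\frac16)$ condition.

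First I would record that the abstract carrier $\tilde Y_e^{\EG}$ embeds in $\EG_{bal}$; this follows from the contractibility and two-sidedness of $W_e^{\EG}$ \cite[Lemma 3.34]{MartinSteenbock} by the reasoning of \cite[Theorem 3.10]{MartinSteenbock}, so the cells of the band map to pairwise distinct cells of $\EG_{bal}$. Next, since every polygon of $X$ and every edge of $X$ not interior to a fiber complex lifts uniquely to $\EG$, any two distinct polygons of the band have distinct images $q(\hat U)$ in $X_{bal}$, and any two of their non-collapsed boundary edges with the same image in $X_{bal}$ would lift to a common edge of $\EG_{bal}$, contradicting the previous step. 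Hence a failure of injectivity of $\tilde V_e^{\EG}\to V_e^{\EG}$ can only arise from two polygons of the band that are \emph{not} consecutive meeting, inside $X_{bal}$, along an edge or a vertex that is not one of the gluing cells of the band. I would then rule this out with the small cancellation argument of \cite[Theorem 3.10]{MartinSteenbock}: a minimal such self-overlap of the band bounds a disk subdiagram of the $C'(\frac16)$ polygonal complex $X_{bal}$, and the portion of the boundary of this subdiagram lying on $\partial V_e^{\EG}$ runs, inside each polygon $q(\hat U)$ that it meets, between the two edges through which $W_e^{\EG}$ passes; by the choice of subdivision these are far apart in $q(\hat U)$ (\Cref{far apart}, \cite[Lemma 3.24]{MartinSteenbock}), so every face of the subdiagram contributes a boundary arc that is a concatenation of at least four pieces, and a Greendlinger-type estimate on a reduced diagram over $\cyc{A,B \, | \, R}$ then contradicts $|p|<\frac16|r|$ for every piece $p$ of every relator $r$.

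The one genuinely new point — and the step I expect to require the most care — is the bookkeeping around the collapsed edges. The blown-up edges of the polygons of $\EG_{bal}$ project to vertices of $X_{bal}$ stabilised by conjugates of $A$ or $B$, so in $\tilde V_e^{\EG}$ consecutive polygons may be glued along such a vertex rather than an edge, and one must check that this never shortens the relevant boundary arcs in the hypothetical self-intersection diagram. The key observations are that the two edges of $\hat U$ through which $W_e^{\EG}$ passes are among the \emph{un}collapsed edges of $\hat U$ (their midpoints are endpoints of genuine dual segments of the hyperstructure, not points interior to a fiber complex), so the ``far apart'' estimate is measured among edges that survive in $q(\hat U)$; and that collapsing an edge to a vertex only increases the combinatorial separation of distinct polygons along the band, so it cannot manufacture a new short overlap. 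Once one verifies that no face of the putative diagram can be absorbed into a collapsed (fiber) region, the argument closes exactly as in \cite{MartinSteenbock}, yielding injectivity, hence the desired embedding $\tilde V_e^{\EG}\to V_e^{\EG}$.
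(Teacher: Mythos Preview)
The paper gives no proof of this proposition: it simply observes that Martin and Steenbock's argument for \cite[Theorem~3.10]{MartinSteenbock} ``also applies'' and records the citation. Your proposal is thus the same approach, carried out in more detail than the paper itself provides.

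One factual point needs correction. You assert that ``the two edges of $\hat U$ through which $W_e^{\EG}$ passes are among the \emph{un}collapsed edges of $\hat U$.'' This is not true in general: the $\EG_{bal}$--hyperstructure $W_e^{\EG}$ is generated by an edge $e$ in a fiber complex, and it typically enters and exits the polygons of $\EG_{bal}$ through blown-up boundary edges that \emph{do} collapse to vertices of $X_{bal}$ under $p$. Indeed the paper explicitly remarks, just before defining $V_e^{\EG}$, that ``it is possible that the intersection of a projected hypergraph $Z_e^{\EG}$ with the boundary of a polygon in $X_{bal}$ includes a vertex.'' Fortunately this does not damage the argument: the ``far apart'' property in \Cref{far apart} is stated for pairs of \emph{simplices} $\tau_1,\tau_2$ of $\partial U$, not just edges, and the subdivision is chosen so that diametrically opposed edges of a polygon of $\EG_{bal}$ project to simplices (possibly vertices) that are far apart in the image polygon \cite[Lemma~3.24]{MartinSteenbock}. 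So the Greendlinger/disk-diagram step goes through with the projected simplices in place of edges, and the rest of your outline is fine.
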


The equivariance of the projection $\EG_{bal}\to X_{bal}$ also implies that $\Stab_G({W}_e^{\EG}) \le \Stab_G(Z_e^{\EG})$.

In the following, we adopt the convention that sets $Z = Z_e^X$ and $Z = Z_e^{\EG}$ are subsets of $X_{bal}$ and are called \textbf{hypergraphs (of $X_{bal}$)}.  

\begin{example}
We briefly recall \Cref{simple example} to give very basic examples of these objects. For an edge $e$ in $EG_v$, ${W}_e^{\EG}$ is the midpoint of $e$. Its carrier is $e$.  Likewise for an edge $f$ in the Bass-Serre tree, $Z_f^{X}$ is the midpoint of the edge $f$. The carrier of $Z_f^X$ is $f$.
\end{example}

\section{Realizing and working with the Bowditch Boundary}\label{S: Peripheral Structures}

In this section, we build tools that will help us turn stabilizers of hypergraphs in $X_{bal}$ and $\EG-$hyperstructures into codimension--$1$ subgroups that separate points in the Bowditch boundary. Our ultimate goal is to use these subgroups for \Cref{t:Rel BW}. A given group $K$ may admit multiple relatively hyperbolic structures, which yield different Bowditch boundaries.  In \Cref{S: separation refined}, we prove that we can often separate points in the Bowditch boundary of a refined peripheral structure by separating their preimages in the boundary of the original structure. The remainder of the section, especially \Cref{T: separation criterion}, adapts prior work of the authors with Groves \cite{GenFine} on separating pairs of points in Bowditch boundaries with quasiconvex subgroups.


\subsection{Separating Points in the Bowditch Boundary of a Refined Peripheral Structure}\label{S: separation refined}

Here we consider $(K,\mc{R})$, a 
refined peripheral structure for a relatively hyperbolic pair $(K,\mc{D})$ where each $R\in\mc{R}$ is conjugate into some $D\in\mc{D}$. In our application, $(K,\mc{D})$ will be derived from a blowup construction modeled after $\EG_{bal}$ that we introduce in \Cref{S: blowup main}. In this subsection, we lay the groundwork so that in \Cref{S: blowup main}, we can apply \Cref{t:Rel BW}, the relatively geometric cubulation criterion, if we can show that:
\begin{enumerate}
\item any two points in the Bowditch Boundary of $(K,\mc{D})$ can be separated by the limit set of a full relatively quasiconvex (in $\mc{D}$) codimension--$1$ subgroup, and
\item any two points in the Bowditch Boundary of $(K,\mc{R})$ that lie in the limit set of $D^g$ can be separated by a full relatively quasiconvex codimension--$1$ subgroup. 
\end{enumerate}


The coarser relative structure $(K,\mc{D})$ is a sometimes also called a \emph{parabolically extended structure} for $(K,\mc{R})$ \cite{WenyuanYang2014}. Yang proves the following useful relationship between their Bowditch boundaries:
\begin{proposition}[{ \cite[Lemmas~4.13-4.14]{WenyuanYang2014}}]
\label{P: peripheral boundary map}
There exists a continuous $K$--equivariant surjective map $\phi: \partial_{\mc{R}} K \to \partial_{\mc{D}}K$. Further, for any conical limit point $x$ of $\partial_{\mc{D}}K$, $\phi\inv(x)$ consists of a single point.
\end{proposition}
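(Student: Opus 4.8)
The plan is to recognize this as a special case of the general picture describing how a Bowditch boundary changes when one passes to a \emph{parabolically extended} peripheral structure, and to prove it by building an equivariant coarse map between models of the two boundaries.  Realize $\partial_{\mc P}G$ and $\partial_{\mc Q}G$ either via Bowditch's fine hyperbolic graph model or, more concretely, via Groves--Manning cusped spaces: fix a finite generating set $S$ of $G$ and let $X(G,\mc P)$ and $X(G,\mc Q)$ be the cusped spaces obtained from $\Cay(G,S)$ by attaching combinatorial horoballs along the cosets of the subgroups in $\mc P$ and in $\mc Q$ respectively.  By \Cref{P: hyp rel vertex} and \Cref{rh structure P} both are $G$--equivariantly hyperbolic with geometrically finite $G$--actions, and $\partial X(G,\mc P)\cong\partial_{\mc P}G$, $\partial X(G,\mc Q)\cong\partial_{\mc Q}G$ as $G$--spaces.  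The structural input is that every $P\in\mc P$ is contained in a \emph{unique} $Q\in\mc Q$ and that $(Q,\mc P_Q)$ is again relatively hyperbolic, where $\mc P_Q=\{P\in\mc P:P\le Q\}$; heuristically, $\partial_{\mc P}G$ is obtained from $\partial_{\mc Q}G$ by replacing each parabolic point $q$ fixed by a conjugate $Q^g$ with the Bowditch boundary $\partial_{\mc P_{Q^g}}(Q^g)$, and $\phi$ is the collapse of each such piece back to $q$.

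First I would construct a $G$--equivariant, coarsely Lipschitz map $f\colon X(G,\mc P)\to X(G,\mc Q)$: it is the identity on the copy of $\Cay(G,S)$, and on the horoball over a coset $gP$ it maps into the horoball over $gQ$ (for the unique $Q\ge P$), compatibly with the inclusion $gP\hookrightarrow gQ$ at depth zero and collapsing in the depth direction.  A routine estimate on combinatorial horoballs shows $f$ is coarsely Lipschitz and $G$--equivariant, and since it sends geodesic rays to unbounded paths it induces a continuous $G$--equivariant map $\phi\colon\partial_{\mc P}G\to\partial_{\mc Q}G$.  Surjectivity comes from the conical/bounded-parabolic dichotomy for the geometrically finite action on $\partial_{\mc Q}G$: a conical limit point is a subsequential limit of a sequence $g_n\cdot x_0$ in $\Cay(G,S)$, which also subconverges in the compact space $\partial_{\mc P}G$ to a preimage, while the parabolic point fixed by $Q^g$ is the $\phi$--image of the point fixed by any $P^g\le Q^g$.

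The heart of the argument, and the step I expect to be the main obstacle, is showing $\phi^{-1}(x)$ is a single point when $x$ is conical.  Since $x$ is conical it is not a bounded parabolic point of $(G,\mc Q)$, so a geodesic ray $\gamma$ in $X(G,\mc Q)$ to $x$ has uniformly bounded penetration in every $\mc Q$--horoball, i.e.\ $\gamma$ stays in a bounded neighborhood of $\Cay(G,S)$.  Suppose $\xi,\xi'\in\partial_{\mc P}G$ both map to $x$.  Neither can be a bounded parabolic point of $(G,\mc P)$: if $\xi$ were fixed by $P^g$, then $\phi(\xi)$ would be the parabolic point fixed by $Q^g\supseteq P^g$, contradicting conicality of $x$.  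Hence $\xi,\xi'$ are conical limit points of $(G,\mc P)$, represented by geodesic rays $\alpha,\alpha'$ in $X(G,\mc P)$ confined to a bounded neighborhood of $\Cay(G,S)$; their $f$--images are uniform quasigeodesics in $X(G,\mc Q)$ limiting to $x$, hence fellow-travel $\gamma$ and each other.  The delicate point is to promote fellow-traveling in $X(G,\mc Q)$ back to fellow-traveling in $X(G,\mc P)$ along these shallow rays: a short detour of $\gamma$ or of an $f$--image through a $\mc Q$--horoball over $gQ$ must be accounted for inside the portion of $X(G,\mc P)$ lying over $gQ$, and one needs to bound its $X(G,\mc P)$--length using that this portion is a cusped space for $(Q,\mc P_Q)$ with entry/exit points close in that space.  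Carrying this out forces $\alpha$ and $\alpha'$ to be asymptotic in $X(G,\mc P)$, so $\xi=\xi'$.  In practice one would not redo all of this but invoke \cite[Lemmas~4.13--4.14]{WenyuanYang2014}, where exactly this analysis is carried out in Yang's framework of partial cone-offs.
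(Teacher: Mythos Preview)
The paper does not prove this proposition at all; it is stated purely as a citation of \cite[Lemmas~4.13--4.14]{WenyuanYang2014}, with no argument given.  Your proposal therefore goes well beyond what the paper does, and since you close by deferring to Yang, it is certainly adequate for the paper's purposes.

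That said, your direct sketch via cusped spaces has one soft spot worth flagging.  The claim that the coarsely Lipschitz map $f\colon X(G,\mc P)\to X(G,\mc Q)$ ``induces a continuous $G$--equivariant map $\phi$'' on Gromov boundaries is not justified by the observation that geodesic rays map to unbounded paths: an arbitrary coarsely Lipschitz map between hyperbolic spaces need not extend to boundaries, and unboundedness of the image of a ray does not by itself pin down a boundary point or give continuity.  What you actually need is that $f$ sends geodesic rays to \emph{uniform quasi-geodesics} (or, equivalently, coarsely preserves Gromov products from below), and this requires an argument---essentially the same bounded-penetration analysis you later invoke for the injectivity step.  Yang's own proof avoids this issue by working not with cusped spaces but with partial cone-offs and the Floyd-type topology on the Bowditch boundary, where the collapse map and its continuity are built into the description of $\partial_{\mc Q}G$ as a quotient of $\partial_{\mc P}G$.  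Your cusped-space route can be made to work, but the boundary-extension step is not free and would roughly double the length of the honest write-up.
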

In particular, the map $\phi$ collapses the entire limit set $\Lambda_{\mc{R}} D^g$ of a maximal parabolic in $(G,\mc{R})$ to the parabolic point it stabilizes in $\partial_{\mc{D}}K$.  
Separating points in the Bowditch Boundary is sometimes more straightforward in one relative structure than another.  

\begin{proposition}\label{P: projection preimages separation}
Let $H\le G$ be a relatively quasiconvex in $(K,\mc{D})$ and let $\Lambda_{\mc{D}}H$ and $\Lambda_{\mc{R}}H$ be the limit sets of $\Lambda$ in $(K,\mc{D})$ and $(K,\mc{R})$ respectively.

If $x$ and $y$ lie in distinct $H$--components of $\partial_{\mc{D}}K\setminus \Lambda_{\mc{D}} H$, then any $x_0\in\phi\inv (x)$ and $y_0\in\phi\inv (y)$ lie in distinct $H$--components of $\partial_{\mc{R}} K\setminus \Lambda_{\mc{R}} H$. 
\end{proposition}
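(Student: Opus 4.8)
\textbf{Proof strategy for Proposition~\ref{P: projection preimages separation}.}
The plan is to exploit the continuity and equivariance of the collapse map $\phi\colon \partial_{\mc{P}}G \to \partial_{\mc{Q}}G$ from \Cref{P: peripheral boundary map}, together with the fact that $H$ is a finite index subgroup of $\Stab_G(Z_e^X)$ and hence (once we know it is full relatively quasi-convex, or at least that it has a well-behaved limit set) that $\Lambda_{\mc{P}}H = \phi\inv(\Lambda_{\mc{Q}}H)$. The key point is that $\phi$ is $H$-equivariant, so it descends to a map between the quotients $\left(\partial_{\mc{P}}G \smallsetminus \Lambda_{\mc{P}}H\right)$ and $\left(\partial_{\mc{Q}}G \smallsetminus \Lambda_{\mc{Q}}H\right)$ that intertwines the two $H$-actions. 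Suppose toward a contradiction that $x_0 \in \phi\inv(x)$ and $y_0 \in \phi\inv(y)$ lie in the same $H$-component of $\partial_{\mc{P}}G \smallsetminus \Lambda_{\mc{P}}H$. Then there is a path (or, if we work with the combinatorial/topological notion of ``$H$-component'' used in \cite{RelCannon}, a suitable connecting object) joining $x_0$ to $y_0$ in $\partial_{\mc{P}}G \smallsetminus \Lambda_{\mc{P}}H$; pushing it forward under $\phi$ and using that $\phi\left(\partial_{\mc{P}}G \smallsetminus \Lambda_{\mc{P}}H\right) \subseteq \partial_{\mc{Q}}G \smallsetminus \Lambda_{\mc{Q}}H$ (which follows from $\phi\inv(\Lambda_{\mc{Q}}H) = \Lambda_{\mc{P}}H$), we would get that $x = \phi(x_0)$ and $y = \phi(y_0)$ lie in the same $H$-component of $\partial_{\mc{Q}}G \smallsetminus \Lambda_{\mc{Q}}H$, contradicting the hypothesis.

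First I would pin down the precise definition of ``$H$-component'' being used (the one from the hypotheses of \Cref{t:Rel BW}, i.e. $H$-distinct components of $\partial G \smallsetminus \Lambda H$), and record the elementary topological fact that a continuous surjection $\phi$ with $\phi\inv(\Lambda_{\mc{Q}}H) = \Lambda_{\mc{P}}H$ restricts to a continuous surjection $\partial_{\mc{P}}G \smallsetminus \Lambda_{\mc{P}}H \twoheadrightarrow \partial_{\mc{Q}}G \smallsetminus \Lambda_{\mc{Q}}H$ that sends connected components into connected components and is $H$-equivariant, hence sends $H$-orbits of components into $H$-orbits of components. Second I would establish the identity $\Lambda_{\mc{P}}H = \phi\inv(\Lambda_{\mc{Q}}H)$: the inclusion $\Lambda_{\mc{P}}H \subseteq \phi\inv(\Lambda_{\mc{Q}}H)$ is immediate from equivariance and continuity of $\phi$ (limit points of $H$ map to limit points of $H$), while the reverse inclusion uses Proposition~\ref{P: peripheral boundary map}: a point of $\phi\inv(\Lambda_{\mc{Q}}H)$ lies over either a conical limit point of $(G,\mc{Q})$ in $\Lambda_{\mc{Q}}H$, in which case the fibre is a single point that must already be in $\Lambda_{\mc{P}}H$, or over a parabolic point of $\Lambda_{\mc{Q}}H$ fixed by some fibre group $G_v$, in which case the whole fibre is $\Lambda_{\mc{P}}G_v$ and one argues (using that $H\cap G_v$ has finite index in $G_v$, since $Z_e^X$ is a hypergraph of $X_{bal}$ meeting only finitely many cells around $v$) that $\Lambda_{\mc{P}}(H\cap G_v) = \Lambda_{\mc{P}}G_v$, so the whole fibre is in $\Lambda_{\mc{P}}H$. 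Third I would run the contradiction argument above.

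I expect the main obstacle to be the reverse inclusion $\phi\inv(\Lambda_{\mc{Q}}H) \subseteq \Lambda_{\mc{P}}H$ over parabolic points: one must control how $H$ intersects each fibre group $G_v$ whose parabolic point lies in $\Lambda_{\mc{Q}}H$ and argue that this intersection is large enough (cocompact on, or of finite index in, $G_v$) to recover the full fibre $\Lambda_{\mc{P}}G_v$ inside $\Lambda_{\mc{P}}H$ — this is where the specific geometry of $Z_e^X$ as an $X_{bal}$-hyperstructure, and the cocompactness of the $G$-action on $X$, will need to be invoked, possibly together with the standard fact that limit sets of finite-index (or finite-covolume) subgroups of a parabolic subgroup coincide with the limit set of the parabolic. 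A secondary subtlety is making the manipulation of ``$H$-components'' rigorous if ``component'' is not literally a path-component but the equivalence classes used in the boundary-separation criterion of \cite{RelCannon}; in that case I would phrase everything in terms of the equivalence relation being refined under $\phi\inv$ and check that $\phi$-preimages of distinct $H$-classes are distinct $H$-classes, which is the contrapositive of what is needed.
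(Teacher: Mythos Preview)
Your core strategy --- the contrapositive via continuity and $H$-equivariance of $\phi$ --- is exactly the paper's proof, which is literally a single sentence: if $x_0,y_0$ were in the same $H$-component of $\partial_{\mc{P}}G\setminus\Lambda_{\mc{P}}H$, continuity and equivariance force $x=\phi(x_0)$ and $y=\phi(y_0)$ into the same $H$-component downstairs. Your additional steps are not needed for the paper's purposes.

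However, your analysis of the ``main obstacle'' contains a genuine error. You assert that for a parabolic point $v\in\Lambda_{\mc{Q}}H$ one has $H\cap G_v$ of \emph{finite index} in $G_v$. This is false: the hypergraph $Z_e^X$ is an $X_{bal}$-hyperstructure and therefore contains \emph{no} vertices of $X_{bal}$, so by Proposition~\ref{P: protofull} the intersection $H\cap G_v$ is \emph{finite} for every vertex $v$. Consequently no parabolic point of $(G,\mc{Q})$ lies in $\Lambda_{\mc{Q}}H$ at all, and the case you worry about is vacuous. Your proposed mechanism (recovering the whole fibre $\Lambda_{\mc{P}}G_v$ inside $\Lambda_{\mc{P}}H$ via a finite-index intersection) would fail if it were ever invoked; fortunately it never is. If you want to make the inclusion $\phi^{-1}(\Lambda_{\mc{Q}}H)\subseteq\Lambda_{\mc{P}}H$ explicit, the correct route is: $\Lambda_{\mc{Q}}H$ consists entirely of conical limit points of $(G,\mc{Q})$, over which $\phi$ has singleton fibres by Proposition~\ref{P: peripheral boundary map}, and those singletons are already in $\Lambda_{\mc{P}}H$.
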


\begin{proof}
Since $\phi$ is continuous and equivariant, if $x_0$ and $y_0$ were in the same $H$--component of $\partial_{\mc{D}'} K\setminus \Lambda_{\mc{D}'} H$, their images under $\phi$ must lie in the same $H$--component of $\partial_{\mc{D}} K\setminus \Lambda_{\mc{D}} H$.
\end{proof}

\subsection{Fine hyperbolic graphs, generalized fine actions and the Bowditch Boundary}
\label{S: fine genfine}

Later, via a more general framework, we will see that $X_{bal}\oskel$ is a fine hyperbolic graph that can be used to realize the Bowditch Boundary of $(G,\mc{Q})$. Similarly, $\EG_{bal}\oskel$ can be used to realize the Bowditch Boundary of $(G,\mc{P})$. In this subsection, we introduce some technical details about the relationship between fine hyperbolic graphs, generalized fine actions defined below and the Bowditch Boundary of a relatively hyperbolic group.

Suppose that $(K,\mc{D})$ acts relatively geometrically on a CAT(0) cube complex $\tilde{C}$. 
Then \cite[Theorem 5.1]{CharneyCrisp} implies that both $\tilde{C}$ and $\tilde{C}\oskel$ are quasi-isometric to the coned-off Cayley graph. 
Note that $\tilde{C}\oskel$ may have infinite edge stabilizers and therefore fails to be a fine hyperbolic graph with finitely many $K$-orbits of edges and finite edge stabilizers. 

\begin{definition}
Let $\Gamma$ be a graph and let $\mc{B}$ be a collection of pairwise disjoint connected sub-graphs of $\Gamma$. The \textbf{complete electrification of $\Gamma$ with respect to $\mc{B}$} is the graph $\Gamma'$ constructed by collapsing each $B\in\mc{B}$ to a single vertex $v_B$. 
\end{definition}

Recent work of the authors and Groves \cite{GenFine} introduced the following notion in which complete electrifications relate a graph like $\tilde{C}\oskel$ to a fine hyperbolic graph with compact fundamental domain and finite edge stabilizers. 

\begin{definition}
\label{D:GeneralizedFine}
Let $K$ be a group that acts by isometries on a $\delta$--hyperbolic cell complex, and let $\mc{D}$ be a finite and almost malnormal collection of finitely generated subgroups of $K$. 
For each $D\in\mc{D}$ and $k\in K$, let $\Sigma_{D^k}$ be the sub-graph consisting of cells whose stabilizers are \textbf{commensurable} to $D^k$; that is, those cells whose stabilizer is finite index in $D^k$. 
A \textbf{circuit without peripheral backtracking} is a circuit whose intersection with any $\Sigma_{D^k}$ is connected. 
We say that $\Sigma$ is \textbf{generalized fine} with respect to the action of $(K,\mc{D})$ if:
\begin{enumerate}
\item the quotient $\leftQ{\Sigma}{K}$ is compact,
\item every cell with infinite stabilizer lies in some $\Sigma_{D^k}$,
\item the sub-graphs $\Sigma_{D^k}$ are compact and connected, and
\item for every edge $e$ of $\Sigma$ with finite stabilizer and every $n\in\naturals$, there are finitely many circuits without peripheral backtracking of length $n$ that contain $e$. 
\end{enumerate}
\end{definition}

Collapsing the stabilized sub-graphs $\Sigma_{D^k}$ yields a fine hyperbolic graph. 
We will  see that $\EG_{bal}\oskel$ is generalized fine with respect to the action of $(G,\mc{P})$. 
We now introduce some properties of generalized fine actions from \cite[Proposition 3.4 and Proposition 3.5]{GenFine}:
\begin{proposition}
\label{electricQI}
Let $\Sigma$ be generalized fine with respect to the action of $(K,\mc{D})$. The graph $\Sigma'$ obtained by electrifying $\mc{B} =\{\Sigma_{D^k}:\,D\in \mc{D},\,k\in K\}$ is a fine hyperbolic graph with a $K$-cocompact action where all edge stabilizers are finite.
Moreover, the collapse map $\beta:\Sigma\to\Sigma'$ that collapses the $\Sigma_{D^k}$ is a $K$-equivariant quasi-isometry. 
\end{proposition}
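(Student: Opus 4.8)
The plan is to read off the properties of $\Sigma'$ from those of $\Sigma$, leaving fineness for last since it is the one point needing real care. First I would record that the collapsed subgraphs are uniformly bounded: since $\leftQ{\Sigma}{K}$ is compact (the first condition of \Cref{D:GeneralizedFine}) the family $\mc{B}$ has finitely many $K$--orbits, and each $\Sigma_{D^k}$ is compact (the third condition), so there is a uniform constant $M$ bounding the diameter and the number of cells of every $\Sigma_{D^k}$. Almost malnormality of $\mc{D}$ forces distinct members of $\mc{B}$ to be disjoint: a cell lying in both $\Sigma_{D^k}$ and $\Sigma_{D'^{k'}}$ would have stabilizer commensurable with both $D^k$ and $D'^{k'}$, so $D^k$ and $D'^{k'}$ would be commensurable and hence $\Sigma_{D^k}=\Sigma_{D'^{k'}}$. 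In particular only finitely many parallel edges are ever identified, so I take $\Sigma'$ to be the resulting simplicial quotient.

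The quasi-isometry statement and the geometric half of the structural claim then follow quickly. The map $\beta$ is $K$--equivariant because $\mc{B}$ is $K$--invariant; it is distance non-increasing, and any edge path of length $\ell$ in $\Sigma'$ lifts to an edge path of length at most $(M+1)\ell$ in $\Sigma$ by replacing each traversed collapsed vertex $v_{D^k}$ with a geodesic of length $\le M$ inside $\Sigma_{D^k}$ joining the endpoints of the incoming and outgoing edges. Together with surjectivity of $\beta$ on vertices, this shows $\beta$ is a $K$--equivariant quasi-isometry, whence $\Sigma'$ is hyperbolic because $\Sigma$ is. Cocompactness of the $K$--action on $\Sigma'$ is immediate, since $\beta$ descends to a surjection $\leftQ{\Sigma}{K}\to\leftQ{\Sigma'}{K}$.

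For the remaining parts of the first claim: by the second condition of \Cref{D:GeneralizedFine} every cell with infinite stabilizer lies in some $\Sigma_{D^k}$ and is therefore collapsed, so each edge of $\Sigma'$ is the image of an edge of $\Sigma$ with finite stabilizer; since $\Stab_K(\bar e)$ permutes the finitely many edges of $\Sigma$ lying over $\bar e$ and the pointwise stabilizer of such an edge is finite, $\Stab_K(\bar e)$ is finite. One also sees that collapsing $\Sigma_{D^k}$ carries its setwise stabilizer, which is commensurable with $D^k$, onto the stabilizer of the new vertex $v_{D^k}$, so that the vertex stabilizers of the new vertices are, up to commensurability, the $K$--conjugates of members of $\mc{D}$.

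The one substantial step, and the part I expect to be the main obstacle, is fineness of $\Sigma'$; the difficulty is that the collapsed subgraphs are locally infinite in $\Sigma$, so circuits of $\Sigma'$ cannot be lifted naively. Fix an edge $\bar e$ of $\Sigma'$ and $n\in\naturals$, and pick one of the finitely many edges $e$ of $\Sigma$ over $\bar e$; by the previous paragraph $e$ has finite stabilizer, so the fourth condition of \Cref{D:GeneralizedFine} bounds the number of circuits of length at most $(M+1)n$ through $e$. Given a circuit $\gamma$ of length at most $n$ in $\Sigma'$ through $\bar e$, insert a geodesic of the appropriate $\Sigma_{D^k}$ at each of the fewer than $n$ collapsed vertices $\gamma$ meets, as in the quasi-isometry argument; because these $\Sigma_{D^k}$ are pairwise disjoint and disjoint from the non-collapsed part of $\gamma$, the result $\hat\gamma$ is a \emph{circuit} of $\Sigma$, of length at most $(M+1)n$, running through some edge over $\bar e$, and $\beta(\hat\gamma)=\gamma$. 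Thus $\gamma\mapsto\hat\gamma$ is injective, its image lies in the union over the finitely many edges over $\bar e$ of the finite sets of short circuits supplied by the fourth condition, and so there are only finitely many circuits of length at most $n$ through $\bar e$; that is, $\Sigma'$ is fine. (Alternatively, once $\Sigma'$ is known to be hyperbolic with a cocompact $K$--action, finite edge stabilizers, and --- by almost malnormality together with the second condition of \Cref{D:GeneralizedFine} --- every infinite vertex stabilizer fixing a unique vertex, one could instead invoke the fineness criterion of Bowditch \cite[Lemma 4.5]{BowditchRH} exactly as in the proof of \Cref{P: fine hyp graph}.)
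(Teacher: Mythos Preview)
The paper does not supply its own proof of this proposition: it is quoted from \cite[Propositions~3.4 and~3.5]{GenFine}, so there is no in-paper argument to compare against directly. Your write-up is a correct self-contained proof of the cited statement.

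A couple of remarks on how your argument sits relative to the paper's toolkit. Your main route to fineness---lifting a circuit of $\Sigma'$ to a circuit of $\Sigma$ by inserting geodesics inside the collapsed subgraphs, and then invoking condition~(4) of Definition~\ref{D:GeneralizedFine}---is not the one the paper reaches for in analogous situations. When the paper itself needs fineness (e.g.\ in Proposition~\ref{P: fine hyp graph} and Proposition~\ref{EG:gen fine}), it instead verifies Bowditch's criterion \cite[Lemma~4.5]{BowditchRH}: cocompactness, finite edge stabilizers, and each infinite vertex stabilizer fixing a unique vertex. That is exactly the alternative you sketch in your final parenthetical, and it is arguably the cleaner route here since it avoids the bookkeeping of showing the lifted loop is embedded. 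Your direct lifting argument has the advantage of being elementary and of making explicit use of condition~(4), which otherwise plays no role; either approach is fine.

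One small point: you pass to the simplicial quotient of the electrification and assert that only finitely many parallel edges are identified. This is true, and in fact follows from your uniform bound $M$ on the number of cells in each $\Sigma_{D^k}$ together with simpliciality of $\Sigma$ (so there are at most $M^2$ edges between any two collapsed subgraphs), but you should say so; as written it reads as if it followed from disjointness alone. Alternatively, you can skip the simplicial quotient entirely---the paper's definition of complete electrification does not identify parallel edges---and then every edge of $\Sigma'$ has a unique preimage in $\Sigma$, which slightly simplifies both the edge-stabilizer and the circuit-lifting steps.
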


Bowditch explains how to construct $\partial_{\mc{D}}K$ from a suitable action on a fine hyperbolic graph \cite[Section 9]{BowditchRH}. We recall this construction and how to extend it to generalized fine actions as in \cite[Definition 4.6]{GenFine}:
\begin{definition}
\label{D: fine boundary}
Let $(K,\mc{D})$ be a relatively hyperbolic pair and suppose that $\Gamma$ is a graph that is generalized fine with respect to the action of $(K,\mc{D})$. 
For each $D\in\mc{D}$ and $k\in K$, let $\Gamma_{D^k}$ be the sub-graph consisting of cells whose stabilizers are commensurable to $D^k$. 
Let $\Gamma'$ be a complete electrification of $\Gamma$ with respect to the $\Gamma_{D^k}$ and for all $D\in\mc{D}$ and $k\in K$, let $v_{D^k}$ be the vertex of $\Gamma'$ stabilized by $D^k$.
Let $\triangle \Gamma' = \partial\Gamma' \sqcup V(\Gamma')$ endowed with the following topology:
If $A$ is any finite subset of the vertices of $\Gamma'$ and $a\in \triangle \Gamma'$, define $N(a,A)$ to be the set of $b\in \triangle \Gamma'$ so that every geodesic from $a$ to $b$ avoids $A\setminus a$. A subset $U\subseteq \triangle \Gamma'$ is open if for every $a\in U$, there exists a finite set of vertices $A\subseteq V(\Gamma')$ so that $N(a,A)\subseteq U$.

Let $\Pi_{\Gamma'} = \{v_{D^k}:\,D\in\mc{D},\,k\in K\}$. 

The \textbf{Bowditch Boundary} of $(K, \mc D)$ is $\partial_{\mc D}K = \partial \Gamma'\cup \Pi_{\Gamma'}$ where $\partial \Gamma'$ is the visual boundary of the hyperbolic graph $\Gamma'$. 
The topology on $\partial_{\mc D}K$ is the subspace topology induced by the topology on $\triangle \Gamma'$. 
\end{definition}  

\subsection{Hypersets in Graphs}\label{S: separation}

Let $\Gamma$ be a connected graph.
A \textbf{hyperset} $L$ in $\Gamma$ is a collection of edge midpoints and vertices of $\Gamma$ so that $\Gamma\setminus L$ has two components. 
We will be mostly concerned with sets that arise as intersections of the walls from \Cref{S: wall construction} with the one-skeleton of their ambient space.
A \textbf{hyperset carrier} $J$ is a minimal connected induced sub-graph of $\Gamma$ containing $L$ 
having the property that
the intersection of $J$ with each complementary component of $\Gamma \smallsetminus L$ is connected.

Here are the two natural ways that hypersets and carriers will arise for us:
\begin{example}
Let $(K,\mc{D})$ act relatively geometrically on a CAT(0) cube complex $\tilde{C}$, and let $W$ be a hyperplane. 
Then $L = W\cap \tilde{C}^{(1)}$ is a hyperset. 
A hypercarrier $J$ for $L$ is the intersection of the carrier of $W$ with $\tilde{C}^{(1)}$.  	
\end{example}
In this situation, we will refer to $L$ as the \textbf{hyperset associated to $W$} and $J$ as \textbf{the hypercarrier} of the hyperset associated to $W$.

\begin{example}
Suppose $\Omega$ is a complex consisting of polygons with an even number of sides and cubes. Let $W$ be an $\Omega-$hyperstructure that separates $\Omega$ into two complementary components, or if $\Omega = X_{bal}$ as defined above, then $W$ can be also be a projected hypergraph. Then $L = W\cap \Omega^{(1)}$ is a hyperset. A hypercarrier $J$ for $L$ is the intersection of the associated hypercarrier for $W$ with $\Omega$.
\end{example}
As in the cube complex case, we will refer to $L$ as the \textbf{hyperset associated to $W$} and $J$ as \textbf{the hypercarrier} of the hyperset associated to $W$.
Let $\Sigma$ be a $\delta$-hyperbolic graph that is generalized fine with respect to the action of $(K,\mc{D})$ and for each $D\in\mc{D}$ and $k\in K$, let $\Sigma_{D^k}$ be the compact sub-graph stabilized by $D^k$. 

Hypersets are preserved when electrifying stabilized sub-graphs of a generalized fine action. 
\begin{proposition}\label{P: collapsing hypersets}
Let $\Sigma$ be generalized fine with respect to the action of $(K,\mc{D})$ and let $\Sigma_{D^k}$ be the sub-graph of cells whose stabilizer is commensurable to $D^k$ for $D\in\mc{D}$, $k\in K$. 
Let $\sigma:\Sigma\to \Gamma$ be the map that collapses the $\Sigma_{D^k}$. 
Let $S = \bigcup \{\Sigma_{D^k}:\, \Sigma_{D^k}\cap L \ne \emptyset\}$. 
If $L$ is a hyperset and $J$ is a quasiconvex hyperset carrier, then $\sigma(L)$ is a hyperset in $\Sigma(D^k)$, $\sigma(J)$ is a hyperset carrier and the components of $\Gamma\setminus \sigma(L)$ are images of the components of $\Gamma\setminus (S\cup L)$.   
\end{proposition}

\subsection{A Separation Criterion for the Bowditch Boundary}

In this section, we recall a criterion from \cite{GenFine} for separating points in the Bowditch Boundary constructed from a fine hyperbolic graph. 

We set the following assumptions for the remainder of this subsection:
\begin{hypotheses}\label{separation setup}
Let $(K,\mc{D})$ be a relatively hyperbolic pair and let $K$ act on a fine $\delta$--hyperbolic graph $\Gamma$ with the following properties:
\begin{itemize}
\item The action of $K$ is cocompact,
\item edge stabilizers are finite, and
\item each $D\in\mc{D}$ stabilizes a single vertex. 
\end{itemize}
Let $L$ be a hyperset with connected quasiconvex carrier $J$. 
\end{hypotheses}

\begin{definition}\label{D: two-sided carrier}
With the setup in \Cref{separation setup}, $J$ has the \textbf{two-sided carrier property} if there exist connected quasiconvex subsets $J^+$ and $J^-$ so that $J = J^+\cup J^-$, $J^+\cap J^-\subseteq L$, every path between vertices in distinct components of $\Gamma\setminus L$ must intersect both $J^+$ and $J^-$ and if $v$ is a vertex of $J^-\cap J^+$ with infinite stabilizer, then $v\in \Lambda\Stab_K(L)$.  
\end{definition}

When \Cref{separation setup} arise from electrifying the stabilized sub-graphs of $(K,\mc{D})$ acting relatively geometrically on a CAT(0) cube complex $\tilde{X}$, the two-sidedness of the hyperplanes gives us a natural way to show that hypersets in $\Gamma$ have the two-sided carrier property, see \cite[Proposition 6.3]{GenFine}.

When the Bowditch Boundary is constructed as in \Cref{D: fine boundary} from a fine hyperbolic graph, the points of the Bowditch Boundary are either \textbf{conical limit points} that lie in $\partial \Gamma$, the visual boundary of $\Gamma$ or are \textbf{parabolic vertices} of $\Gamma$ that are stabilized by maximal parabolics. 

Note that $\Gamma$ is not a proper metric space, so existence of a bi-infinite geodesic connecting points in the visual boundary of $X_{bal}\oskel$ does not follow immediately from the standard techniques for constructing such a bi-infinite geodesic.

\begin{proposition}
\label{biinfinite-geod}
Let $x,y\in \partial_{\mc{D}}K$. There exists a bi-infinite combinatorial geodesic 
between $x$ and $y$ in $\Gamma$. 
\end{proposition}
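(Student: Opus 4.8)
The plan is to produce the bi-infinite geodesic as a limit of finite geodesics, handling the non-properness of $X_{bal}^{(1)}$ via the fineness of the graph rather than via local compactness. First I would fix the setup: by \Cref{P: fine hyp graph}, $\Gamma := X_{bal}^{(1)}$ is a fine $\delta$-hyperbolic graph on which $G$ acts cocompactly with finite edge stabilizers, so (by \Cref{D: fine relhyp}) it witnesses the relative hyperbolicity of $(G,\mc{Q})$, and by \Cref{D: fine boundary} the boundary $\partial_{\mc{Q}}G$ is $\partial\Gamma \sqcup \Pi_\Gamma$ where the $\Pi_\Gamma$ are the parabolic vertices. Both $x$ and $y$ are either genuine boundary points of the hyperbolic graph $\Gamma$ or parabolic vertices of $\Gamma$ itself (since here the stabilized subgraphs $\Gamma_{Q^g}$ are single vertices, $\Gamma$ is already the fine model), so in all cases $x$ and $y$ are represented either by a vertex of $\Gamma$ or by an equivalence class of geodesic rays in $\Gamma$. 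I would reduce to the case that both are genuine ideal points; if one is a parabolic vertex it is already an actual vertex of $\Gamma$ and the argument is only easier.

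The main step is a diagonal/compactness argument. Pick vertices $p_n \to x$ and $q_m \to y$ along rays, and for each pair $(n,m)$ choose a combinatorial geodesic $\gamma_{n,m}$ from $p_n$ to $q_m$ in $\Gamma$. By $\delta$-hyperbolicity (thin triangles / stability of geodesics), after passing along $n,m \to \infty$ these geodesics fellow-travel each other within a uniform constant depending only on $\delta$ near any fixed basepoint, so there is a uniform bound on how many edges of $\gamma_{n,m}$ lie in any fixed ball around a basepoint $o$. Here is where fineness replaces properness: a fine graph need not be locally finite, but once we know a geodesic passes through a fixed edge $e$ and has bounded length inside a ball, fineness of $\Gamma$ bounds the number of combinatorial geodesics of that length through $e$, so only finitely many ``germs'' of geodesic segments through $e$ occur. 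Concretely, I would build the bi-infinite geodesic edge by edge: working outward from a segment of $\gamma_{n,m}$ near $o$, the collection of initial segments of the $\gamma_{n,m}$ (of each fixed combinatorial length, all passing near $o$) is finite by fineness, so by König's lemma / a diagonal argument there is a subsequence of the $\gamma_{n,m}$ that stabilizes on longer and longer central segments. The union of these stabilized segments is a bi-infinite combinatorial path $\gamma$ in $\Gamma$; it is a geodesic because each finite sub-path agrees with a sub-path of some genuine geodesic $\gamma_{n,m}$, and $\delta$-hyperbolicity ensures its two ends converge to $x$ and $y$ respectively (the forward end stays boundedly close to the rays defining $x$, likewise for $y$), using that distinct points of $\partial\Gamma$ are joined by a geodesic in any $\delta$-hyperbolic space provided one can extract such a limit.

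The hard part will be precisely the extraction of the limiting geodesic without local finiteness: I must argue that the number of geodesic segments of bounded length passing through a fixed edge and staying in a bounded region is finite. This is exactly the content of fineness together with the observation that geodesics in a $\delta$-hyperbolic graph that share endpoints near $o$ are uniformly close near $o$, so they are forced through one of finitely many edges at each ``radius'' from $o$; the bound on circuits through an edge then caps the branching. A secondary subtlety is anchoring the construction: I would first replace $x,y$ by well-chosen approximating vertices and fix an edge $e_0$ that lies on $\gamma_{n,m}$ for all large $n,m$ (such an edge exists because the $\gamma_{n,m}$ all pass within $O(\delta)$ of the coarse geodesic between $x$ and $y$, and fineness/cocompactness confines this to finitely many $G$-orbits of edges near $o$), and grow the geodesic in both directions from $e_0$. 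Once the combinatorial bi-infinite geodesic is in hand, verifying that its ideal endpoints are $x$ and $y$ is a routine hyperbolicity estimate, so I would not belabor it.
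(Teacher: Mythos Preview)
Your approach is viable but takes a genuinely different route from the paper. The paper does not attempt any compactness argument inside $X_{bal}^{(1)}$ at all: instead it invokes Bowditch's Lemmas~9.1 and~9.3 from \cite{BowditchRH} to pass to an auxiliary \emph{proper} hyperbolic space $Z$ with $\partial Z \cong \partial_{\mc{Q}}G$, finds the bi-infinite geodesic there by ordinary Arzel\`a--Ascoli, and then pulls it back to a bi-infinite geodesic in $X_{bal}^{(1)}$ via Bowditch's transfer lemma. This is a two-line proof once those lemmas are in hand.

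What you are doing is essentially re-proving the content of those Bowditch lemmas by hand: the K\"onig's-lemma extraction of a limiting geodesic in a fine hyperbolic graph is exactly the machinery Bowditch develops in Section~8 of \cite{BowditchRH}. Your identification of fineness as the substitute for local finiteness is correct, but the anchoring step you flag as ``a secondary subtlety'' is in fact the heart of the matter and is not quite justified as written. Fineness bounds \emph{circuits} through a fixed edge, not geodesic segments; to control the latter you need the consequence (Bowditch's Lemma~8.2) that between any two fixed vertices there are only finitely many arcs of each bounded length, and then you must argue that the $\gamma_{n,m}$, being $2\delta$-close to the fixed rays $\rho_x,\rho_y$, can be closed up against those rays into circuits of bounded length through a fixed edge of $\rho_x$. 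This works, but it is more than a hand-wave. The paper's route buys brevity by outsourcing exactly this work; your route buys self-containment at the cost of redoing it.
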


\begin{proof}
There is a proper hyperbolic space $Z$ so that $\partial Z \cong \partial_{\mc{D}}K$. 
Let $\triangle \Gamma = \Gamma^{(1)}\cup \partial \Gamma$.  
By \cite[Lemma 9.1]{BowditchRH}, there exists a map from $\gamma:\triangle \Gamma \to Z$ that induces a homeomorphism $\bar\gamma:\partial_{\mc{D}}K \to \partial Z$. Since $Z$ is proper and hyperbolic, there exists a bi-infinite geodesic $\alpha$ in $Z$ that joins $\bar{\gamma}(x)$ and $\bar{\gamma}(y)$. 
By \cite[Lemma 9.3]{BowditchRH}, there exists a bi-infinite geodesic $\Upsilon$ in $\Gamma$ so that $\gamma(\Upsilon)$ lies a bounded distance from $\alpha$, so $\Upsilon$ joins $x$ to $y$ in $\partial_{\mc{D}}K$. 
\end{proof}

\begin{definition}\label{D: separation}
Assuming \Cref{separation setup}, we say that $L$ \textbf{separates $p, q$} if $p, q\notin \Lambda \Stab_K(L)$ and one of the following holds:
\begin{itemize}
\item $p, q$ are both conical limit points, and there exists some geodesic $\gamma:(-\infty,\infty)\to \Gamma$ with $\lim_{t\to \infty}\gamma(t) = p$ and $\lim_{t\to-\infty}\gamma(t) = q$ so that there exists $T>0$ so that for all $t_- < -T <0 <T<t_+$, $\gamma(t_+)$ and $\gamma(t_-)$ are in distinct components of $\Gamma\,\setminus\, L$, 
\item $q$ is a parabolic vertex in $\Gamma$, $p$ is a conical limit point, and there exists some geodesic $\gamma:[0,\infty)\to \Gamma$ from $q=\gamma(0)$ to $p$ so that for $t$ sufficiently large, $\gamma(t)$ and $q$ are in distinct components of $\Gamma\,\setminus\, L$,
\item $p, q$ are both parabolic vertices in $\Gamma$, and $p, q$ lie in distinct components of $\Gamma\,\setminus\, L$. 
\end{itemize}
\end{definition} 

For the next theorem, we need a quick definition:
\begin{definition}
Let $G$ act on the components of a topological space and let $H\le G$. Two components $C_1,C_2$ are $H$--distinct if $h\cdot C_1\ne C_2$ for all $h\in H$. 
\end{definition}

In \cite[Theorem 5.7]{GenFine}, Groves and the authors prove that separation in the sense of \Cref{D: separation} implies separation in the Bowditch Boundary: 
\begin{theorem}[{\cite[Theorem 5.7]{GenFine}}]
\label{T: separation criterion}
Assume \Cref{separation setup} and assume the setup satisfies the two-sided carrier property. If $L$ separates $p, q\in \partial_{\mc{D}}K\setminus \Lambda\Stab_K(L)$, then there exists a finite index subgroup $H_L\le \Stab_K(L)$ so that $p$ and $q$ are in $H_L$--distinct components of $\partial_{\mc{D}}K\setminus \Lambda H_L$. 
\end{theorem}

\section{The Polygonal Case}
\label{S: polygonal case}

\subsection{Separating boundary points in a polygonal complex when the hyperstructures are trees} \label{S: polygonal case 1}

For Section~\ref{S: polygonal case 1}, we set these hypotheses:
\begin{hypotheses}\label{H: general polygonal setup}
Let $X_0$ be a polygonal complex where each polygon has an even number of sides. Let $(K,\mc{D})$ be a relatively hyperbolic pair that acts cocompactly on $X_0$ so that every edge has finite stabilizer, every $D\in\mc{D}$ stabilizes exactly one vertex and every infinite cell stabilizer is of the form $D^k$ for some $D\in \mc{D}$ and $k\in K$.
\end{hypotheses}

As in the more specific case of when $X_0 = X_{bal}$, given an edge $e$ of $X_0$, we let $Z_e^X$ denote the $X_0$--hyperstructure dual to $e$. We also refer to $Z_e^X$ as a hypergraph. 

We now impose some additional conditions that we will subsequently prove are satisfied by $X_{bal}$:
\begin{hypotheses}\label{H: polygonal wall hypotheses}
In addition to Hypotheses~\ref{H: general polygonal setup}, we assume that:
\begin{enumerate}
\item For any edge $e$ of $X_0$, $Z_e^X$ is an embedded tree in $X_0$ whose intersection with $X_0\oskel$ is quasi-convex in $X_0\oskel$, \label{I: is a tree}
\item For any edge $e$ of $X_0$, $Z_e^X$ separates $X_0$ into two complementary components. \label{I: separating 2} 
\item Further, $\Stab_K(Z_e^X)$ has finite intersection with any vertex stabilizer and is therefore full.  \label{I: strong full}
\item If $\gamma$ is a combinatorial geodesic, there exists an edge $e$ of $\gamma$ so that $Z_e^X$ crosses $\gamma$ exactly once. If $\gamma$ is infinite, there exists an $N\in\naturals$ so that every subsegment of $\gamma$ with length $N$ contains an edge $e$ so that $Z_e^X$ intersects $\gamma$ exactly once.  \label{I: many good walls exist}
\end{enumerate}
\end{hypotheses}
If $X_0$ satisfies these properties, then we say $X_0$ has \textbf{suitable walls}. 

We make a quick observation so we can use results from \Cref{S: fine genfine}. 
\begin{proposition}
The graph $X_0\oskel$ is fine and hyperbolic. 
\end{proposition}

\begin{proof}
The graph $X_0\oskel$ is hyperbolic by \Cref{CCrestatement}. Since the edge stabilizers are finite, each maximal parabolic stabilizes exactly one vertex and the action of $G$ is cocompact, so by \cite[Lemma 4.5]{BowditchRH}, $X_0\oskel$ is fine.  
\end{proof}

\begin{proposition}\label{P: cocompactness on polygonal hyp}
$\Stab_K(Z_e^X)$ acts cocompactly on $Z_e^X$, and $\Stab_K(Z_e^X)$ is relatively quasiconvex in $(K,\mc{D})$. 
\end{proposition}

\begin{proof}
Cocompactness follows immediately from \Cref{L: hyperstructure stab cocompact}. Relative quasiconvexity then follows by \Cref{H: polygonal wall hypotheses}\eqref{I: is a tree} and \Cref{T: qccrit}. 
\end{proof}

If $Z_e^X$ is a hypergraph in $X_0$, then the associated hyperset is $Z_e^X\cap X_0\oskel$. 
Similarly, if $Y_e^X$ is the carrier of $Z_e^X$, then the associated hyperset carrier is $Y_e^X\cap X_0\oskel$, which we denote by $J$ for ease of notation. 
We say that a hypergraph $Z_e^X$ \textbf{separates} $p,q\in \partial_{\mc{D}}K$ if the associated hyperset $L$ separates $p,q$ in $\partial_{\mc{D}}K\setminus \Lambda \Stab_K (Z_e^X)$ in the sense of \Cref{D: separation}. 

We recall some elementary facts about $\delta$--hyperbolic spaces.  



\begin{lemma}[Uniform fellow-traveling]
\label{unif-fellow-travel}
Let $X_0$ be a $\delta$--hyperbolic space.  Let $U \subseteq X_0$ be any convex subset.  
There exists $R = R(\delta)$ such that if $\gamma$ is a geodesic so that $\gamma \subseteq {\mc N}_\delta(U)$ then $\gamma \subseteq {\mc N}_R(U)$.  
\end{lemma}

\begin{proof}
This is an immediate consequence of the Morse lemma for hyperbolic spaces.  We can view $\gamma$ as a $(1, 2\delta)$--quasi-geodesic joining pairs of points in $U$.  
\end{proof}

\begin{definition}
Let $\gamma \subseteq X_0\oskel$ be a (possibly finite) geodesic so that any endpoints are vertices.  We define the \textbf{skewer set} as follows
\[
\sk(\gamma) = \{ Z_e^X \, \mid \, e \text{ is an edge of  } \gamma \}
\] 
\end{definition}

We say that each $Z_e^X \in \sk(\gamma)$ is \textbf{dual} to $\gamma$. 

\begin{remark}\label{properties-skewer}
If $\gamma$ is infinite and Hypotheses~\Cref{H: polygonal wall hypotheses} hold, there are infinitely many edges in $\gamma$ whose dual hypergraph crosses $\gamma$ exactly once. Therefore, $\sk(\gamma)$ is infinite when $\gamma$ is a geodesic ray or bi-infinite geodesic. There are finitely many $K$-orbits of edges in $X_0$. If $\gamma$ is infinite, there is a hypergraph $Z$ with infinitely many $K$-translates in $\sk(\gamma)$ that cross $\gamma$ exactly once which we will denote:
\[
\mc S_\gamma := \{ g_i Z \}_{i = 0}^\infty.
\]
Without loss of generality we can and will assume $g_0 = id_K$.
Moreover, each hypergraph $O \in \sk(\gamma)$ disconnects $\gamma$ into two components such that each lie in distinct components of $X_0\oskel \smallsetminus O$. 
\end{remark} 

Our goal in describing skewer sets is that we may understand stabilizers of hypergraphs using acylindricity of the $G$--action on $X_0$ from the relative hyperbolicity of $(K,\mc{D})$:
\begin{proposition}
\label{P: acylindrical}
Let $X_0$ and $(K, \mc{D})$ be as in \Cref{H: general polygonal setup}.
The action of $K$ on $X_0\oskel$ is acylindrical.
\end{proposition}

\begin{proof}
\cite[Theorem 5.1]{CharneyCrisp} implies the orbit map induces a $K$--equivariant quasi-isometry from any relative Cayley graph $\Cay(K, \mc{D}, S)$ with $S$ finite to $X_0\oskel$. 
The action of $K$ on $\Cay(K, \mc{D}, S)$ is acylindrical \cite[Proposition~5.2]{OsinAH}.  

Hence, the action of $K$ on $X_0\oskel$ is acylindrical because acylindricity is preserved under equivariant quasi-isometry.
\end{proof}

Note that in the definition of an action being acylindrical along a subgroup \Cref{def:AQI-cobddSubset} requires the subgroup $H$ to be quasi-isometrically embedded in $K$ with respect to any word metric associated to a finite generating set.

\begin{lemma}
The orbit of $\Stab_K (Z_e^X)$ is quasi-isometrically embedded in $X_0\oskel$.  
\end{lemma}

\begin{proof}
By fullness, the intersection of $\Stab_K(Z_e^X)$ with any cell stabilizer of $X_0\oskel$ is finite. 
Therefore, $\Stab_K (Z_e^X)$ acts properly on the tree $Z_e^X$. 
By \Cref{P: cocompactness on polygonal hyp}, $\Stab_K (Z_e^X)$ acts cocompactly on $Z_e^X$. 
Therefore, distance in $\Stab_K (Z_e^X)$ is coarsely equivalent to distance in $Z_e^X$, which is in turn coarsely equivalent to distance in $X_0\oskel$ because $Z_e^X\cap X_0\oskel$ is quasiconvex in $X_0\oskel$.
\end{proof}

Recall that the Bowditch Boundary $\partial_{\mc{D}} K$ can be partitioned into parabolic points and conical limit points as in \Cref{D: fine boundary}.  Moreover, since $X_0\oskel$ is a fine hyperbolic graph that witnesses the relative hyperbolicity of $(K,\mc{D})$, parabolic points can be identified with vertices in $X_0\oskel$ and conical limit points can be viewed as points in the visual boundary $\visualBoundary X_{0}\oskel$.  

\begin{proposition}\label{P: hypergraph separation}
Let $p,q\in \partial_{\mc{D}} K$ be a pair of distinct points. Then there exists a hypergraph $Z_e^X$ whose associated hyperset separates $p$ and $q$. 
\end{proposition}

\begin{proof}
The partition of the Bowditch Boundary into parabolic and conical limit points splits our consideration into the following three cases: (1) both points are parabolic, (2) both points are conical limit points, and (3) one point is conical and the other is parabolic.

\textbf{Case 1: $p,q$ are both parabolic points. }

Let $\gamma$ be a geodesic segment in $X_0\oskel$ joining $p$ and $q$. 
By Hypotheses~\ref{H: polygonal wall hypotheses}\eqref{I: many good walls exist}, there exists an edge $e_0$ so that the dual hypergraph $Z_{e_0}^X$ crosses $\gamma$ exactly once. 
Let $H=\Stab_K (Z_{e_0}^X)$.  

In this case, let $Z_e^X = Z_{e_0}^X$. 
Since $Z_{e}^X$ splits $X_{0}\oskel\setminus Z_e^X$ into two components by Hypotheses~\ref{H: polygonal wall hypotheses}\eqref{I: separating 2}, it suffices to show that $p,q\notin\Lambda H$. 
By Hypotheses~\ref{H: polygonal wall hypotheses}\eqref{I: strong full} the intersection of the parabolic stabilizer of $p$ or $q$ with $H$ is finite.
Therefore, both $p$ and $q$ are not contained in the limit set of $H$. 

\textbf{Case 2: $p,q$ are both conical limit points.}

By \Cref{biinfinite-geod} there is a geodesic $\gamma$ in $X_{0}\oskel$ joining $p,q$. 
Let $\mc{S}_\gamma$ be the subset of $\sk(\gamma)$ described in \Cref{properties-skewer}. 
Fix $ R  = R(\delta)$, the uniform fellow-traveling constant of \Cref{unif-fellow-travel}.
We claim that all but finitely many $Z_f^X\in \mc{S}_\gamma$ have $p\notin \Lambda \Stab_K(Z_{f}^X)$. 
Suppose not. Hypotheses~\ref{H: polygonal wall hypotheses}\eqref{I: many good walls exist} implies that there are infinitely many edges whose dual hypergraph crosses $\gamma$ exactly once, and there are finitely many edge orbits. Therefore we may choose $g_1,g_2,\ldots$ elements of $K$ so that $g_1e_0 ,g_2e_0 ,\ldots$ is an infinite collection of edges of $\gamma$ so that 
\begin{enumerate}
\item $Z_{g_ie_0}^X\in \mc{S}_{\gamma}$, and
\item if $H_i = \Stab_K (Z_{g_ie_0}^X)$, then $p\in \Lambda H_i$. 
\end{enumerate}
The $Z_{g_ie_0}^X$ are distinct because they each cross $\gamma$ exactly once. 
Therefore, $\{g_iH\}$ is an infinite collection of cosets of $H$ because the orbit of $g_iH$ intersects $g_ie$.
Parameterize $\gamma:(-\infty,\infty)\to X_{0}\oskel$ so that $\lim_{t\to\infty}\gamma(t) = p$. 
Then there exists $t_i$ so that if $t>t_i$, $\gamma(t) \subseteq \mc{N}_R(Z_{g_ie_0}^X)$. 
Therefore for any $D\ge 0$:
\[ \# \left\{k:\, \diam\left(\bigcap_{i=0}^k \mc{N}_{2R}(Z_{g_ie_0}^X)\right) >D\right\}\]
is unbounded. Since $Z_{e_0}^X$ is $H$-cocompact, the image of $g_iH$ under the orbit map is coarsely $Z_{g_ie_0}^X$.
Thus we obtain a contradiction to the fact that the action of $K$ is acylindrical in the sense of \Cref{def:AQI-cobddSubset}.

Similarly, all but finitely many $Z_f^X\in \mc{S}_\gamma$ do not have $q\in \Lambda \Stab_K(Z_f^X)$. 

Therefore there exists some $Z_e^X\in \mc{S}_\gamma$ so that $p,q\notin \Lambda\Stab_K(Z_e^X)$. 
Since $\gamma$ can only cross $Z_e^X$ once, we see that $p$ and $q$ are separated by $Z_e^X$. 

\textbf{Case 3: $p$ is a conical limit point and $q$ is a parabolic vertex}. 

There exists a geodesic ray $\gamma$ from $q = \gamma(0)$ to $p$ by the definition of the visual boundary.  
By following the same argument in the previous case, there exists an $e$ dual to $\gamma$ so that $p\notin \Lambda\Stab_K(Z_e^X)$ and $Z_e^X$ crosses $\gamma$ exactly once.
The same argument from the first case tells us that $q\notin \Lambda \Stab_K(Z_e^X)$. 
Since $\gamma$ crosses $Z_e^X$ exactly once and $Z_e^X$ separates $X_{0}\oskel$ into two complementary components, $Z_e^X$ separates $p$ and $q$. 
\end{proof}

\begin{proposition}\label{P: hypergraph two-sided carrier}
If $J$ is the associated hyperset carrier of $Z_e^X$, then $J$ has the two-sided carrier property.
\end{proposition}

\begin{proof}
By Hypotheses~\ref{H: polygonal wall hypotheses}\eqref{I: separating 2}, $Z_e^X$ separates $X_{0}$ into two distinct components and therefore $L = Z_e^X \cap X_0\oskel$, the hyperset associated to $Z_e^X$,   naturally separates $X_{0}\oskel$ and $J$ decomposes into two distinct connected pieces $J^+$ and $J^-$ so that $J^+\cap J^- = L$ and $J^+\setminus L,\,J^-\setminus L$ lie in distinct components of $X_{0}\oskel\setminus L$. 
A combinatorial path from one component of $X_{0}\setminus Z_e^X$ must pass through an edge dual to $Z_e^X$, so every path between $J^+$ and $J^-$ must pass through $L$. 
Recall that $J$ is the convex one-skeleton of the carrier of $Z_e^X$ and the cocompactness of the action of $K$ on $X_{0}$ gives a bound $M$ on the number of sides of any polygon in $X_{0}$, so $J^+$ and $J^-$ are $M$--quasiconvex in $X_{0}\oskel$. By construction $J^+\cap J^-= L$. 
\end{proof}


\begin{theorem}\label{T: hypergraph separation}
Let $p,q\in\partial_{\mc{D}}K$ be distinct, then there exists a hypergraph $Z_e^X$ and a finite index full relatively quasiconvex $K_W\le \Stab_G (Z_e^X)$ so that $p$ and $q$ are in $K_W$--distinct components of $\partial_{\mc{D}}K \setminus\Lambda K_W$. 
\end{theorem}

\begin{proof}
By \Cref{P: hypergraph separation} there exists a hypergraph that separates $p$ and $q$. The action of $G$ on $X_0\oskel$ has trivial edge stabilizers, the action is cocompact and every maximal parabolic fixes a vertex.  Therefore, by \Cref{T: separation criterion}, there exists a finite index subgroup $K_W$ of $\Stab_G (Z_e^X)$ so that $p$ and $q$ lie in $K_W$-distinct components of $\partial_{\mc{D}}K\setminus \Lambda K_W$. Full relative quasiconvexity is preserved by taking finite index subgroups, so full relative quasiconvexity of $K_W$ is implied by Hypotheses~\ref{H: polygonal wall hypotheses}. 
\end{proof}

\section{Why $X_{bal}$ has suitable walls}
\label{S: X hypotheses satisfied}


We easily obtain Hypotheses~\ref{H: polygonal wall hypotheses} Items~\eqref{I: is a tree} and \eqref{I: separating 2}. 
Each hypergraph $Z_e^X$ is an embedded tree whose intersection with $X_0\oskel$ is quasi-convex in $X_0\oskel$ by \cite[Theorem~3.10]{MartinSteenbock} showing \eqref{I: is a tree}.   

Moreover, \cite[Proposition~3.14]{MartinSteenbock} shows that each $Z_e^X$ has exactly two complementary components proving (\ref{I: separating 2}).

\subsection{Fullness}
In this subsection, we establish Hypothesis~\ref{H: polygonal wall hypotheses}\eqref{I: strong full} and some more general results that will help us in the blowup case below. 

We first establish that hyperplane stabilizers for relatively geometric actions are full. Proposition~\ref{hyp stab full} will help motivate our technique for proving Proposition~\ref{P: protofull} below and will be used later. 
\begin{proposition}\label{hyp stab full}
Let $(K,\mc{D})$ act relatively geometrically on a CAT(0) cube complex $\tilde{X}$. If $L\le K$ is a hyperplane stabilizer, then $L$ is full. 
\end{proposition}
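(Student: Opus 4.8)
The plan is to use the defining properties of a relatively geometric action to control how a hyperplane stabilizer $L$ interacts with the peripheral subgroups. Recall that $(K,\mc{D})$ acts on $\tilde{X}$ with cell stabilizers either finite or conjugate to a finite-index subgroup of some $D\in\mc{D}$, and each $D\in\mc{D}$ acts elliptically. Fix $D\in\mc{D}$ and $k\in K$, and suppose $H := L\cap D^k$ is infinite; I must show $H$ is finite index in $D^k$. First I would observe that $D^k$ fixes a point $x\in\tilde{X}$ (ellipticity of the conjugate of a peripheral), so $D^k$ fixes a cube $\sigma$ of minimal dimension containing $x$ in its interior, and hence stabilizes $\sigma$; thus $D^k$ is contained in a cell stabilizer $K_\sigma$, and by the relatively geometric hypothesis $D^k$ has finite index in $K_\sigma$ (the only way an infinite subgroup of a cell stabilizer can arise). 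In particular $K_\sigma$ is commensurable to $D^k$.

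Next I would locate a hyperplane. Since $L$ stabilizes a hyperplane $\hat{h}$ and $H\le L$ is infinite, $H$ acts on the carrier $N(\hat{h})$, which is a convex subcomplex and hence itself a CAT(0) cube complex on which $H$ acts by automorphisms preserving $\hat h$. The key point is to produce a cube dual to $\hat h$ that is stabilized by a finite-index subgroup of $D^k$. To do this I would examine the action of the infinite group $H$ on $N(\hat h)$: because $H\le D^k$ is commensurable with a cell stabilizer, $H$ fixes a point of $\tilde X$ and therefore fixes a point of the convex subcomplex $N(\hat h)$ (convex subcomplexes of CAT(0) cube complexes are again CAT(0), and a group fixing a point of the ambient space fixes the projection to the subcomplex). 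Let $\tau$ be the minimal cube of $N(\hat h)$ whose interior contains this fixed point; then $H\le K_\tau$, and $\tau$ meets $\hat h$, so (refining $\tau$ to an edge dual to $\hat h$ in the carrier if necessary, using that $H$ permutes finitely many such edges) a finite-index subgroup $H'\le H$ stabilizes an edge $e$ dual to $\hat h$. Then $H'\le L\cap K_e$, and $K_e$ is a cell stabilizer.

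Finally I would chain the index inequalities. We have $H'\le K_e$, $H'\le D^k$ with $[H:H']<\infty$, and $H$ infinite, so $K_e$ is an infinite cell stabilizer, hence by the relatively geometric hypothesis $K_e$ is conjugate to a finite-index subgroup of some $D'\in\mc{D}$; since $H'\le K_e\cap D^k$ is infinite and $\mc{D}$ is almost malnormal, $D'^{(\cdot)}=D^k$ after adjusting by an element, so $K_e$ is commensurable with $D^k$. Then $H' \le K_e \cap D^k$ has finite index in both $K_e$ and $D^k$ — the first by the local-finiteness/cocompactness of the action on $N(\hat h)$ giving $[K_e:H']<\infty$, the second because $[K_e\cap D^k : H'] <\infty$ and $[K_e:K_e\cap D^k]<\infty$ by commensurability. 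Hence $[D^k:H]\le[D^k:H']<\infty$, which is what fullness requires.

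The main obstacle, I expect, is the middle step: extracting from the infinite group $H=L\cap D^k$ a genuine edge (or cube) dual to the hyperplane $\hat h$ that is stabilized by a finite-index subgroup of $H$. One must be careful that "$H$ stabilizes $\hat h$ and fixes a point of $\tilde X$'' really forces $H$ to stabilize a cube meeting $\hat h$; the cleanest route is to pass to the carrier $N(\hat h)$, use that it is a convex (hence CAT(0)) subcomplex, invoke that an elliptic group fixes a point of any invariant convex subcomplex, and then use that the minimal cube containing that fixed point is both $H$-stabilized and, by definition of the carrier, incident to $\hat h$. After that, the commensurability bookkeeping via almost malnormality of $\mc{D}$ and the relatively geometric cell-stabilizer condition is routine.
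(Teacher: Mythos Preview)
Your overall strategy matches the paper's: use the $\CAT(0)$ nearest-point projection to produce a cell dual to the hyperplane that is stabilised by an infinite subgroup of $D^k$, then invoke the relatively geometric hypothesis and almost malnormality of $\mc{D}$ to finish. Two cosmetic differences: the paper projects the fixed vertex directly onto the hyperplane $W$ (which is convex in the $\CAT(0)$ metric), so the resulting point automatically lies in the interior of a cube dual to $W$ and no ``refining $\tau$'' step is needed; and the paper starts from a vertex fixed by a finite-index subgroup $D_v\le D^k$ rather than an arbitrary fixed point of $D^k$. Neither difference is substantive.

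There is, however, a genuine gap in your final paragraph. The assertion ``$[K_e:H']<\infty$ by local-finiteness/cocompactness of the action on $N(\hat h)$'' is not justified --- no such finiteness is available from the hypotheses, and the subsequent claim ``$[K_e\cap D^k:H']<\infty$'' then has no support. The correct closing step is simpler and is exactly what the paper does: since $e$ is dual to $\hat h$, its stabiliser $K_e$ lies in $L=\Stab_K(\hat h)$. Hence $K_e\cap D^k \le L\cap D^k = H$. You have already argued (via the relatively geometric cell-stabiliser condition and almost malnormality) that $K_e$ is commensurable with $D^k$, so $K_e\cap D^k$ has finite index in $D^k$, and therefore so does $H$. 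No comparison of $H'$ with $K_e$ is needed at all; the inclusion $K_e\le L$ is the missing ingredient.
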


\begin{proof}
Let $W$ be the hyperplane of $\tilde{X}$ stabilized by $L$. Suppose that there exist $k\in K$ and $D\in\mc{D}$ so that $L\cap D^k$ is infinite. 
$K$ acts relatively geometrically, so every maximal parabolic stabilizes a vertex and every cell stabilizer is finite index in a maximal parabolic. 
Then there exists a finite index subgroup $D_v\le D^k$ that stabilizes a vertex so that $D_v\cap L$ is infinite.

\textbf{Claim: an infinite subgroup of $D^k$ fixes an edge $e$ of $\tilde{X}$ dual to $W$.} 
By convexity of $W$, the nearest point projection of $v$ to $W$ with respect to the $\CAT(0)$ metric is well-defined and is a unique point.  Let  $p_0\in W$ denote this point.  Since $p_0\in W$, $p_0$ lies in the interior of some cube $M$ of $\tilde{X}$ that has an edge dual to $W$.
Since $K$ acts on $\tilde{X}$ by isometries, the uniqueness of $p_0$ implies that $(D_v\cap L)$ fixes $p_0$. 
Since $\Stab_K(p_0)$ fixes the cube $M$ setwise, a finite index subgroup $D_1\le \Stab_K(p_0)$ fixes $M$ pointwise. 
Then $D_M = D_1\cap D_v \cap L$ is an infinite subgroup of $D_v\cap L\le D^k$ that fixes $M$ and fixes an edge $e$ dual to $W$. This completes the proof of the claim. 

\textbf{Claim: the stabilizer of $e$ is finite index in $D^k$.} Since $\Stab_K(e)$ is infinite, $\Stab_K(e)$ is finite index in a maximal parabolic. By construction, $\Stab_K(e)$  has infinite intersection with $D^k$. Thus $\Stab_K(e)$ is finite index in $D^k$ because distinct maximal parabolics have finite intersections. This completes the proof of the second claim.

Proposition~\ref{hyp stab full} now follows because the $K$--stabilizer of $e$ is contained in $L$.  
\end{proof}

\begin{figure}[h]
\begin{tikzcd}
D^k \cap L
&	D_v \arrow[r, "f.i."]
&	D^k \\
\Stab_P(v) \cap L \arrow[d, hook] \arrow[u, "f.i."] \arrow[ur, hook]
&	D_v\cap L \arrow[u, hook]
&	L \arrow[u, "f.i."] \\
\Stab_K(p_0)
& D_M \arrow[ul, "f.i."] \arrow[l,hook] \arrow[u, hook] \arrow[ur, hook] 
\arrow[r, hook]
&  \Stab_K(f) \arrow[u, hook] 
\end{tikzcd}
\caption{Commutative diagram depicting the various subgroups considered in the proof of \Cref{hyp stab full}.}
\end{figure}

%
%


The following Proposition now implies Hypotheses~\ref{H: polygonal wall hypotheses}\eqref{I: strong full}:

\begin{proposition}\label{P: protofull}
Let $Z$ be a hypergraph in $X_{bal}$ with stabilizer $H$ and let $v$ be a vertex of $X_{bal}$ with stabilizer $G_v$. 
If $v$ is not contained in $Z$, then $G_v\cap H$ is finite. 
\end{proposition}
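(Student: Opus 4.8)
The plan is to argue by contradiction: an infinite $G_v\cap H$ would be forced to fix an edge or polygon of $X_{bal}$, and all of these have trivial $G$--stabilizer. First reduce. If $G_v$ is finite there is nothing to prove, so assume $G_v$ is infinite. Vertex stabilizers in $X_{bal}$ are conjugates of $A$, $B$, the (finite) apex groups, or trivial, since the complex of groups $G(\mc K)$ has infinite local group only at $u_A$ and $u_B$ (see \Cref{S: complex}); hence $G_v$ is conjugate to $A$ or $B$. Since $G(\mc K)$ also has trivial local group at every edge and every $2$--cell, $G$ acts on the edges and polygons of $X_{bal}$ with trivial stabilizers; in particular $\Stab_G(c)=\{1\}$ for every cell $c$ with $v\in\overline c$ and $\dim c\geq 1$. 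Finally, the carrier of $Z$ is canonically determined by $Z$, so $H$, and hence $G_v\cap H$, preserves $Z$; if $Z$ meets no polygon then $H$ stabilizes an edge and is already trivial, so we may assume $Z$ passes through polygons.

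The key point is that $G_v\cap H$ fixes a point of $Z$. By the $C'(\tfrac16)$ condition — consecutive dual edges are far apart in the polygon they cross (\Cref{far apart}), so a hypergraph cannot close up a circuit — the hypergraph $Z$ is a tree, quasi-isometrically embedded in $X_{bal}$; this is the combinatorial input of \cite{MartinSteenbock}, and for $Z=Z_e^{\EG}$ one uses the embedding of the abstract hypercarrier (\Cref{projected hypercarrier embed}). Because $G_v\cap H$ fixes $v$, the $G_v\cap H$--orbit of any $z_0\in Z$ lies in the metric sphere about $v$ of radius $d(v,z_0)$, hence is bounded in $X_{bal}$ and so bounded in $Z$; as $Z$ is a (locally finite) tree, $G_v\cap H$ fixes a point $z_1\in Z$, for instance the circumcenter of the orbit or the center of its convex hull. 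Since $v\notin Z$, we have $z_1\neq v$.

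Finally, $G_v\cap H$ fixes the two distinct points $v$ and $z_1$. Using that $X_{bal}$ is simply connected and $C'(\tfrac16)$ — so that it carries a $\CAT(0)$ metric, or, absent that, arguing directly in the link of $v$ — the geodesic $[v,z_1]$ (or a canonical path realizing the direction from $v$ to $z_1$) is $(G_v\cap H)$--invariant, and near $v$ it enters the relative interior of a well-defined cell $c$ of $X_{bal}$ with $v\in\overline c$ and $\dim c\geq 1$. Then $G_v\cap H$ fixes points in the relative interior of $c$, hence stabilizes $c$, so $G_v\cap H\leq \Stab_G(c)=\{1\}$, contradicting that $G_v\cap H$ is infinite. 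This proves the proposition.

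The steps I expect to be the main obstacle are the two fixed--point assertions in the non--proper setting: that $Z$ (hence a bounded orbit of $G_v\cap H$ on it) genuinely has a fixed point, which uses the small cancellation structure of hypergraphs and their carriers from \cite{MartinSteenbock}, and that fixing $\{v,z_1\}$ forces $G_v\cap H$ to stabilize a positive--dimensional cell incident to $v$, which needs either a $\CAT(0)$ (or combinatorially convex) structure on $X_{bal}$ or a careful link--based replacement. The remaining ingredients — identifying $G_v$ with a factor, triviality of edge and $2$--cell stabilizers, and the bounded--orbit observation — are immediate from the constructions recalled in \Cref{S: MS complexes}.
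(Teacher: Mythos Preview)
Your strategy—find a fixed point $z_1\in Z$ for $G_v\cap H$ using the tree structure of $Z$, then exploit $z_1\neq v$—is genuinely different from the paper's. The paper first proves an auxiliary lemma (via fineness of $X_{bal}^{(1)}$) showing that only finitely many vertices of the hypercarrier $Y$ realize the minimal distance to $v$; it then splits into the cases $v\notin V(Y)$ and $v\in V(Y)$, in each case exhibiting a polygon or a vertex distinct from $v$ that a finite-index subgroup of $G_v\cap H$ must fix. Your route bypasses both fineness and the case division and is conceptually cleaner.

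That said, your final paragraph has a real gap. You invoke either a $\CAT(0)$ metric on $X_{bal}$ (not established anywhere, and not automatic for $C'(\tfrac16)$ polygonal complexes) or an unspecified link argument to produce an invariant direction from $v$ toward $z_1$; in a non-locally-finite complex with no canonical metric, producing such a direction is exactly the delicate step, and you have not supplied it. The good news is that this detour is unnecessary: once $G_v\cap H$ fixes a point of the tree $Z$ (or inverts a tree-edge), look at that point \emph{as a point of $X_{bal}$}. A fixed tree-vertex is an edge midpoint of $X_{bal}$ (trivial stabilizer), or—only for projected hypergraphs $Z=Z_e^{\EG}$—a vertex $z_1\neq v$ of $X_{bal}$, where $G_v\cap G_{z_1}$ is finite by the same fact you already cite. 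An inverted tree-edge lies in a single polygon of $X_{bal}$, whose setwise stabilizer is finite (it is $\Z/d\Z$, not trivial as you assert—polygons carry the apex group). In every case $G_v\cap H$ embeds in a finite group and you are done. Two minor corrections: $Z$ need not be locally finite (a projected hypergraph can pass through an infinite-valence vertex of $X_{bal}$), but the bounded-orbit fixed-point theorem for trees does not use local finiteness; and the implication ``bounded in $X_{bal}$ $\Rightarrow$ bounded in $Z$'' should be justified via convexity of the carrier's $1$-skeleton from \cite{MartinSteenbock}, which you correctly flag as the relevant input.
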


Recall that as a hypergraph, $Z$ can be either an $X_{bal}$--hyperstructure or a projected hypergraph, the image of an $\EG_{bal}-$hyperstructure under the natural projection $\EG_{bal}\to X_{bal}$. We first prove an auxiliary Lemma:

\begin{lemma}\label{L: hypcar mindist}
Let $Y$ be the hypercarrier of $Z$ in $X_{bal}$, 
let $v$ be a vertex of $X_{bal}$, and 
let $D = \min\{d_{X_{bal}^{(1)}}(z,v):\,z\in Y^{(0) }\}$. Then $S:=\{z\in Y^{(0)}:\,d_{X_{bal}^{(1)}}(z,v) = D\}$ is finite.
\end{lemma}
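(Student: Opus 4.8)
The plan is to exploit the structure of the hypercarrier $Y$ as a \emph{thin chain of polygons}, together with the fact that the hypergraph $Z$ is a quasigeodesic in the hyperbolic graph $X_{bal}^{(1)}$.

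First I would record the combinatorial shape of $Y$. By \Cref{projected hypercarrier embed} the abstract hypercarrier embeds onto $Y$, and by construction it is assembled from polygons glued in pairs along their two dual cells; hence there are an interval $I\subseteq\integers$ and distinct polygons $\{U_k\}_{k\in I}$ of $X_{bal}$ with $Y=\bigcup_{k\in I}U_k$, with $U_k\cap U_{k+1}$ a single closed cell $c_k$ (an edge, or a vertex in the $Z_e^{\EG}$ case), with $U_k\cap U_{k'}=\emptyset$ whenever $|k-k'|\ge 2$, and with the $c_k$ pairwise disjoint (all from the embedding, using that the two dual cells of a polygon are \emph{far apart} (\Cref{far apart}), hence disjoint). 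There is a uniform bound $L$ on the number of sides of a polygon of $X_{bal}$ because the $G$--action on $X_{bal}$ is cocompact; in particular $\#V(U_k)\le L$ for every $k$, so if $I$ is finite then $M\subseteq\bigcup_{k}V(U_k)$ is finite and we are done. Assume henceforth that $I$ is infinite.

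The key input is that the inclusion $Y^{(1)}\hookrightarrow X_{bal}^{(1)}$, with $Y^{(1)}$ carrying its induced path metric, is a quasi-isometric embedding; equivalently, $Z$ is a quasigeodesic in the hyperbolic graph $X_{bal}^{(1)}$ (\Cref{CCrestatement}), which is part of the wallspace analysis of \cite{MartinSteenbock} and reflects exactly the \emph{far apart} condition the balanced subdivision was chosen to guarantee. Now any edge-path in $Y^{(1)}$ from a vertex of $U_k$ to a vertex of $U_{k'}$ (say $k<k'$) must pass through a vertex of each of the $k'-k$ pairwise disjoint closed cells $c_k,\dots,c_{k'-1}$, since removing $c_j$ disconnects $\bigcup_{i\le j}U_i$ from $\bigcup_{i>j}U_i$; hence such a path has length at least $k'-k-1$. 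Combining this with the quasi-isometric embedding yields constants $\lambda\ge 1$ and $\epsilon\ge 0$ with
\[
d(x,x')\ \ge\ \tfrac1\lambda\,|k-k'|-\epsilon\qquad\text{for all } x\in V(U_k),\ x'\in V(U_{k'}).
\]

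Finally I would derive the contradiction. Suppose $M$ is infinite. As $\#V(U_k)\le L$, the set $S:=\{k\in I: V(U_k)\cap M\ne\emptyset\}$ is infinite, hence unbounded in $\integers$. Given $k,k'\in S$, pick $z\in V(U_k)\cap M$ and $z'\in V(U_{k'})\cap M$; then $d(z,z')\le d(z,v)+d(v,z')=2D$, while the displayed inequality gives $d(z,z')\ge\tfrac1\lambda|k-k'|-\epsilon$. Therefore $|k-k'|\le\lambda(2D+\epsilon)$ for all $k,k'\in S$, contradicting the unboundedness of $S$; so $M$ is finite. The hard part is the ``key input'' of the third paragraph: controlling $X_{bal}$--distance by the chain parameter is where hyperbolicity of $X_{bal}^{(1)}$ and the $C'(\tfrac16)$ condition genuinely enter, and it is the only step that appeals to \cite{MartinSteenbock} rather than to the elementary chain bookkeeping of the second paragraph.
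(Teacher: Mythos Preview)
Your argument has a substantive gap in its first structural step. You assert that the abstract hypercarrier is ``assembled from polygons glued \emph{in pairs} along their two dual cells,'' and then parameterize the carrier as a linear chain $\{U_k\}_{k\in I}$ with $I\subseteq\integers$. This is not justified: the hypergraphs $Z$ under consideration are embedded \emph{trees} in $X_{bal}$ (this is what \cite[Proposition~3.14 and Lemma~3.32]{MartinSteenbock} actually give), and they can genuinely branch. For $Z=Z_e^X$, a dual edge $f$ of $X_{bal}$ may lie in more than two polygons (edge stabilizers are trivial, but nothing forces the link of an edge to have size $\le 2$), so the carrier is a tree of polygons rather than a chain. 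For $Z=Z_e^{\EG}$ the situation is worse: the dual ``cells'' can be vertices of $X_{bal}$ with infinite stabilizer, and infinitely many polygons of the carrier may all meet at such a vertex. In that case the tree of polygons is not even locally finite. Your deduction ``$S$ infinite $\Rightarrow$ $S$ unbounded in $\integers$'' therefore fails; with tree indexing it would require local finiteness of the tree, which you do not have in general. Relatedly, calling $Z$ a ``quasigeodesic'' presupposes it is a line; what is true (and what you need) is only that $Y^{(1)}$ is combinatorially convex in $X_{bal}^{(1)}$.

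The paper's proof avoids this obstacle entirely by exploiting a feature you never use: \emph{fineness} of $X_{bal}^{(1)}$ (\Cref{P: fine hyp graph}). The paper argues by induction on $D$; if $M$ were infinite one produces, using convexity of $Y^{(1)}$ to control lengths, infinitely many circuits of length $\le 4D$ all passing through a fixed edge, contradicting fineness. Fineness is precisely the tool that compensates for the local infiniteness of $X_{bal}^{(1)}$ at vertices with large stabilizer, and your approach via a putative chain/locally-finite structure cannot substitute for it in the $Z_e^{\EG}$ case.
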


\begin{proof}
We proceed by induction on $D$. When $D = 0$, the statement is obvious. 

Suppose toward a contradiction that $S$ is infinite. 
Choose a distinguished vertex $x\in S$ and	$\gamma$ a geodesic path in $X_{bal}^{(1)}$ from $v$ to $x$. 
Let $S \smallsetminus \{x\} = \{x_1,x_2, \dots \}$ be an enumeration\footnote{Recall $X_{bal}^{(1)}$ is a subdivided quotient of a tree with countably many vertices and therefore has countably many vertices.} of vertices distinct from $x$. 
Choose geodesic paths $\gamma_i$ in $X_{bal}^{(1)}$ from $v$ to $x_i$ with the following property: if $v_1$ and $v_2$ are vertices of $\gamma_i$ that are also vertices of $\gamma$, then $\gamma_i$ and $\gamma$ follow the same path between $v_1$ and $v_2$. 
If all but finitely many $\gamma_i$ leave $v$ by following the same edge, then there exists a vertex $v'$ so that $d_{X_{bal}^{(1)}}(v',Y^{(0)}) = D-1$ and there are infinitely many vertices in $Y$ that realize the minimal distance $D-1$. 
By induction, we obtain a contradiction.

Therefore, by possibly passing to an infinite subset of the $\gamma_i$, we may assume the $\gamma_i$ only intersect at $v$. 
Note that $\gamma_i\cap Y=\{x_i\}$ and $\gamma\cap Y = \{x\}$ because $x,x_i\in S$. 
Then there exists a path $\sigma_i\subseteq Y\cap X_{bal}^{(1)}$ from $x$ to $x_i$ of length at most $2D$ because $Y^{(1)}$ is convex. 
Then the paths $\rho_i = \gamma_i\cup \sigma \cup \gamma$ are circuits based at $v$ in $X_{bal}^{(1)}$ of length at most $4D$ that include the edges of $\gamma$. There are infinitely many $\rho_i$ which violates the fineness of $X_{bal}^{(1)}$. 
\end{proof}

\begin{proof}[Proof of \Cref{P: protofull}]
Recall that $Y$ denotes the hypercarrier of $Z$ and $H = \Stab_{G}(Z)$. Let $H_v := G_v \cap H$.  First, assume that $v\notin Y^{(0)}$. Then there exists a vertex $x\ne v$ so that $x\in Y^{(0)}$ minimizes the (1--skeleton) distance from $Y$ to $v$ by \Cref{L: hypcar mindist}. 

We claim that $H_v$ is commensurable to a subgroup of $G_v \cap G_x$.  Indeed, by \Cref{L: hypcar mindist} the $G_v$ orbit of $x$ is finite.  Hence, $H_v$ has a finite index subgroup which stabilizes $x$.  Stabilizers of distinct vertices in $X_{bal}$ have trivial intersection because the local group at each edge is trivial. Hence, $H_v$ is finite.  


It remains to show the case where $v\in Y^{(0)}$. Since $Z$ does not contain $v$, there exists a polygon $U\subseteq Y$ so that $v \in U$ and the interior of $U$ intersects $Z$. 

If $U$ is the unique polygon of $Y$ containing $v$ then $H_v$ stabilizes $U$.  
Moreover, $H_v$ acts on $U$ as a subgroup of a finite dihedral group, so a finite index subgroup fixes $U$ pointwise.  
In particular, this finite index subgroup fixes an edge, so $H_v$ is finite because edge stabilizers are trivial.

Hence, suppose there are multiple polygons of $Y$ that contain $v$. Let
\[\mc{U} = \{U:\,\text{U is a polygon of $Y$ that contains $v$}\}.\]

\textbf{Claim: if $U\in \mc{U}$, then $\partial U$ has at most two edges dual to $Z$.} The abstract carrier of $Z$ embeds by Proposition~\ref{projected hypercarrier embed}. The abstract carrier has one copy of $U$ for every pair of diametrically opposed edges dual to $Z$. Since the natural map from the abstract carrier to $Y$ is injective on the interior of polygons, there can only be one polygon of the abstract carrier whose interior maps to the interior of $U$. 

\textbf{Claim: there is a path $\sigma$ from $v$ to $Z$ so that if $U\in \mc{U}$, then $\partial U$ contains $\sigma$.} 
Let $U_1,U_2\in \mc{U}$ with $U_1\ne U_2$. Since the abstract carrier of $Z$ embeds in $X_{bal}$, the copies of $U_1,U_2$ in the abstract carrier must intersect because $v\in U_1\cap U_2.$ The construction of the abstract carrier (see \Cref{D:hyperstructure}) requires that if the copies of $U_1,U_2$ intersect, then $U_1,U_2$ must share an edge $e$ dual to $Z$ and a path $\sigma$ without backtracking from $e$ to $v$ lies in $\partial U_1\cap \partial U_2$. 

Since $U_1$ may have two edges dual to $Z$, there at most two possibilities for $\sigma$. However, $\sigma$ is a piece, so $|\sigma|\le \frac16 |\partial U_1|$ by the $C'(1/6)$ condition. Therefore, there is only one choice for $\sigma$. 

\textbf{Claim: if $h\in H_v$, then $h\cdot \sigma = \sigma$.} Recall that the action of $G$ on $X_{bal}$ takes polygons to polygons. 	Observe that $H_v = G_v\cap H$ fixes $Z$ setwise and fixes $v$. Therefore, if $U\in \mc{U}$, then $H_v\cdot U \in \mc{U}$. Since $H_v$ fixes $Z$ and $v$, $h\cdot \sigma$ is a path from $v$ to $Z$ of length at most $\frac16|\partial U|$. Since $U$ has two edges dual to $Z$, $h\cdot \sigma$ must be $\sigma$. This completes the proof of the claim.

Finally, since any $h\in H_v$ must fix $\sigma$ pointwise and $v\notin Z$, $H_v$ fixes an edge of $X_{bal}$ pointwise. However, edge stabilizers of $X_{bal}$ are trivial, so we conclude $H_v$ is trivial.  
%
%
\end{proof}

Recall that a fiber complex is the preimage of a vertex of $X_{bal}$ under the projection. 
\begin{lemma}\label{L: hypergraph vertex stabilizer}
Let $H = \Stab_G(W_e^{\EG})$, let $G_v$ be the stabilizer of a vertex $v$ of $X$ and let $EG_v$ be the associated fiber complex. If $W_e^{\EG}$ intersects $EG_v$, then:
\begin{enumerate}
\item $W_e^{\EG}\cap EG_v$ is a hyperplane.
\item $H\cap G_v$ is the stabilizer of a hyperplane of $EG_v$. 
\end{enumerate}
\end{lemma}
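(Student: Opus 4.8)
The statement concerns a hyperstructure $W_e^{\EG}$ of $\EG_{bal}$ coming from an edge $e$ of a fiber complex, and its intersection with a fiber complex $EG_v$. The key structural fact is that $W_e^{\EG}$ is built entirely from midcubes of cubes in the fiber complexes and straight segments through centers of polygons of $\EG_{bal}$, and the polygons of $\EG_{bal}$ connect different fiber complexes. So the intersection $W_e^{\EG}\cap EG_v$ should consist exactly of the midcubes of $W_e^{\EG}$ that lie inside the cube complex $EG_v$. The plan is to identify this intersection with the hyperplane of $EG_v$ dual to (a suitable edge of) $EG_v$ in the equivalence class $[e']_{\sim_{opp}}$, and then deduce the statement about stabilizers.

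\textbf{Step 1: Locate an edge of $EG_v$ in the class of $e$.} First I would argue that if $W_e^{\EG}$ meets $EG_v$, then the $\sim_{opp}$--class $[e]$ contains an edge $e'$ of the cube complex $EG_v$. Indeed, $W_e^{\EG}$ is a union of midcubes of cubes of $\EG_{bal}$ and straight segments through polygon centers; if it meets $EG_v$ it must contain a midcube of a cube $C$ of $\EG_{bal}$ with $C\subseteq EG_v$ (the only cubes of dimension $\geq 2$ meeting the interior of $EG_v$ lie in $EG_v$, since polygons of $\EG_{bal}$ attach to fibers only along edges). Any such midcube is dual to an edge $e'$ of $EG_v$, and $e'\sim_{opp}e$. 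Note $e$ itself is an edge of some fiber complex by hypothesis, so there is no subtlety about $e$ lying on a polygon that leaves the fiber.

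\textbf{Step 2: Identify the intersection with a hyperplane of $EG_v$.} Next I would show $W_e^{\EG}\cap EG_v = \hat W$, the hyperplane of the CAT(0) cube complex $EG_v$ dual to $e'$. One inclusion: the hyperplane $\hat W$ is the union of midcubes dual to edges of $EG_v$ in the $\sim_{opp}$--class of $e'$ \emph{computed inside $EG_v$}; since $EG_v$ is a full subcomplex (no polygons of $\EG_{bal}$ have their interior inside $EG_v$), the relation $\sim_{opp}$ restricted to edges of $EG_v$ agrees with $\sim_{opp}$ in $EG_v$ as a cube complex, so every such midcube lies in $W_e^{\EG}$ — giving $\hat W\subseteq W_e^{\EG}\cap EG_v$. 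Conversely, $W_e^{\EG}\cap EG_v$ is a union of midcubes of cubes of $EG_v$ (the straight polygon segments leave $EG_v$ immediately as noted in Step 1), each dual to an edge of $EG_v$ that is $\sim_{opp}$--equivalent to $e$ and hence, by the fullness of $EG_v$, $\sim_{opp}$--equivalent to $e'$ within $EG_v$; so that midcube lies in $\hat W$. This proves (1), since a hyperplane of a CAT(0) cube complex is connected and $\hat W$ is exactly such.

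\textbf{Step 3: Deduce the statement about stabilizers.} For (2), note first that $G_v$ preserves $EG_v=p^{-1}(v)$, so $G_v$ acts on $EG_v$, and $H\cap G_v$ preserves $W_e^{\EG}$ and $EG_v$, hence preserves $W_e^{\EG}\cap EG_v=\hat W$; thus $H\cap G_v\subseteq \Stab_{G_v}(\hat W)$. For the reverse, any element of $G_v$ stabilizing $\hat W$ as a subset of $EG_v$ fixes $\hat W=W_e^{\EG}\cap EG_v$ setwise; since $W_e^{\EG}$ is the unique hyperstructure of $\EG_{bal}$ meeting $EG_v$ in $\hat W$ — because distinct hyperstructures of the cube complex $EG_v$ are the intersections with $EG_v$ of distinct hyperstructures of $\EG_{bal}$, as the $\sim_{opp}$--classes intersecting $EG_v$ in a given hyperplane are unique by Step 2 — any $g\in G_v$ with $g\hat W=\hat W$ satisfies $g W_e^{\EG}\cap EG_v = W_{ge}^{\EG}\cap EG_v$, and $ge\sim_{opp}e$ forces $gW_e^{\EG}=W_e^{\EG}$, i.e. $g\in H$. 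Hence $H\cap G_v=\Stab_{G_v}(\hat W)$, a hyperplane stabilizer of $EG_v$.

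\textbf{Main obstacle.} The delicate point is the uniqueness argument underlying both Step 2 and Step 3: one must be careful that $\sim_{opp}$ computed in the ambient complex $\EG_{bal}$ and $\sim_{opp}$ computed in the subcomplex $EG_v$ genuinely agree on edges of $EG_v$ — equivalently, that no chain of opposite-edge identifications leaves $EG_v$ through a polygon and returns along a different edge of $EG_v$. This follows because polygons of $\EG_{bal}$ are attached to fiber complexes only along single boundary edges (never along two-cells), so a $\sim_{opp}$ chain through a polygon starts and ends at diametrically opposite edges of that polygon, which lie in \emph{different} fiber complexes (recall even/odd vertices of the polygon go to different factor vertex spaces); hence such a chain never re-enters the same $EG_v$. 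Making this precise using the attaching-map description from \cite{MartinSteenbock} and the fact that edge stabilizers in $X$ are trivial is the crux of the argument.
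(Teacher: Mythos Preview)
Your treatment of part (2) is essentially the same as the paper's: both directions of the equality $H\cap G_v = \Stab_{G_v}(\hat W)$ follow exactly as you say, using the uniqueness of the hyperstructure meeting $EG_v$ in a given hyperplane.

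For part (1), however, your argument has a genuine gap, precisely at the point you flag as the main obstacle. You need to rule out a $\sim_{opp}$--chain that leaves $EG_v$ through a polygon and later \emph{returns} to $EG_v$ along an edge not in the hyperplane $\hat W$. Your justification (``diametrically opposite edges of that polygon lie in different fiber complexes \ldots hence such a chain never re-enters the same $EG_v$'') only addresses a \emph{single} polygon step. It says nothing about a longer chain $EG_v \to EG_{v_1}\to EG_{v_2}\to\cdots\to EG_v$ that passes through several polygons and intermediate fiber complexes before returning. Nothing in the local attaching data forbids this; ruling it out is a global statement about how the hyperstructure sits in $\EG_{bal}$. (Your parenthetical about even/odd vertices also refers to the unsubdivided, pre-blow-up polygon, and does not transparently survive the subdivision of $\EG$ to $\EG_{bal}$.)

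The paper closes exactly this gap by projecting to $X_{bal}$: if $W_e^{\EG}\cap EG_v$ contained two hyperplanes, then since $W_e^{\EG}$ is connected there would be a path in $W_e^{\EG}$ between them; its image in the projected hypergraph $Z_e^{\EG}$ would be a nontrivial loop based at $v$. But $Z_e^{\EG}$ is an embedded tree in $X_{bal}$ by \cite[Lemma~3.32]{MartinSteenbock}, a contradiction. This global tree property is exactly the missing ingredient in your direct combinatorial approach, and invoking it is both shorter and avoids the delicate analysis of subdivided polygon boundaries.
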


\begin{proof}
For the first claim, one might worry that $W_e^{\EG}$ might intersect $\EG_v$ in more than one hyperplane, but then the projection $W_e^{\EG}\to Z_e^{\EG}$ would contain a loop. By \cite[Lemma 3.32]{MartinSteenbock}, $Z_e^{\EG}$ is an embedded tree in $X_{bal}$. This proves the first claim.

Let $W_v = W_e^{\EG}\cap EG_v$ so that $W_v$ is a hyperplane of $EG_v$. If $h\in H\cap G_v$, then $h\cdot W_v$ is a hyperplane of $EG_v$, so $h\cdot W_v = W_v$. Therefore $H\cap G_v\subseteq \Stab_{G_v}(W_v)$.
Similarly, if $h'\in \Stab_{G_v}(W_v)$, then the intersection of $h'\cdot W_e^{\EG}$ and $EG_v$ is $W_v$. Then $h'\cdot W_e^{\EG} = W_e^{\EG}$ because $h'\cdot W_e^{\EG}$ is an $\EG$--hyperstructure and $W_e^{\EG}$ is the unique hyperstructure that intersects $EG_v$ in the hyperplane $W_v$. 
Therefore, $\Stab_{G_v}(W_v)\subseteq G_v\cap H$. 
\end{proof}

\begin{proposition}\label{wall stab full}
Let $H=\Stab_G(W)$ where $W = Z_e^X$ or $W = W_e^{\EG}$. Then $H$ is full in $(G,\mc{P})$.
\end{proposition}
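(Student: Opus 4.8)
\textbf{Proof proposal for Proposition~\ref{wall stab full}.}

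The plan is to treat the two cases $W = Z_e^X$ and $W = W_e^{\EG}$ in parallel, since in both cases $W$ determines a hypergraph $Z$ in $X_{bal}$ (namely $Z_e^X$ itself, or the projected hypergraph $Z_e^{\EG}$), and $\Stab_G(W)$ maps to $\Stab_G(Z)$. Fullness is tested against the peripheral structure $(G,\mc{P})$, so I need to show: for every $P \in \mc{P}$ and $g \in G$, if $H \cap P^g$ is infinite then it has finite index in $P^g$. The key structural fact to exploit is that each peripheral $P^g \in \mc{P}$ is contained in a fiber group $G_v$ for a unique vertex $v$ of $X_{bal}$ — indeed $P$ is a peripheral of one of the factor groups $A$ or $B$, so $P^g \le A^g$ or $P^g \le B^g$, and $A^g = G_{v}$ for the vertex $v$ fixed by $A^g$ (similarly for $B$). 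Since $H \cap P^g \le H \cap G_v$, it suffices to understand $H \cap G_v$ and how it meets $P^g$.

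First I would dispose of the case where the vertex $v$ (with $G_v \supseteq P^g$) does \emph{not} lie on $Z$. By Proposition~\ref{P: protofull}, $G_v \cap H$ is finite in that situation, so $H \cap P^g \le G_v \cap H$ is finite, contradicting the hypothesis that it is infinite; hence this case cannot occur under the assumption. So I may assume $v \in Z$, equivalently $W$ passes through the fiber over $v$. Now I split according to the type of $W$. If $W = W_e^{\EG}$, then by Lemma~\ref{L: hypergraph vertex stabilizer}, $W_e^{\EG} \cap EG_v$ is a hyperplane of the fiber complex $EG_v$ and $H \cap G_v = \Stab_{G_v}(W_v)$ is precisely the stabilizer in $G_v$ of that hyperplane $W_v$. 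Since $(G_v, \mc{P}_v)$ acts relatively geometrically on $EG_v$ (where $\mc{P}_v$ is the induced peripheral structure $\mc{P}_A^g$ or $\mc{P}_B^g$), Proposition~\ref{hyp stab full} tells us that $H \cap G_v$ is full in $(G_v, \mc{P}_v)$. Therefore, if $H \cap P^g = (H \cap G_v) \cap P^g$ is infinite, fullness of $H \cap G_v$ in $(G_v,\mc{P}_v)$ — noting $P^g$ is exactly a peripheral of $G_v$ in $\mc{P}_v$ — forces $H \cap P^g$ to be finite index in $P^g$, as desired.

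For the remaining case $W = Z_e^X$: here $Z = Z_e^X$ and, by the defining properties of the subdivision $X_{bal}$, the hypergraph $Z_e^X$ intersects the boundary of each polygon only in midpoints of edges and never in a vertex (this is what distinguishes $Z_e^X$ from a projected hypergraph $Z_e^{\EG}$; cf. the remark preceding Definition of the hypercarrier $V_e^{\EG}$). So if $v$ were a vertex of $X_{bal}$ that lies \emph{on} $Z_e^X$, that would be impossible — $Z_e^X$ contains no vertices of $X_{bal}$. Thus the case $v \in Z$ is vacuous for $W = Z_e^X$, and by the first paragraph's reduction $H \cap G_v$, hence $H \cap P^g$, is finite whenever $v$ carries a peripheral; so the fullness condition is satisfied vacuously (its hypothesis is never met). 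Combining the cases: in every situation where $H \cap P^g$ is infinite, it is finite index in $P^g$, so $H$ is full in $(G,\mc{P})$.

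The main obstacle I anticipate is making the reduction ``every $P^g \in \mc{P}$ sits inside a unique fiber group $G_v$, and matches a peripheral of the induced structure on $G_v$'' fully rigorous: one must check that $P^g$ is not merely contained in $G_v = A^g$ but is genuinely conjugate into $\mc{P}_A$ (i.e.\ is one of the peripherals of the induced structure, not a random subgroup), and that the vertex $v$ is the \emph{only} vertex whose stabilizer contains $P^g$ with infinite intersection — this uses that distinct vertex stabilizers in $X_{bal}$ intersect trivially (edge local groups are trivial, as used in Proposition~\ref{P: protofull}) together with malnormality properties of the factor peripherals inside $A$ and $B$. Once that bookkeeping is in place, the two cases are essentially immediate from Propositions~\ref{hyp stab full}, \ref{P: protofull} and Lemma~\ref{L: hypergraph vertex stabilizer}.
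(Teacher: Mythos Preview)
Your proposal is correct and follows essentially the same approach as the paper: both arguments reduce to Proposition~\ref{P: protofull} when the relevant hypergraph misses the vertex $v$, and to Lemma~\ref{L: hypergraph vertex stabilizer} together with Proposition~\ref{hyp stab full} when $W_e^{\EG}$ meets the fiber $EG_v$, with the $Z_e^X$ case handled by observing that $Z_e^X$ contains no vertices. The only cosmetic difference is organizational --- you unify the two cases via the associated hypergraph $Z$ and split on whether $v\in Z$, whereas the paper treats $W=Z_e^X$ and $W=W_e^{\EG}$ separately --- and your worry about the bookkeeping for ``$P^g$ lies in a unique fiber group'' is legitimate but routine, and the paper glosses over it just as you suspect.
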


\begin{proof}
First suppose that $W = Z_e^X.$ Since $W$ contains no vertices, $|G_v\cap H|<\infty$ for all vertices $v$ of $X_{bal}$ by \Cref{P: protofull}. 
Every peripheral subgroup is contained in a fiber group, so $H$ has finite intersection with any peripheral subgroup.

Now suppose $W = W_e^{\EG}$.

If $W$ intersects $EG_v$, then \Cref{L: hypergraph vertex stabilizer} implies that the $G_v\cap H$ stabilizes a hyperplane of $EG_v$. 
Hyperplane stabilizers in $EG_v$ are full (\Cref{hyp stab full}), so for any peripheral subgroup $P \in \mc{P}_{v}$ contained in $G_v$, $H\cap P$ is either finite or finite index in $P$. 

Observe that $H$ stabilizes $Z_e^{\EG}$, a (projected) hypergraph in $X_{bal}$. Recall $Z_e^{\EG}$ intersects a vertex $v$ of $X_{bal}$ if and only if $W$ intersects $EG_v$. Therefore, if $v$ is a vertex of $X_{bal}$ so that $W$ does not intersect $EG_v$ then by \Cref{P: protofull}:
\[|G_v\cap H| \le |G_v \cap \Stab_G (Z_e^{\EG})| <\infty \]
Since every peripheral subgroup is contained in a fiber group, it follows that $H$ is full in $(G,\mc{P})$.
\end{proof}



\subsubsection{Infinite geodesics in $X_{bal}$ have many dual hypergraphs that cross exactly once.}

The main result of this subsection is:
\begin{proposition} \label{P: onecross}
Let $\gamma\subseteq X_{bal}\oskel$ be a combinatorial geodesic in $X_{bal}\oskel$. There exists an edge $e$ of $\gamma_0$ so that the hypergraph $Z_e^X$ intersects $\gamma$ exactly once. Further, if $\gamma$ is infinite, there exists $N\in\naturals$ so that every collection of at least $N$ consecutive edges contains an edge $e$ so that $Z_e^X$ crosses $\gamma$ exactly once. 
\end{proposition}

Before proving \Cref{P: onecross}, we introduce some facts about small cancellation, and prove Lemmas~\ref{L: bad walls follow geodesics} and~\ref{L: bad wall polygon} which will be used in the proof of \Cref{P: onecross}. 

\begin{lemma}\label{L: bad walls follow geodesics}
Let $\gamma$ be a geodesic in $X\oskel$ containing an edge $e$ and let $Z_e^X$ be the dual hypergraph. 
Suppose $\gamma$ is dual to $Z_e^X$ at another edge $e'$ of $\gamma$. Let $\gamma_e^X$ be the unique path in $Z_e^X$ between $e$ and $e'$ that does not backtrack. 
Then there is a sequence of polygons $U_1,U_2,\ldots,U_m$ of  $X_{bal}$ so that:
\begin{enumerate}
\item $U_i$ has two edges dual to $\gamma_e^X$ (equivalently $\gamma_e^X$ intersects the interior of $U_i$),
\item the subpath of $\gamma$ from $e$ to $e'$ including these two edges is contained in $\bigcup_{i=1}^m \partial U_i$, and
\item $\gamma\cap \partial U_i\ne\emptyset$ for all $1\le i\le m$. 
\end{enumerate}
\end{lemma}

\begin{proof}
Following $\gamma_e^X$ from $e$ to $e'$, we label the sequence of polygons that $\gamma_e^X$ passes through the interiors of as $U_1,U_2,\ldots,U_n$ (so that each $U_i$ has two edges dual to $\gamma$). 
The collection $U_1,U_2,\ldots,U_n$ are a \textbf{gallery} in the sense of \cite[Definition 3.7]{MartinSteenbock} (where the \textbf{doors} are the edges dual to $\gamma_e^X$). The union of the $U_i$ in $X_{bal}$ is the image of the associated hypercarrier in the sense of \cite[Definition 3.8]{MartinSteenbock}. 
Then \cite[Proposition A.16]{MartinSteenbock} implies that $\gamma$ lies in $\bigcup_i \partial U_i$. 

Let $e = e_0,e_1,\ldots,e_k=e'$ be the edges of $\gamma$ in the interval between and including $e,e'$. 

\textbf{Claim: if $U,V$ are distinct polygons of the carrier of $Z_e^X$ so that $e_{i}\subseteq \partial U$ and $e_{i+1}\subseteq \partial V$, then there is an edge in $U\cap V$ that is dual to $Z_e^X$}. 
In the abstract carrier of $Z_e^X$, there are polygons $\bar{U}$, $\bar{V}$ containing edges $\bar{e}_i$ and $\bar{e}_{i+1}$ respectively so that the natural map carries $\bar{U},\bar{V},\bar{e}_i,\bar{e}_{i+1}$ to $U,V,e_i,e_{i+1}$ respectively. 
Since the abstract carrier embeds (recall Proposition~\ref{projected hypercarrier embed}), $\bar{e}_i$ and $\bar{e}_{i+1}$ must intersect in a vertex because their images $e_i,e_{i+1}$ share a vertex. 
By construction of the abstract carrier, any two polygons that have nontrivial intersection in the abstract carrier of $Z_e^X$ share an edge dual to $Z_e^X$. This proves the claim. 

We may assume that $U_1,U_2,\ldots,U_n$ are listed in the sequence they appear by  traversing $\gamma_e^X$ from $e$ to $e'$ (recalling that $\gamma_e^X$ is a subpath of a tree that does not backtrack). Next we claim that if $U_i\cap U_j \ne \emptyset$, then $j\in \{i-1,i,i+1\}$. For $i<n$ $U_i,U_{i+1}$ intersect in an edge dual to $Z_e^X$ and if $j\ne i,i+1$, $U_j$ cannot contain the same dual edge or $\gamma_e^X$ would not be a subpath of an embedded tree. 
Similarly, for $j\ne i,i-1$ and $i>1$, $U_j$ cannot contain the edge of $\gamma_e^X$ dual to edges of $U_i,U_{i-1}$. Then the preceding claim implies that among $U_1,\ldots,U_n$, $U_i$ may only intersect itself and $U_{i\pm 1}$. 

Thus a path from $U_1$ to $U_n$ must intersect every $U_i$ for $1\le i\le n-1$. 
\end{proof}

\begin{lemma}\label{L: bad wall polygon}   
Let $\gamma$ be a geodesic in $X\oskel$. 
Let $\gamma_e^X$ be an embedded geodesic path in $Z_e^X$ between edges $e$ and another distinct edge $e'$ on $\gamma$ and let $U$ be a polygon containing $e$ where $\gamma_e^X$ is dual to $e$ and a second edge $\hat{e}$ of $\partial U$. Then
\begin{enumerate}
\item \label{ineq: bad wall large intersection}
the edge $\hat{e}$ satisfies $d_{X_{bal}\oskel}(\hat{e}, \gamma \cap \boundary U) < \frac16 |\boundary U| - 1$, and 
\item \label{ineq: geodesic carrier polygon}
$\frac13 |\partial U| < |\gamma\cap \partial U| \le \frac12 |\partial U|$. 
\end{enumerate}
\end{lemma}

\begin{proof}
By \Cref{L: bad walls follow geodesics} the path $\gamma_e^X$ lies entirely in polygons whose boundary intersects $\gamma$. Therefore, $\gamma_e^X$ must exit $U$ at $\hat{e}$  through a piece of $\boundary U$ that intersects $\gamma$, so the first claim follows immediately by the $C'(\frac16)$ condition. 

Since $\gamma$ is a geodesic, we have the upper bound $|\gamma \cap 
\partial U|\le \frac12|\partial U|$. 
For the lower bound in \Cref{ineq: geodesic carrier polygon}, $e$ lies on $\gamma$ and $e,\hat{e}$ are diametrically opposed, so $d_{X_{bal}\oskel}(e, e') \geq \frac12|\boundary U| - 1$.  
With the first claim, 
\[
|\gamma\cap \boundary U| \geq  |e| + d_{X_{bal}\oskel}(e, \hat{e}) - d_{X_{bal}\oskel}(\hat{e}, \gamma\cap \boundary U) > 1+ \tfrac12 |\boundary U| - (\tfrac16|\boundary U| - 1) > \tfrac13|\partial U|.
\qedhere
\] 
\end{proof}

\begin{proof}[Proof of \Cref{P: onecross}]
Let $f$ be an edge of $\gamma$.
If $Z_f^X$ intersects $\gamma$ exactly once, we are done.
Thus, suppose $f' \subset \gamma$ is an edge such that $f \neq f'$ and $Z_f^X$ intersects $f'$ nontrivially.
Since $f$ and $f'$ are distinct, the definition of a hypergraph implies there exists a polygon $U$ in the carrier of $Z_f^X$ with $f \subset \boundary U$.   
We will use $\gamma \cap \boundary U$ to detect a hypergraph crossing $\gamma$ exactly once.  

Order edges of $\gamma$ and label them consecutively by $e_i$ for $i \in \Z$ such that $\gamma \cap \boundary U = e_1 \cup \dots \cup e_k$.  
By \Cref{L: bad wall polygon},  we have that  $\frac13 |\boundary U| \le k \leq \frac12 |\boundary U|$.  
Hence, we may choose $i$ between $1$ and $k$ such that
\begin{equation}
\tag{$\star$}
\label{ineq: middle edge}
\tfrac16 |\boundary U| - 1 < \min \{ d_{X_{bal}\oskel}(e_i, e_1) \, , \, d_{X_{bal}\oskel}(e_i, e_k). \}
\end{equation}
In particular, every piece of $\boundary U$ containing $e_1$ or $e_k$ is disjoint from $e_i$ by the $C'(\frac16)$ condition.
Let $e := e_i$ for the above choice of $i$ and let $Z_e^X$ be the hypergraph through $e$.
We will see that $Z_e^X$ only crosses $\gamma$ at a single point (the midpoint of $e$).

Assume for contradiction that there exists an edge $e' \subset \gamma$ with $e' \neq e$  and let $\gamma_e^X \subseteq Z_e^X$ be a geodesic segment joining the midpoints of $e$ and $e'$.  
Let $\check{U}$ denote the polygon with $e\subseteq \partial\check U$ so that the interior of $\check U$ has nontrivial intersection with $\gamma_e^X$.  
We now prove $\check{U} \neq U$.   
Indeed, let $\hat{e} \subset \boundary U$ be the edge opposite to $e$ in $\boundary U$.  
Clearly, $\hat{e} \cap \gamma = \varnothing$, and the inequality in \Cref{ineq: middle edge} immediately implies the same inequality with $e$ replaced by $\hat{e}$.  
Hence, $\gamma_e^X$ cannot cross $\hat{e}$ as this would contradict \Cref{L: bad wall polygon}(\ref{ineq: bad wall large intersection}).  

We specially label one more edge of $\gamma$ to complete this proof.
In the labelling of $\gamma$ let $e_\ell := e'$. 
Either $\ell < 1$ or $\ell > k$ by convexity of hypercarriers.  
If $\ell < 1$ fix $j = 0$, otherwise, fix $j = k + 1$.  
By choice of the labelling, $e_j$ is the edge closest to $e$ on the subinterval of $\gamma$ between $e$ and $e'$ whose interior is disjoint from $\gamma \cap \boundary U$.  
There exists a polygon $W \neq U$ that is adjacent to $e_j$ and whose interior has nontrivial intersection with $\gamma_e^X$ by \Cref{L: bad walls follow geodesics}.

Note that $\check{U} \neq W$ for otherwise $\boundary \check{U}$ would contain the entire subinterval of $\gamma$ joining $e$ and $e_j$.  
Such a subinterval would yield a piece between $\boundary \check{U}$ and $\boundary U$ containing $e$ and one of $e_1$ or $e_k$, which is impossible by how we chose of $e$.  

Martin and Steenbock showed that hypergraphs in $X$ are embedded trees \cite[Theorem~3.10]{MartinSteenbock}.  
It follows that any polygon whose interior intersects $Z_e^X$ disconnects the carrier $Y_e^X$.  
In particular, $U$ and $W$ must lie in distinct components of $Y_e^X \smallsetminus \check{U}$ by construction of $\gamma_e^X$.  
However, $U$ and $W$ have nontrivial intersection containing an endpoint of $e_j$ a contradiction.  
Thus, $Z_e^X$ crosses $\gamma$ exactly once.

Since $G$ acts cocompactly on $X_{bal}$, there exists some $N\in\naturals$ so that any polygon $U$ of $X_{bal}$ has $|\partial U|\le N$. Based on the above argument, every length $2N$ subsegment of $\gamma$ must contain some edge $e$ so that $Z_e^X$ intersects $\gamma$ exactly once. 
\end{proof}

\section{Relatively cubulating blowups of polygonal complexes}	
\label{S: blowup main}

In this section we provide a general framework for establishing relatively geometric actions for refined peripheral structures of a given relatively hyperbolic groups that acts on a polygonal complex whose 1--skeleton is a fine hyperbolic graph.  

\subsection{Relatively Geometric Blowups}

\begin{definition}\label{D: blowup}
Let $(K,\mc{D})$ be a relatively hyperbolic pair acting on a polygonal complex $X_0$ satisfying Hypotheses~\ref{H: polygonal wall hypotheses}. 
A \textbf{relatively geometric blowup} of $(X_0,K,\mc{D})$ is a cell complex $\EX$ with a $K$--action and a $K$--equivariant surjective combinatorial projection $\pi:\EX \to X_0$. so that:
\begin{enumerate}
\item \label{I: simply connected blow up}
$\EX$ is simply connected and the cells are either $n$--cubes or even-sided polygons. 
\item \label{I: cocompact blow up}
$K$ acts cocompactly by isometries on $\EX$. 
\item For any $x \in X_0$, the $\pi$--fiber is a (connected) CAT(0) cube complex, and if $\pi\inv(x)$ contains more than one point then $x$ is a vertex. \label{I: blowup fiber comp from vertices} 
\item every cell not in the $\pi$--fiber of a vertex is an edge or an even sided polygon. \label{I: polygons lift to blowup polygons}
\item \label{I: vertex condition}
Each $D \in \mc{D}$ admits a relatively hyperbolic structure $(D, \mc{R}_D)$ so that 
if $v$ is a vertex stabilized by $D$ then $(D, \mc{R}_D)$ acts relatively geometrically on $\pi\inv(v)$.
\end{enumerate}
If $v$ is a vertex, we call $\pi\inv(v)$ a \textbf{fiber complex}. 
\end{definition}

The blowup in \Cref{D: blowup} is a modification of that introduced by Martin and Steenbock \cite{MartinSteenbock} who replace condition (\ref{I: vertex condition}) with $D$ acting geometrically on a CAT(0) cube complex.
Observe that the equivariance of the projection ensures that for $g\in K$, $D^g$ acts relatively geometrically on the fiber complex over the vertex stabilized by $D^g$ with respect to $\mc{R}_D^g = \{R^g:\,R\in \mc{R}_D\}$. 

\begin{remark}
\label{R: trivial blowup}
Note that $X_0$ with the identity map is a relatively geometric blowup of itself where the peripheral structure on a stabilizer $K_v$ of a vertex $v$ is $(K_v,\{K_v\})$, which acts relatively geometrically on $v$ by the trivial action. 
\end{remark}

\begin{notation}
Given $D\in \mc{D}$ we write $ED$ for the fiber complex over the vertex stabilized by $D$. 
\end{notation}

Suppose that $(\EX,\pi)$ is a relatively geometric blowup of $(X_0,K,\mc{D})$. Then each $D\in \mc{D}$ stabilizes a vertex of $X_0$ and has a peripheral structure $\mc{R}_D$. 
If we let $\mc{R} = \bigcup_{D\in \mc{D}}\mc{R}_D$, then $(K,\mc{R})$ is a relatively hyperbolic pair. 
We call $(K,\mc{R})$ the \textbf{refined peripheral structure for the blowup}. 

The following is a straightforward consequence of the fact that the action of $K$ on $X_0$ witnesses the relative hyperbolicity of $(K,\mc{R})$ and \cite[Theorem 1.1]{WenyuanYang2014}:
\begin{proposition}
If $v$ is a vertex of $X_0$, then $\Stab_K(v)$ is relatively quasiconvex in $(K,\mc{R})$.
\end{proposition}

Thus if $D\in \mc{D}$, we obtain an inclusion $\partial_{\mc{R}_D} D\hookrightarrow \partial_{\mc{R}} K$ whose image is the limit set of $D$. 

We recall a useful result that will be used for fullness:
\begin{proposition}[{\cite[Proposition 6.1]{GenFine}}]\label{P: hyperplane parabolic iff sub-graph intersects}
Let $W$ be a hyperplane of $ED$, the fiber complex stabilized by $D$ and let $R_{ED}$ be a maximal parabolic subgroup in $(D,\mc{R}_D)$. 
$W$ is dual to an edge whose stabilizer is commensurable to $R_{ED}$ if and only if $R_{ED}$ is commensurable to a subgroup of $\Stab_D (W)$. 
\end{proposition}

\subsection{Separating points in the Bowditch Boundary of a fiber complex stabilizer}

Recall that our goal is to use Theorem~\ref{t:Rel BW} to produce a relatively geometric action by finding full relatively quasiconvex subgroups that separate any pair of points in the Bowditch Boundary. The next result shows that Theorem~\ref{T: hypergraph separation} is enough provided that we can separate pairs of points in the boundary of a single fiber complex stabilizer:

\begin{cor}\label{C: multifiber bdd separation}
Let $p,q\in \partial_{\mc{R}}K$ be distinct. If $p,q$ are not points in the Bowditch Boundary of a single fiber complex stabilizer, then there exists a full relatively quasiconvex subgroup $H_{p,q}$ in $(K,\mc{R})$ so that $p,q$ are in $H_{p,q}$--distinct components of $\partial_{\mc{R}}G\setminus \Lambda H_{p,q}$.
\end{cor}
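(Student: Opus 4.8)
The plan is to reduce to the separation statement in $\partial_{\mc{Q}}G$ already proved in \Cref{T: hypergraph separation}, using the boundary map $\phi\colon \partial_{\mc{P}}G \to \partial_{\mc{Q}}G$ of \Cref{P: peripheral boundary map}. The first step is to observe that the hypothesis forces $\phi(p)\neq\phi(q)$. Indeed, the preimage under $\phi$ of a parabolic vertex of $\partial_{\mc{Q}}G$ stabilized by a fiber group $G_v$ is exactly the limit set $\Lambda_{\mc{P}}G_v$, i.e. the Bowditch boundary of $G_v$ with its induced peripheral structure, while by \Cref{P: peripheral boundary map} the preimage of a conical limit point is a single point. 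So if $\phi(p)=\phi(q)$ were conical we would get $p=q$, contradicting distinctness, and if $\phi(p)=\phi(q)$ were parabolic then $p$ and $q$ would both lie in the Bowditch boundary of a single fiber group, contradicting the hypothesis. Hence $\phi(p)\neq\phi(q)$.

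Next I would apply \Cref{T: hypergraph separation} to the distinct points $\phi(p),\phi(q)\in\partial_{\mc{Q}}G$ to obtain a hypergraph $Z_e^X$ and a finite index subgroup $K_W\le\Stab_G(Z_e^X)$ such that $\phi(p)$ and $\phi(q)$ lie in $K_W$--distinct components of $\partial_{\mc{Q}}G\setminus\Lambda_{\mc{Q}}K_W$. Since $p\in\phi^{-1}(\phi(p))$ and $q\in\phi^{-1}(\phi(q))$, and $K_W$ is a finite index subgroup of $\Stab_G(Z_e^X)$, \Cref{P: projection preimages separation} (applied with $H=K_W$) immediately yields that $p$ and $q$ lie in $K_W$--distinct components of $\partial_{\mc{P}}G\setminus\Lambda_{\mc{P}}K_W$.

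It remains to verify that $K_{p,q}:=K_W$ is relatively quasi-convex in $(G,\mc{P})$. By \Cref{P: X wall rel qc}, $\Stab_G(Z_e^X)$ is relatively quasi-convex in $(G,\mc{Q})$; and since $Z_e^X$ contains no vertices of $X_{bal}$, \Cref{P: protofull} shows its intersection with every fiber group $G_v$ is finite. As each $Q^g$ with $Q\in\mc{Q}$ is some such $G_v$, these intersections are finite and hence trivially relatively quasi-convex in $(G,\mc{P})$, so \cite[Theorem~1.3]{WenyuanYang2014} gives that $\Stab_G(Z_e^X)$ is relatively quasi-convex in $(G,\mc{P})$, and the finite index subgroup $K_W$ inherits this property. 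The only step I expect to require genuine care is the identification of the fibers of $\phi$ over parabolic vertices with the Bowditch boundaries of the fiber groups --- this is precisely where the hypothesis ``$p,q$ do not lie in the Bowditch boundary of a single fiber group'' is used --- while the rest of the argument is an assembly of the propositions above.
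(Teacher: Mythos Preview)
Your proof is correct and follows essentially the same route as the paper: push $p,q$ forward along $\phi$, observe that the hypothesis forces $\phi(p)\neq\phi(q)$, apply \Cref{T: hypergraph separation}, and pull the separation back via \Cref{P: projection preimages separation}. Your argument is in fact more complete than the paper's on one point: the corollary asserts $K_{p,q}$ is relatively quasi-convex in $(G,\mc{P})$, and you supply the bridge from \Cref{P: X wall rel qc} (which only gives quasi-convexity in $(G,\mc{Q})$) to $(G,\mc{P})$ via \Cref{P: protofull} and \cite[Theorem~1.3]{WenyuanYang2014}, whereas the paper leaves this implicit here and only invokes \Cref{P: X wall rel qc} directly in the proof of the main theorem.
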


\begin{proof}
Recall from \Cref{P: peripheral boundary map} that there exists a continuous map $\phi:\partial_{\mc{R}}G\to\partial_{\mc{D}}G$ and $\phi(p)\ne\phi(q)$ if they are not in the boundary of a single fiber group. \Cref{T: hypergraph separation} and \Cref{P: projection preimages separation} now imply the desired result.
\end{proof}

\cite[Theorem 1.4]{GenFine} implies that any two distinct points in the Bowditch Boundary of a group that acts relatively geometrically can be separated into distinct complementary components of the limit set of a hyperplane stabilizer and a finite index subgroup of the hyperplane stabilizer does not identify these components up to group action. 
Every hyperplane in a fiber complex is the intersection of that fiber complex with an $\EX$--hyperstructure $W_e^{\EX}$.
We aim to use this fact by showing that we can extend the stabilizers of these hyperplanes to codimension-one subgroups of $K$.
We recall two useful results that were used in the process of proving \cite[Theorem 1.4]{GenFine}:
\begin{lemma}[{\cite[Lemma 6.7]{GenFine}}]\label{biinfinite sidedness}
Let $(L,\mc{F})$ act relatively geometrically on a CAT(0) cube complex $\tilde{C}$.
Let $\gamma:(-\infty,\infty)\to \tilde{C}\oskel$ be a connected bi-infinite combinatorial $(\lambda,\epsilon)$--quasi-geodesic. Given $M>1$, there exist a hyperplane $W$ of $\tilde{C}$ and $t_M>0$ so that for all $t$ with $|t|>t_M$:
\begin{enumerate}
\item $\gamma(\pm t)$ are separated by $W$,
\item $\gamma$ crosses $W$ an odd number of times, and
\item $d(\gamma(t),W)>M$.
\end{enumerate}
Also, $\gamma(t)\in W$ implies $|t|\le t_M$. 
\end{lemma}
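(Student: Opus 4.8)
The plan is to use two standard facts: (i) halfspaces of a CAT(0) cube complex are combinatorially convex, so a combinatorial geodesic of $\tilde{C}\oskel$ crosses each hyperplane at most once; and (ii) quasi-geodesics in hyperbolic spaces fellow-travel geodesics with the same endpoints (the Morse lemma). First I would record that $\tilde{C}\oskel$, with the combinatorial metric, is $\delta$--hyperbolic for some $\delta$: since the action of $(K,\mc{D})$ is relatively geometric, $\tilde{C}\oskel$ is $K$--equivariantly quasi-isometric to a coned-off Cayley graph of $(K,\mc{D})$ by the same Charney--Crisp argument as in \Cref{CCrestatement}, and coned-off Cayley graphs of relatively hyperbolic pairs are hyperbolic. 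Fix the Morse constant $D_0 = D_0(\lambda,\epsilon,\delta)$ with the property that every $(\lambda,\epsilon)$--quasi-geodesic of $\tilde{C}\oskel$ lies within Hausdorff distance $D_0$ of any geodesic joining its endpoints. No properness of $\tilde{C}\oskel$ will be needed, which is the point of working with finite geodesic segments rather than a bi-infinite geodesic.

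Next I would choose the hyperplane $W$. Assume the parametrization is such that $\gamma(0)$ is a vertex. For $n$ large, let $\sigma_n$ be a combinatorial geodesic of $\tilde{C}\oskel$ from $\gamma(-n)$ to $\gamma(n)$; it is nontrivial since $d(\gamma(-n),\gamma(n)) \ge 2n/\lambda - \epsilon$. Let $v_n$ be a vertex of $\sigma_n$ nearest to $\gamma(0)$, so $d(\gamma(0),v_n)\le D_0+1$, and let $W$ be the hyperplane dual to an edge of $\sigma_n$ incident to $v_n$. Then $\sigma_n$ crosses $W$ exactly once, so $W$ separates $\gamma(-n)$ from $\gamma(n)$; write $\mc{H}^{+}$ for the open halfspace containing $\gamma(n)$ and $\mc{H}^{-}$ for the one containing $\gamma(-n)$. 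Since $d(\gamma(0),W)\le D_0+2$, for any $|t|\ge n$ we get $d(\gamma(t),W)\ge |t|/\lambda - \epsilon - D_0 - 2$, which (taking $n$ large) is positive; in particular $\gamma(\pm n)$ lie strictly inside $\mc{H}^{\pm}$.

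The key step is to show that the two ends of $\gamma$ cannot escape their halfspaces: for all $t\ge n$ one has $\gamma(t)\in\mc{H}^{+}$, and symmetrically $\gamma(-t)\in\mc{H}^{-}$. If instead $\gamma(t_1)\in\mc{H}^{-}$ for some $t_1>n$, I would apply the Morse lemma to the quasi-geodesic $\gamma|_{[-n,t_1]}$: its point $\gamma(n)$ lies within $D_0$ of some point $z$ on the geodesic $[\gamma(-n),\gamma(t_1)]$, so $d(z,W)\ge d(\gamma(n),W)-D_0>0$ for $n$ large, forcing $z\in\mc{H}^{+}$. But then $[\gamma(-n),\gamma(t_1)]$ has endpoints $\gamma(-n),\gamma(t_1)$ both strictly in $\mc{H}^{-}$ and an interior point $z$ strictly in $\mc{H}^{+}$, so it crosses $W$ at least twice, contradicting that a combinatorial geodesic crosses each hyperplane at most once. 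This is the step I expect to be the main (indeed the only) subtle point; everything else is bookkeeping with the Morse lemma and convexity of halfspaces.

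Finally I would assemble the conclusions. Given $M>1$, choose $t_M\ge n$ with $t_M/\lambda - \epsilon - D_0 - 2 > M$. Then for $|t|>t_M$ we have $d(\gamma(t),W)\ge |t|/\lambda - \epsilon - D_0 - 2 > M$, which is conclusion (3) and also the final assertion (since $M>1>0$ forces $\gamma(t)\notin W$). By the key claim, $\gamma(t)\in\mc{H}^{+}$ and $\gamma(-t)\in\mc{H}^{-}$ whenever $|t|>t_M$, giving conclusion (1). For (2), observe that $\gamma|_{(t_M,\infty)}$ and $\gamma|_{(-\infty,-t_M)}$ are connected combinatorial subpaths disjoint from $W$, hence contained in $\mc{H}^{+}$ and $\mc{H}^{-}$ respectively; so every edge of $\gamma$ dual to $W$ occurs within the finite parameter window $[-t_M,t_M]$, and since along this window $\gamma$ passes from $\mc{H}^{-}$ to $\mc{H}^{+}$, the number of such edges — that is, the total number of times $\gamma$ crosses $W$ — is odd. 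The essential inputs are just that $\tilde{C}\oskel$ is hyperbolic and that halfspaces of a CAT(0) cube complex are combinatorially convex.
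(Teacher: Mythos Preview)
The paper does not actually prove this lemma; it is quoted verbatim from \cite[Lemma~6.7]{GenFine} and used as a black box. So there is no in-paper argument to compare against, and your proposal amounts to supplying a proof the authors chose to outsource.

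Your argument is correct. The two inputs you isolate---hyperbolicity of $\tilde{C}\oskel$ (which holds without properness, via the Charney--Crisp quasi-isometry to a coned-off Cayley graph) and combinatorial convexity of halfspaces in a CAT(0) cube complex---are exactly what is needed, and your use of the Morse lemma is legitimate since that lemma does not require properness. The key step, ruling out $\gamma(t_1)\in\mc{H}^-$ for $t_1>n$ by producing a combinatorial geodesic with both endpoints in $\mc{H}^-$ and an interior point in $\mc{H}^+$, is clean; the one place where a reader might pause is the clause ``forcing $z\in\mc{H}^+$'': strictly, $d(z,W)>0$ only says $z$ lies in \emph{some} open halfspace, and you need the extra remark that since $d(\gamma(n),z)\le D_0<d(\gamma(n),W)$ any path from $\gamma(n)$ to $z$ cannot reach $W$, so $z$ is on the same side as $\gamma(n)$. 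This is implicit in what you wrote but worth one more sentence. Note also that your construction actually gives a single hyperplane $W$ (depending only on the initial large $n$) that works for every $M>1$, which is slightly stronger than the stated lemma, where $W$ is allowed to depend on $M$.
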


\begin{lemma}[{\cite[Lemma 6.8]{GenFine}}]\label{ray sidedness}
Let $(L,\mc{F})$ act relatively geometrically on a CAT(0) cube complex $\tilde{C}$.
Let $\gamma:[0,\infty)\to \tilde{C}$ be an infinite combinatorial $(\lambda,\epsilon)$--quasi-geodesic ray where $x = \gamma(0)$ is a vertex. Given $M>1$, there exists a hyperplane $W$ of $\tilde{C}$ and $t_M>0$ so that for all $t$ with $t>t_M$:
\begin{enumerate}
\item $W$ separates $\gamma(0)$ and $\gamma(t)$,
\item if $\Stab_L(x)$ is infinite, $W$ is not dual to any edge whose stabilizer is commensurable to $\Stab_L(x)$, and
\item $d(\gamma(t),W)>M$. 
\end{enumerate}
In particular, if $\gamma(t)\in W$, then $0<t \le t_M$. 
\end{lemma}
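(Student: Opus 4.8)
The plan is to carry out the argument entirely in the hyperbolic graph $\tilde{C}^{(1)}$, mimicking the proof of the bi-infinite statement \Cref{biinfinite sidedness} (\cite[Lemma~6.7]{GenFine}) together with the acylindrical skewer argument used in Case~2 of Proposition~\ref{P: hypergraph separation}, and adding one extra step to secure condition~(2). Since the action of $(K,\mc D)$ on $\tilde{C}$ is relatively geometric it is cocompact, so as in Proposition~\ref{CCrestatement} and Proposition~\ref{QC-vertex spaces} the graph $\tilde{C}^{(1)}$ is $K$--equivariantly quasi-isometric to a coned-off Cayley graph of $(K,\mc D)$; in particular $\tilde{C}^{(1)}$ is $\delta$--hyperbolic, and by \cite[Proposition~5.2]{OsinAH} together with quasi-isometry invariance of acylindricity for cocompact actions (as in the proof of \Cref{SCquotient-acyl}) the $K$--action on $\tilde{C}^{(1)}$ is acylindrical. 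Carriers of hyperplanes are convex and, by cocompactness, each $\Stab_K(W)$ acts cocompactly on the carrier $N(W)$, so $\Stab_K(W)$ is quasi-isometrically embedded in $\tilde{C}^{(1)}$ and $\Lambda\Stab_K(W)=\Lambda N(W)$. Finally, when $\Stab_K(x)$ is infinite it is commensurable to a maximal parabolic and only finitely many hyperplanes are dual to an edge whose stabilizer is commensurable to $\Stab_K(x)$, since the subcomplex spanned by cells with stabilizer commensurable to $\Stab_K(x)$ is compact (part of the generalized fine structure on $\tilde{C}^{(1)}$; cf.\ \cite{GenFine}) and so contains finitely many edges.

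Let $\xi=\lim_{t\to\infty}\gamma(t)\in\partial\tilde{C}^{(1)}$. The ray $\gamma$ crosses infinitely many distinct hyperplanes, for otherwise only finitely many hyperplanes separate $\gamma(0)$ from any $\gamma(t)$ and $d(\gamma(0),\gamma(t))$ would be bounded. Using hyperbolicity---the geodesics $[\gamma(0),\gamma(t)]$ fellow-travel on longer and longer initial segments as $t\to\infty$ and each crosses a given hyperplane at most once---one sees that infinitely many hyperplanes $W$ are \emph{permanently crossed} by $\gamma$, in the sense that $\gamma(t)$ lies on the side of $W$ opposite $\gamma(0)$ for all large $t$. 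Among these, all but finitely many satisfy $\xi\notin\Lambda N(W)$: arguing as in Case~2 of Proposition~\ref{P: hypergraph separation}, if infinitely many did not, then since there are finitely many $K$--orbits of hyperplanes we may pass to one orbit $\{g_iW_0\}$ of distinct offenders; a quasi-geodesic ray asymptotic to a point of $\Lambda N(g_iW_0)$ eventually lies in a uniform neighbourhood of $N(g_iW_0)$, so $\diam\bigl(\bigcap_{i\le k}\mc N_{\varepsilon}(g_iN(W_0))\bigr)=\infty$ for every $k$ with $\varepsilon$ a fixed constant, contradicting acylindricity of the $K$--action along $\Stab_K(W_0)$. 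For a permanently crossed hyperplane $W$ with $\xi\notin\Lambda N(W)$ we have $d(\gamma(t),W)\to\infty$, so $\gamma$ crosses $W$ only finitely often; there are infinitely many such $W$, and after discarding the finitely many of them dual to an edge with stabilizer commensurable to $\Stab_K(x)$ we pick any remaining one. Let $t_M$ be larger than both the last time $\gamma$ crosses $W$ and the first time after which $d(\gamma(t),W)>M$. Then for every $t>t_M$ we have $\gamma(t)\notin\mc N_1(W)$, which is~(3); $\gamma$ crosses no edge dual to $W$ after time $t_M$, so $\gamma(t)$ lies on the side of $W$ opposite $\gamma(0)$, which is~(1); and~(2) holds by the choice of $W$. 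Finally, $\gamma(t)\in W$ would mean $\gamma$ is crossing $W$ at parameter $t$, which occurs only for $0<t\le t_M$ since $\gamma(0)=x$ is a vertex.

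I expect the main obstacle to be the acylindricity step: one must pass carefully from the coarse statement ``$\xi\in\Lambda N(W_i)$'' to a genuinely unbounded common intersection $\bigcap_i\mc N_{\varepsilon}(g_iN(W_0))$ with a single additive constant $\varepsilon$, using that hyperplane carriers are uniformly quasi-convex (a consequence of cocompactness) and that a quasi-geodesic ray converging to a point in the limit set of a quasi-convex subset must eventually track that subset. This step is genuinely needed---not merely a convenience---because $\tilde{C}$ is not locally finite, so infinitely many hyperplanes may cluster near a single parabolic vertex and must be excluded by a global argument rather than a local one. The remaining verification of (1)--(3) for the chosen $W$, and the identification of infinitely many permanently crossed hyperplanes, are routine once the hyperbolic and acylindrical inputs are in place.
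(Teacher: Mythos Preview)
The paper does not prove this lemma: it is quoted verbatim from \cite[Lemma~6.8]{GenFine} and used as a black box. There is therefore no proof in the paper to compare your proposal against. What you have written is an independent argument, built out of tools that \emph{this} paper assembles for other purposes (acylindricity of the action on the coned-off graph, the skewer argument of Case~2 of Proposition~\ref{P: hypergraph separation}, and the compactness of peripheral subcomplexes from the generalized fine structure).

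Your strategy is sound and the acylindricity step---which you correctly flag as the main point---is handled well. The one underdeveloped step is the claim that infinitely many hyperplanes are \emph{permanently crossed}. Fellow-travelling of the geodesics $[\gamma(0),\gamma(t)]$ on long initial segments does not by itself pin down which side of a fixed hyperplane $\gamma(t)$ lies on, since two $2\delta$--close combinatorial geodesics need not cross the same hyperplanes. A cleaner route is to reverse the order: first note that $\gamma$ crosses infinitely many hyperplanes (since $d(\gamma(0),\gamma(t))\to\infty$); run your acylindricity argument on \emph{these} to conclude that all but finitely many satisfy $\xi\notin\Lambda N(W)$; then for each such $W$ deduce $d(\gamma(t),N(W))\to\infty$, so $\gamma$ eventually stays on one definite side. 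To see that infinitely many of these have $\gamma$ ending on the \emph{far} side, use a genuine geodesic ray $\sigma$ from $\gamma(0)$ to $\xi$ (the CAT(0) geodesic in $\tilde C$ suffices, or a combinatorial ray produced as in Proposition~\ref{biinfinite-geod}): $\sigma$ crosses infinitely many hyperplanes, each exactly once, and for those with $\xi\notin\Lambda N(W)$ the ray $\sigma$ moves arbitrarily far from $N(W)$ on the far side, which forces $\gamma$ (uniformly fellow-travelling $\sigma$) to do the same. With that reordering your argument goes through.
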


Recall that the vertex groups $D^g$ where $D\in\mc{D}$ and $g\in K$ act relatively geometrically on their associated fiber complex $ED^g$.    
This gives the 1--skeleton, $(ED^g)\oskel$, the structure of a generalized fine hyperbolic graph with respect to this action.
Generalized fine graphs are easier to work with for separating boundary points as was shown in \cite{GenFine}. 
In particular, the complete electrification map described in \Cref{electricQI} is an equivariant quasi-isometry to a fine hyperbolic graph.  

\begin{proposition}
\label{EG:gen fine}
$\EX\oskel$ is a generalized fine hyperbolic graph with respect to the action of $(K,\mc{R})$. 
\end{proposition}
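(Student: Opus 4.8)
The plan is to verify directly the four defining conditions of \Cref{D:GeneralizedFine} for the pair $(G,\mc P)$ acting on $\Sigma := \EG\oskel$. First I record the standing data: $\mc P = \mc P_A\cup\mc P_B$ is finite and $(G,\mc P)$ is a relatively hyperbolic pair by \Cref{rh structure P}, so $\mc P$ is almost malnormal, and each $P\in\mc P$ is finitely generated because it is a peripheral subgroup of the finitely generated relatively hyperbolic groups $A,B$. The graph $\Sigma$ is $\delta$--hyperbolic since it is $G$--equivariantly quasi-isometric to the coned--off Cayley graph of $(G,\mc P)$ by \Cref{QC-vertex spaces}, and the coned--off Cayley graph of a relatively hyperbolic pair is hyperbolic. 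Condition~(1), that $\Sigma/G$ is compact, is immediate from \Cref{EG cocompactness}.

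For condition~(2) I would use the structure of the projection $p\colon\EG\to X$. Every vertex of $\EG$ lies in a fiber complex, and an edge or polygon of $\EG$ not contained in a fiber complex projects to a cell of $X$ whose local group in $G(\mc K)$ is trivial or $\Z/d\Z$; hence every cell of $\EG\oskel$ outside the fiber complexes has finite stabilizer. A cell contained in a fiber complex $EG_v$ with infinite stabilizer has $\Stab_G(\text{cell})=\Stab_{G_v}(\text{cell})$, and since $(G_v,\mc P_v)$ acts relatively geometrically on $EG_v$ this group is commensurable to a conjugate of some $P\in\mc P_v\subseteq\mc P$; thus it lies in some $\Sigma_{D^k}$. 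For condition~(3) I would reduce to the fact that each $EG_v\oskel$ is a generalized fine graph with respect to $(G_v,\mc P_v)$ (by relative geometricity of these actions, cf.\ \cite{GenFine}), after first checking that a peripheral subgraph $\Sigma_{D^k}$ lies inside a single fiber complex: if a cell with stabilizer commensurable to the infinite group $D^k$ lies in $EG_v$, then $D^k$ is commensurable to a conjugate of a peripheral of $(G_v,\mc P_v)$, hence by almost malnormality of $\mc P$ is a maximal parabolic of $(G,\mc P)$ contained in $G_v$; since distinct vertex groups of $X$ intersect trivially (all edge stabilizers of $X$ are trivial, as in the proof of \Cref{P: protofull}), this $v$ is unique. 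Then $\Sigma_{D^k}$ is exactly the corresponding peripheral subgraph of $EG_v\oskel$, which is compact and connected.

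Condition~(4) is the heart of the argument: for a finite-stabilizer edge $e$ of $\Sigma$ and $n\in\naturals$ there should be finitely many circuits of length at most $n$ through $e$. I would argue via $p$ and the fineness of $X\oskel$, which holds by the argument of \Cref{P: fine hyp graph} (a cocompact hyperbolic $G$--graph with uniquely stabilized vertices). A circuit $c$ of length $\le n$ through $e$ is embedded, so it decomposes cyclically into runs $R_1,\dots,R_\ell$ inside fiber complexes $EG_{v_1},\dots,EG_{v_\ell}$ with $\ell\le n$, joined by the unique lifts to $\EG$ of edges of $X$ (possibly through point-fibers over apices). Since those lifts are unique and $c$ is embedded, the image $p(c)$ is a closed edge-path in $X\oskel$ of length $\le n$ traversing each of its edges exactly once; once the edge set of $p(c)$ is pinned down, the fiber complexes visited and the entry/exit vertices of each run are determined, and each $R_i$ is an embedded arc of bounded length between two prescribed vertices of the generalized fine graph $EG_{v_i}\oskel$, of which — using fineness away from the compact subgraphs $\Sigma_{D^k}$ from condition~(3) — there are only finitely many. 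It therefore remains to bound the edge set of $p(c)$: when $c$ visits each fiber complex at most once this is controlled directly by fineness of $X\oskel$, and when $c$ re-enters a fiber complex $EG_v$ I would use hyperbolicity of $\Sigma$ together with the $\kappa$--quasiconvexity of $EG_v$ in $\Sigma$ (\Cref{QC-vertex spaces}) to confine such a circuit to a bounded neighbourhood of $EG_v$ and push the count into $EG_v\oskel$. Assembling these bounds yields finitely many $c$, and the four conditions together give that $\EG\oskel$ is generalized fine with respect to $(G,\mc P)$. (Alternatively, condition~(4) could be deduced from a blow-up/combination principle for generalized fine graphs if one is available in \cite{GenFine}.)

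The main obstacle I anticipate is exactly this last point inside condition~(4): ruling out a circuit that winds repeatedly through a single fiber complex, which is not locally finite, in a way not detected by the fine base $X\oskel$; making the confinement estimate precise and transferring it cleanly into the generalized fine structure of the vertex space is where the real work lies. By contrast, conditions~(1), (2), and~(3) are short consequences of cocompactness (\Cref{EG cocompactness}), the relative geometricity of the fiber actions, and the generalized fineness of the individual vertex spaces established in \cite{GenFine}.
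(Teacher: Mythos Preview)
Your treatment of conditions~(1)--(3) is essentially what the paper does: cocompactness from \Cref{EG cocompactness}, infinite cell stabilizers live in fiber complexes and hence in some $\Sigma_{D^k}$ by relative geometricity of the fiber actions, and each $\Sigma_{D^k}$ is contained in a single fiber complex (using that distinct vertex groups of $X$ intersect trivially) where it is compact and connected by the structure theory of relatively geometric actions.

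Where you diverge is condition~(4). You attempt a direct circuit count via the projection $p\colon\EG\to X$ and fineness of $X\oskel$, and you correctly identify the obstruction: a short circuit in $\EG\oskel$ can revisit a single (locally infinite) fiber complex repeatedly, and the confinement/transfer argument you sketch is not made precise. The paper sidesteps this entirely and follows exactly the alternative you mention parenthetically. Rather than verifying~(4) by hand, the paper forms the complete electrification $\Gamma$ collapsing the $\Sigma_{P^g}$, observes that $\Gamma$ is hyperbolic (it is still $G$--equivariantly quasi-isometric to the coned-off Cayley graph for $(G,\mc P)$ by \Cref{QC-vertex spaces}), that $G$ acts cocompactly on $\Gamma$ with finite edge stabilizers, and that each maximal parabolic fixes a unique vertex of $\Gamma$; then Bowditch's criterion \cite[Lemma~4.5]{BowditchRH} gives that $\Gamma$ is fine. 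Finally \cite[Proposition~3.2]{GenFine} says that having a fine electrification is equivalent to being generalized fine, which yields the result with no circuit bookkeeping at all. So the ``blow-up/combination principle'' you hoped for is precisely \cite[Proposition~3.2]{GenFine}, and invoking it replaces the incomplete part of your argument.
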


\begin{proof}
By \cite[Theorem 5.1]{CharneyCrisp} (a general version of \Cref{CCrestatement}), $\EX$ is equivariantly quasi-isometric to the coned-off Cayley graph for $(K,\mc{R})$. Since the actions of the fiber groups on the fiber complexes are relatively geometric, every maximal parabolic stabilizes a vertex of $\EX$. 
Likewise, if $\Sigma_{R^g}$ is the sub-graph of $\EX\oskel$ whose cells have stabilizer commensurable to $R^g$ for some $R\in\mc{R}$ and $g\in K$, then $\Sigma_{R^g}$ lies in a fiber complex and is connected and compact by \cite[Proposition 3.5]{RelGeom}. If $\Gamma$ is the complete electrification of $\EX$ with respect to $\Sigma_{R^g}$, then $\Gamma$ has trivial edge stabilizers, and the maximal parabolic subgroups of $(K,\mc{R})$ each stabilize exactly one vertex of $\Gamma$. Therefore, by \Cref{CCrestatement}, $\Gamma$ is also quasi-isometric to the coned-off Cayley graph for $(K,\mc{R})$ and is hyperbolic. Hence, by \cite[Lemma 4.5]{BowditchRH}, $\Gamma$ is a fine hyperbolic graph. 
Therefore by \cite[Proposition 3.2]{GenFine}, $\EX\oskel$ is generalized fine with respect to the action of $(K,\mc{R})$. 
\end{proof}

\begin{figure}[h]
\begin{tikzcd}
\EX \arrow[rr, "electrify"] \arrow[d, "\pi", swap]
& &	\Gamma \arrow[dll] \\
X_0 
\end{tikzcd}
\caption{Diagram relating the projection map $\pi$ with the electrification in the proof of \Cref{EG:gen fine}.}
\end{figure}

We fix the following notation for the remainder of this section.
Let $p,q\in \partial_{\mc{R}_D}(D)$ be distinct. Recall $ED$ is the fiber complex over a vertex stabilized by $D$. Then by \cite[Theorem 1.4]{GenFine}, there exists a hyperplane $W_D$ in $ED$ so that $\Lambda\Stab_D (W_D)$ separates $p$ and $q$ in $\partial_{\mc{P}_D}D$. Let $e$ be an edge dual to $W_D$. Then $W_e^{\EX}$ contains $W_D$ in $\EX$. 

Let $\beta:\EX\oskel\to\Gamma$ be the equivariant quasi-isometry that collapses the $\Sigma_{D^g}$ from \Cref{EG:gen fine} where $\Gamma$ is a fine hyperbolic graph witnessing the relative hyperbolicity of $(K, \mc{R})$ in the sense of \Cref{D: fine relhyp}. Let $L$ be the hyperset associated to $W_e^{\EX}$ in $\EX$ and let $J$ be the associated hypercarrier. 

We now introduce a few additional conditions on the blowup that will be helpful to control projections of the hyperstructures $W_e^{\EX}$:

\begin{definition}\label{D: blowup projection properties}
Let $(\EX,\pi)$ be a relatively geometric blowup of $(X_0,K,\mc{D})$ as in \Cref{D: blowup}. 
\begin{enumerate}
\setcounter{enumi}{5}
\item  \label{I:PWT}
$(\EX,\pi)$ has the \textbf{projected wall tree property} if $\pi$ projects every $\EX$--hyperstructure to an (embedded) tree in $X_0$ whose intersection with $X_0\oskel$ is quasiconvex.   
\item \label{I:PWF}
$(\EX,\pi)$ has the \textbf{projected wall fullness property} if the following holds: whenever $W_e^{\EX}$ is a $\EX-$hyperstructure and $v$ is a vertex of $X_0$ so that $\Stab_K(W_e^{\EX})$ is infinite, $\Stab_K(W_e^{\EX})$ has infinite intersection with $\Stab_K(v)$ if and only if $\pi(W_e^{\EX})$ intersects $v$. 
\item \label{I:2SWP}
$(\EX,\pi)$ has the \textbf{two-sided wall projection property} if for every $W_e^{\EX}$, $\pi$ maps the complementary components of $W_e^{\EX}$ in $\EX$ to distinct components of $\pi(W_e^{\EX})$.
\end{enumerate}
\end{definition}

Note that Item~\ref{I:2SWP} verifies that the projection of the wall space structure on the blow-up induces a wallspace structure on base polygonal complex.

\begin{proposition}\label{P: blowup wall relqc}
If $(\EX,\pi)$ has the projected wall tree property, then $\Stab_K(W_e^{\EX})$ is relatively quasiconvex in $(K,\mc{D})$.  
\end{proposition}

\begin{proof} 
$\Stab_G(W_e^{\EX})$ acts cocompactly on the quasiconvex tree $\pi(W_e^{\EX})$ by \Cref{L: hyperstructure stab cocompact}. Therefore, \Cref{T: qccrit} implies that $\Stab_G(W_e^{\EX})$ is relatively quasiconvex in $(K,\mc{D})$. 
\end{proof}

\begin{proposition}\label{P: blowup wall full}
Suppose the blow-up $(\EX,\pi)$ has the projected wall tree and fullness properties.  Let $H = \Stab_K(W_e^{\EX})$. Then:
\begin{enumerate}
\item If $ED^g$ is a fiber complex and $W_e^{\EX}$ intersects $ED^g$, then $W_e^{\EX}\cap ED^g$ is a hyperplane and $H\cap \Stab_K(v)$ is the stabilizer of a hyperplane of $ED^g$. 
\item $H$ is full relatively quasiconvex in $(K,\mc{R})$.
\end{enumerate} 
\end{proposition}

\begin{proof}
For the first claim, if $W_e^{\EX}$ intersects $ED^g$ in more than one hyperplane, then the projection $\pi(W_e^{\EX})$ will contain a loop. This contradicts the fact that $\pi(W_e^{\EX})$ is a tree.  

Let $W_{D^g} = W_e^{\EX}\cap ED^g$ so that $W_{D^g}$ is a hyperplane of $ED^g$. If $h\in H\cap D^g$ then $h\cdot W_v$ is a hyperplane of $ED^g$. Therefore, $h\cdot W_{D^g} = W_{D^g}$ because $ED^g \cap W_e^{\EX} = W_{D^g}$. Hence $H\cap D^g \le \Stab_{D^g}(W_{D^g})$. 
Similarly, if $h'\in \Stab_{D^g}(W_{D^g})$, then the intersection of $h'\cdot W_e^{\EX}$ with $ED^g$ is $W_{D^g}$. Then $h'\cdot W_e^{\EX} = W_e^{\EX}$ because $W_e^{\EX}$ is the unique $\EX--$hyperstructure that intersects $ED^g$ in $W_{D^g}$. Then $\Stab_{D^g}(W_{D^g}) \le D^g\cap H$. 

Relative quasiconvexity of $H$ in $(K,\mc{D})$ follows from \Cref{P: blowup wall relqc}. 
By \cite[Theorem 1.3]{WenyuanYang2014}, $H$ is relatively quasiconvex in $(K,\mc{R})$ exactly when $H\cap Q^g$ is relatively quasiconvex in $(K,\mc{R})$ for all $g\in G$ and $Q\in\mc{D}$. 
Since $Q^g$ stabilizes a fiber complex, $H\cap Q^g$ is a hyperplane stabilizer which is relatively quasiconvex in $(K,\mc{R})$ by \Cref{P: hypstabrelqc}. 

If $W_e^{\EX}$ intersects $ED^g$, then $D^g\cap H$ stabilizes a hyperplane of $ED^g$. Hyperplane stabilizers of relatively geometric actions are full by \Cref{hyp stab full} so for any parabolic (with respect to $(K,\mc{R})$) subgroup $R$ of $D^g$, $H\cap R$ is either finite or finite index in $R$. 

By the projected wall fullness property, $H$ intersects the stabilizer of a fiber complex associated to a vertex $v$ of $X_0$ exactly when $\pi(W_e^{\EX})$ passes through $v$. Therefore, if $W_e^{\EX}$ does not intersect the fiber complex over $v$:
\[|\Stab_K(v)\cap H| \le |\Stab_K(v)\cap \Stab_K(p(W_e^{\EX}))|<\infty.\]
Since every maximal parabolic is contained the stabilizer of a fiber complex, it follows that $H$ is full in $(K,\mc{R})$. 
\end{proof}

Since $\Stab_G (W_e^{\EX})$ is relatively quasiconvex in $(K,\mc{R})$ by \Cref{P: blowup wall relqc}, $L = W_e^{\EX}\cap \EX\oskel$ is quasiconvex in $\EX\oskel$, and hence the associated hypercarrier $J$ is quasiconvex. The following property follows immediately from the two-sided wall projection property and the fact that $\pi$ is continuous:
\begin{proposition}
If $(\EX,\pi)$ has the two-sided wall projection property, each $W_e^{EG}$ separates $\EX$ into two complementary components which naturally divides $J$ into $J^+$ and $J^-$ so that $J^+\cap J^- = L$.

Further, every path between vertices of components of $\EG_{bal}\oskel\setminus L$ must traverse an edge dual to $W_e^{\EG}$. 
\end{proposition}

By \Cref{P: collapsing hypersets}, $\beta(L)$ is a hyperset in the fine hyperbolic graph $\Gamma$ and $\beta(J)$ is its hypercarrier. 
Moreover, because we are working with the refined structure $(K, \mc{R})$ each of the stabilized sub-graphs $\Sigma_{R^g}$ for $R \in \mc{R}$ and $g\in K$ is contained in a fiber complex.  

\begin{proposition}\label{P: two-sided carrier holds}
If $(\EX,\pi)$ has the two-sided wall projection, projected wall tree and projected wall fullness property, then the hypercarrier $\beta(J)$ satisfies the two-sided carrier property (Recall \Cref{D: two-sided carrier}).
\end{proposition}

\begin{proof}
Observe that $\beta(J^+)\cup \beta(J^-) =\beta(J)$. If $z$ is a vertex of $\beta(J^+) \cap \beta(J^-)$, then $z$ is the image of some $\Sigma_{D^g}$ which lies entirely in a fiber complex $ED^g$. The only way for $\beta(J^+)$ and $\beta(J^-)$ to intersect in a vertex $v$ is if an edge $e'$ dual to $W_e^{\EX}$ contained in $\Sigma_{D^g}$ collapses. 
Let $EX_v$ be the fiber complex over $v$ and let $K_v = \Stab_K(v)$. 
Then the hyperplane $W_{e'}$ in $EX_v$ dual to an edge $e'$ of $EX_v$ is contained in $W_e^{\EX}$ and $\Stab_{K_v}(W_{e'})\le \Stab_{K} (W_e^{\EG})$ by \Cref{P: blowup wall full}. 
Since $\Sigma_{D^g}$ contains $e'$, $D^g$ is commensurable to a subgroup of $\Stab_{K_v}(W_{e'})\le \Stab_{G} (W_e^{\EG})$ by \Cref{P: hyperplane parabolic iff sub-graph intersects}. 
Thus $\Stab_K(v)$ is commensurable to a subgroup of $\Stab_K (W_e^{\EG})$ and thus $\Lambda\Stab_K(v) \subseteq \Lambda \Stab_K (W_e^{\EG})$.  

Since every path between components of $\EX\oskel\setminus L$ must traverse an edge dual to $W_e^{\EG}$, every path between components of $\Gamma\setminus \beta(L)$ must intersect $\beta(J^+)\cap \beta(J^-)$. 
\end{proof}

\begin{lemma}\label{comp components of A enough}
Let $L$ be the hyperset associated to $W_e^{\EX}$. If $D\in \mc{D}$ and $x,y \in ED\oskel\setminus L$ are distinct points that lie in distinct complementary components of $L$, then $x$ and $y$ lie in distinct complementary components of $\EX\oskel \setminus L$.  
\end{lemma}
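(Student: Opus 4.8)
The plan is to trace the separation back to the single edge $e$ through which both $W_A$ (inside $EA$) and $W_e^{\EG}$ (inside $\EG_{bal}$) pass. First I would pin down the relevant intersections. By \Cref{L: hypergraph vertex stabilizer}, $W_e^{\EG}\cap EA$ is a hyperplane of $EA$, and since it contains the hyperplane $W_A$ it must equal $W_A$. As $W_e^{\EG}$ meets $\EG_{bal}\oskel$ only in midpoints of its dual edges, this gives $L\cap EA\oskel = W_A\cap EA\oskel$. Hence the two complementary components of $L$ inside $EA\oskel$ are exactly $D^{+}$ and $D^{-}$, the intersections with $EA\oskel$ of the two halfspaces of the hyperplane $W_A$ in $EA$; each $D^{\pm}$ is connected and disjoint from $L$. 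Relabelling if necessary, I may assume $p\in D^{+}$ and $q\in D^{-}$.

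Next I would use that $e$ is simultaneously a dual edge of $W_A$ in $EA$ and of $W_e^{\EG}$ in $\EG_{bal}$. Since $W_A$ is a genuine hyperplane of the cube complex $EA$, its two dual-edge endpoints $e_{-},e_{+}$ of $e$ lie on opposite sides, so $e_{+}\in D^{+}$ and $e_{-}\in D^{-}$. On the other hand, $W_e^{\EG}$ contains the midpoint of $e$ and, by \cite[Lemma 3.34]{MartinSteenbock}, separates $\EG_{bal}$ into exactly two components; removing that midpoint from $e$ locally disconnects $e_{+}$ from $e_{-}$, so $e_{+}$ and $e_{-}$ lie in distinct components of $\EG_{bal}\setminus W_e^{\EG}$. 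Because $L=W_e^{\EG}\cap\EG_{bal}\oskel$, any edge path in $\EG_{bal}\oskel$ avoiding $L$ is a path in $\EG_{bal}\setminus W_e^{\EG}$; therefore $e_{+}$ and $e_{-}$ already lie in distinct components of $\EG_{bal}\oskel\setminus L$.

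I would then assemble the conclusion. The set $D^{+}$ is connected in $EA\oskel$, hence connected in $\EG_{bal}\oskel$, and it is disjoint from $L$, so $D^{+}$ is contained in the single component of $\EG_{bal}\oskel\setminus L$ that contains $e_{+}$; likewise $D^{-}$ is contained in the component containing $e_{-}$. Since those two components are distinct, $p\in D^{+}$ and $q\in D^{-}$ lie in distinct complementary components of $L$ in $\EG_{bal}\oskel$, which is exactly the assertion.

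The step I expect to require the most care—really the only nonformal point—is the passage from ``$W_e^{\EG}$ separates $\EG_{bal}$'' to ``$L$ separates $\EG_{bal}\oskel$''. This rests on the fact that, in the balanced complex, $W_e^{\EG}$ meets $\EG_{bal}\oskel$ in a set consisting solely of edge midpoints (no vertices): $W_e^{\EG}$ crosses each polygon along a straight segment joining midpoints of diametrically opposite edges and crosses each higher cube along midcubes, neither of which passes through a vertex, so that $\EG_{bal}\oskel\smallsetminus L\subseteq \EG_{bal}\smallsetminus W_e^{\EG}$ and a dual edge is genuinely cut by $L$ at its midpoint. Everything else—connectedness of halfspaces, endpoints of a dual edge lying on opposite sides—is a formal consequence of $W_A$ being an honest hyperplane of the fiber complex $EA$.
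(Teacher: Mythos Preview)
Your argument is correct. The paper's own proof is considerably shorter and organised around parity of crossings rather than connectedness of halfspaces: it simply notes that, since $W_e^{\EG}$ separates $\EG_{bal}$ into two components, two points of $\EG_{bal}\oskel$ lie in distinct components of $\EG_{bal}\oskel\setminus L$ if and only if some (equivalently any) path between them crosses $L$ an odd number of times, and that a path lying entirely in $EA\oskel$ crosses $L$ exactly when it crosses the hyperplane $W_e^{\EG}\cap EA$. Since $p$ and $q$ are assumed to be separated by that hyperplane in $EA$, any such path crosses an odd number of times, and the conclusion follows. Your route---anchoring at the specific dual edge $e$ and transporting $p,q$ to its endpoints via the connectedness of the halfspaces $D^{\pm}$---reaches the same destination but trades the one-line parity count for a more explicit set-up. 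The benefit is that you make fully explicit the identification $L\cap EA\oskel = W_A\cap EA\oskel$ and the passage from separation in $\EG_{bal}$ to separation in $\EG_{bal}\oskel$, both of which the paper leaves to the reader; the cost is a noticeably longer argument for what is ultimately the same elementary topological observation.
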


\begin{proof}
Since $W_e^{\EG}$ splits $\EX$ into two components, any path in $\EX\oskel$ has endpoints in different components of $\EX\oskel\setminus L$ if and only if the path crosses $L$ an odd number of times. Likewise, complementary components of $ED\oskel\setminus W_e^{\EG}$ are determined by the number of times a path crosses the hyperplane $W_e^{\EX}\cap ED$. 
\end{proof}

To prove relative quasiconvexity of $\Stab_K(W_e^{\EX})$ we need relative quasiconvexity of hyperplane stabilizers of fiber complexes in $(K,\mc{R})$. 
\begin{proposition}\label{P: hypstabrelqc}
Suppose $H_W$ is a hyperplane stabilizer for a hyperplane in a fiber complex $ED^g$:
\begin{enumerate}
\item $H_W$ is relatively quasiconvex in $(D^g,\mc{P}_D^g)$
\item $H_W$ is relatively quasiconvex in $(K,\mc{R})$. 
\end{enumerate}
\end{proposition}

\begin{proof}
The first claim follows from \cite[Corollary 4.11]{GenFine}.

Since $(D^g,\mc{R}_D^g)$ is the peripheral structure induced by $(K,\mc{R})$ 
and $H_W$ is relatively quasiconvex in $(D,\mc{R}_D^g)$, $H_W$ is relatively quasiconvex in $(K,\mc{R})$ by \cite[Corollary 9.3]{Hruska2010}. 
\end{proof}

\begin{proposition}\label{P: separation in fiber case}
Let $D\in \mc{D}$. 
Let $p,q\in \partial_{\mc{R}}K$ be distinct so that $p,q\in \partial_{\mc{R}_D}D$. 
There exists an edge $e$ of $ED$ so that if $L$ is the hyperset associated to $W_e^{\EX}$, the hyperset $\beta(L)$ separates $p,q$ in the sense of \Cref{D: separation}. 
\end{proposition}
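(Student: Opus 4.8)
The plan is to import the separation statement for the relatively geometric action of $(A,\mc{P}_A)$ on $EA$ provided by \cite{GenFine} and transport it along the collapse map $\beta\colon\EG_{bal}\oskel\to\Gamma$ using the structural results of this section. The preliminary fact we use throughout is that $A=G_v$ is \emph{full} relatively quasi-convex in $(G,\mc{P})$: a peripheral $P^g$ of $(G,\mc{P})$ meets $A$ in an infinite subgroup only when $g\in A$, and then $P^g\le A$; equivalently, for $P\in\mc{P}_A$ the parabolic sub-graph $\Sigma_{P^g}$ of $\EG_{bal}\oskel$ lies in the single fibre $EA_{bal}$ and agrees with the sub-graph used to build the fine hyperbolic graph $\Gamma_A$ of $(A,\mc{P}_A)$ obtained by electrifying $EA_{bal}\oskel$. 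Combined with geometric finiteness of the $A$--action on $\Lambda A$, this shows that a point of $\Lambda A\subseteq\partial_{\mc{P}}G$ is conical, respectively parabolic, for $(G,\mc{P})$ precisely when it is so for $(A,\mc{P}_A)$, so the three cases of \Cref{D: separation} for $p,q$ in $\Gamma$ match the three cases for $p,q$ in $\Gamma_A$.

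Applying the sidedness results of \cite{GenFine} to $(A,\mc{P}_A)\acts EA$ --- \Cref{biinfinite sidedness} for a bi-infinite combinatorial quasi-geodesic $\gamma$ joining $p,q$ in $EA_{bal}\oskel$ (when both are conical), \Cref{ray sidedness} for a quasi-geodesic ray based at a vertex fixed by a maximal parabolic fixing an endpoint, and \Cref{P: hyperplane parabolic iff sub-graph intersects} --- we may take the hyperplane $W_A$ fixed above so that the hyperset $W_A\cap EA_{bal}\oskel$ separates $p$ and $q$ in the sense of \Cref{D: separation} relative to $\Gamma_A$; in particular $p,q\notin\Lambda\Stab_A(W_A)$, and $\Sigma_{P^g}\cap W_A=\emptyset$ whenever $P^g$ fixes $p$ or $q$. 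Let $e$ be an edge of $EA$ dual to $W_A$. By \Cref{L: hypergraph vertex stabilizer}, $W_e^{\EG}$ meets $EA_{bal}$ in exactly $W_A$ and $\Stab_G(W_e^{\EG})\cap G_v=\Stab_{G_v}(W_A)$. Since $A$ and $\Stab_G(W_e^{\EG})$ are relatively quasi-convex in $(G,\mc{P})$ (\Cref{QC-vertex_stabilizers}, \Cref{P: EG wall rel qc}) and $\Stab_G(W_e^{\EG})$ is full, $\Lambda A\cap\Lambda\Stab_G(W_e^{\EG})=\Lambda\bigl(A\cap\Stab_G(W_e^{\EG})\bigr)=\Lambda\Stab_A(W_A)$, which contains neither $p$ nor $q$; hence $p,q\notin\Lambda\Stab_G(W_e^{\EG})$. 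Moreover a parabolic endpoint $v_{P^g}$ lies in $\Gamma\setminus\beta(L)$, because $\Sigma_{P^g}\subseteq EA_{bal}$ is disjoint from $W_A$ and hence from $W_e^{\EG}$.

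Now we transport the sidedness. Because $W_e^{\EG}\cap EA_{bal}=W_A$ and $W_e^{\EG}$ separates $\EG_{bal}$ into exactly two pieces, \Cref{comp components of A enough} upgrades the $W_A$--sidedness of $\gamma$ inside $EA_{bal}\oskel$ to $W_e^{\EG}$--sidedness in $\EG_{bal}\oskel$: the two ends of $\gamma$ (or, in the mixed case, its far end and the relevant parabolic vertex) lie in distinct complementary components of $\EG_{bal}\oskel\setminus L$, where $L=W_e^{\EG}\cap\EG_{bal}\oskel$. As $\EG_{bal}\oskel$ is generalized fine for $(G,\mc{P})$ (\Cref{EG:gen fine}) and the hyperset carrier $J$ is quasi-convex, \Cref{P: collapsing hypersets} identifies the components of $\Gamma\setminus\beta(L)$ with those of $\EG_{bal}\oskel\setminus(S\cup L)$, so the images of these ends under $\beta$ remain in distinct components, and the parabolic vertices map as expected. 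Finally, to produce the honest witnessing geodesic required by \Cref{D: separation} when at least one of $p,q$ is conical, use that $EA_{bal}\oskel$ is $\kappa$--quasi-convex in $\EG_{bal}\oskel$ (\Cref{QC-vertex spaces}), so $\beta(EA_{bal}\oskel)$ is quasi-convex in $\Gamma$ and any $\Gamma$--geodesic (or geodesic ray) $\delta$ joining $p$ and $q$ fellow-travels $\beta\circ\gamma$; since $p,q\notin\Lambda\Stab_G(W_e^{\EG})=\overline{\beta(L)}$, the ends of $\delta$ leave every bounded neighbourhood of $\beta(L)$ and therefore eventually lie in the same --- hence distinct --- components of $\Gamma\setminus\beta(L)$ as the ends of $\beta\circ\gamma$. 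This is precisely the condition of \Cref{D: separation}, in all three cases.

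The main obstacle is this last transport step: one must ensure that sidedness with respect to the ``small'' hyperplane $W_A\subseteq EA_{bal}$ genuinely persists as sidedness with respect to the ``large'' hyperstructure $W_e^{\EG}\subseteq\EG_{bal}$ and then in the electrified graph $\Gamma$ --- that is, that $W_e^{\EG}$ does not fold back near $EA_{bal}$ away from $W_A$ and that a $\Gamma$--geodesic joining $p$ and $q$ does not leak back across $\beta(L)$ near its ends. This is exactly why the argument is routed through \Cref{comp components of A enough}, \Cref{P: collapsing hypersets}, the quasi-convexity of the fibre complexes, and the fact that $p$ and $q$ avoid $\Lambda\Stab_G(W_e^{\EG})$, rather than through a single direct geodesic estimate.
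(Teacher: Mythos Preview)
Your proof is correct and follows essentially the same strategy as the paper's: pick the hyperplane $W_A\subseteq EA$ via the sidedness lemmas of \cite{GenFine}, extend to $W_e^{\EG}$, use $\Lambda A\cap\Lambda\Stab_G(W_e^{\EG})=\Lambda\Stab_A(W_A)$ to conclude $p,q\notin\Lambda\Stab_G(W_e^{\EG})$, and then verify the three cases of \Cref{D: separation} via fellow-traveling. The difference is purely one of direction. The paper starts with a genuine geodesic $\gamma$ in the fine graph $\Gamma$, produces its \emph{complete de-electrification} $\hat\gamma$ in $\EG_{bal}\oskel$ (a device imported from \cite{GenFine}), and compares $\hat\gamma$ with a quasi-geodesic $\hat\sigma$ lying in $EA_{bal}$; the comparison thus takes place in $\EG_{bal}\oskel$. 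You instead start with the quasi-geodesic in $EA_{bal}$, push it forward along $\beta$, and compare with a $\Gamma$--geodesic $\delta$ inside $\Gamma$. Your route avoids de-electrification entirely, which is a mild simplification; the paper's route has the advantage that the key distance estimate $d(\hat\gamma(t),W_e^{\EG})\to\infty$ is established once and then used both to place $\hat\gamma(\pm t)$ outside the union $S$ of collapsed sub-graphs and to pin down sides.

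One point worth tightening: when you invoke \Cref{P: collapsing hypersets} to pass from components of $\EG_{bal}\oskel\setminus L$ to components of $\Gamma\setminus\beta(L)$, you implicitly need the ends of your $EA_{bal}$--quasi-geodesic to lie outside $S\cup L$, not merely outside $L$. This follows because each $\Sigma_{P^g}$ meeting $L$ lies in a single fibre and has uniformly bounded diameter, while \Cref{biinfinite sidedness} and \Cref{ray sidedness} push the ends of $\gamma$ arbitrarily far from $W_A=W_e^{\EG}\cap EA_{bal}$; the paper makes exactly this observation explicit. With that sentence added, the two arguments are interchangeable.
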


\begin{proof}
Choose $W_e^{\EX}$ as above so that if $W_D = W_e^{\EX}\cap ED$, $\Lambda\Stab_D (W_D)$ separates $p,q$ in $\partial_{\mc{R}_D}D$.  
We describe points in the Bowditch Boundary $\partial_{\mc{R}} K$ in terms of the fine hyperbolic graph $\Gamma$ as described in \Cref{D: fine boundary}.
As in the proof of \Cref{P: hypergraph separation}, the proof splits into the following three cases: (1) two parabolic points, (2) two conical limit points, and (3) one parabolic and one conical limit point. 

\textbf{Case 1: $p,q$ are both parabolic vertices of $\Gamma$.}  
If $p,q$ are both parabolic vertices, then $p,q$ already lie in distinct components of $\EX\setminus W_e^{\EX}$ because $W_e^{\EX}$ intersects $ED$ exactly in the hyperplane dual to $e$. 
Also, $p,q\notin \Lambda \Stab_K (W_e^{\EX})$ because $p,q\in \partial_{\mc{R}_D}D$ and $\Lambda\Stab_D (W_D)$ separates $p,q$ in $\partial_{\mc{R}_D}D$.  
Note that $\beta(p)$ and $\beta(q)$ cannot be in $L$ because otherwise $p$ or $q$ would have to be in $\Lambda\Stab_D(W_D)\subseteq \Lambda \Stab_K(W_e^{\EX})$ by \cite[Proposition 6.1]{GenFine}. Then $\beta(p),\beta(q)$ are in distinct components of $\Gamma \setminus \beta(L)$. 

\textbf{Setup for the remaining cases: }
Let $\gamma$ be a geodesic in $\Gamma$ between $p,q$ (recall \Cref{biinfinite-geod} implies existence of $\gamma$). 
Recall that $(K, \mc R)$ admits a generalized fine action on $\EX\oskel$ \Cref{EG:gen fine}.
By \cite[Definition 2.6\, and Corollary 2.11]{GenFine}, there exists a quasi-geodesic $\hat\gamma$ in $\EX\oskel$ called the \textbf{complete de-electrification} of $\gamma$ such that $\beta(\hat{\gamma}) = \gamma$. 
%

\textbf{Case 2: $p,q$ are both conical limit points.} 
Now suppose $p,q$ are both conical limit points.  
Since $W_D$ separates $p,q$, there exists a quasi-geodesic $\hat\sigma:(-\infty,\infty)\to ED$ so that $\sigma  = \beta(\hat{\sigma})$ is geodesic in $\beta(ED)$ between $p,q$ and $\sigma$ has the properties described in \Cref{biinfinite sidedness}. 
By quasi-geodesic stability, there exists an $R_1\ge 0$ so that $d_{Haus}(\hat\sigma,\hat\gamma)<R_1$. 

By \Cref{biinfinite sidedness}, for any $M>0$, there exists $T>>0$ so that if $|t|>T$, $d_{ED\oskel}(W_D , \hat{\sigma}(t))>M$.

Claim: for $N>0$, there exists $T_N$ so that if $|t|>T_N$, $d_{\EX\oskel}(\hat\gamma(t),W_e^{\EX})>N$.

If there exists a sequence $t_i\to\infty$ so that $d_{\EX\oskel}(\hat\gamma(t_i),W_e^{\EX})\le N$, then eventually $\beta(\hat{\gamma})=\gamma$ remains within a bounded distance of $\beta(L)$ by quasiconvexity of $\beta(J)$. 
Then as $t\to\infty$, $\gamma(t)$ converges to either $p$ or $q$, so one of these lie in $\Lambda \Stab_K(L)$. 
Suppose without loss of generality that $\gamma(t)\to p$ as $t\to\infty$.
Then $p\in \Lambda D \cap \Lambda \Stab_K(L) = \Lambda \Stab_K (W_D)$ by relative quasiconvexity which contradicts the fact that $W_D$ separates $p,q$. 

Claim: for $|t|>>0$, $\hat\gamma(\pm t)$ lie in distinct components of $\EX\oskel \setminus L$. For each $t$, there exist $t'_t\in \reals$ so that $d(\hat\sigma(t'_t),\hat\gamma(t)) <R_1$. 
Since $\hat\gamma,\hat\sigma$ are quasi-geodesics, we can ensure for $|t|>>0$ that $|t'_t|>>0$ and $d(\hat\gamma(t),W_D)>R_1$. Thus for $t>>0$, $\hat\gamma(t)$ and $\sigma(t'_t)$ lie in the same component of $\EX\setminus L$. 
Since $\hat\sigma(\pm t')$ eventually lie in distinct components of $\EX\oskel\setminus L$, for $t>>0$, $\hat\gamma(\pm t)$ also lie in distinct components of $\EX\oskel\setminus L$. 

For any $D\in \mc{D}$ and $g\in K$, the sub-graph whose cells are stabilized by $\Sigma_{D^g}$ has finite diameter. So if $\Sigma_{D^g}$ has an edge dual to $W_e^{\EX}$, then for $t>>0$, $\hat\gamma(\pm t)$ cannot lie in $\Sigma_{D^g}$ by \Cref{biinfinite sidedness}. 
Therefore, for all  $t_- <<0 $ and $t_+>>0$ $\beta(\hat\gamma(t_-)),\beta(\hat\gamma(t_+))$ lie in distinct components of $\Gamma\setminus \beta(L)$. 
Since $\beta(\hat\gamma)=\gamma$, $\beta(L)$ separates $p,q$. 

\textbf{Case 3: $p$ is a conical limit point and $q$ is a parabolic vertex. }
Since $W_D$ separates $p,q$, there exists a quasi-geodesic $\hat\sigma:(-\infty,\infty)\to ED$ so that $\sigma  = \beta(\hat{\sigma})$ is geodesic in $\beta(ED)$ between $p,q$ and $\sigma$ has the properties described in \Cref{ray sidedness}. 
In this case, parameterize $\sigma:[0,\infty)\to \Gamma$, $\hat\sigma:[0,\infty)\to \EX\oskel$, $\hat\gamma:[0,\infty)\to \EX\oskel$ so that $\gamma(0) = \sigma(0) = \beta(\hat\gamma(0))=q$. 
By \Cref{ray sidedness}, for $t>>0$, $\hat\sigma(t)$ and $\hat\sigma(0)$ lie on different sides of $W_D$ and $d(\hat{\sigma}(t),W_D)$ can be made arbitrarily large. Following an argument similar to the one in the previous case, for $t'>>0$, $\hat\gamma(t')$ can be made arbitrarily far from $W_e^{\EX}$ so that $p \notin \Lambda \Stab_K(L)$, and $\hat\gamma(t)$ lies in the same component of $\EX\oskel\setminus L$ as $\hat\sigma(t')$ for $t,t'>>0$. 
Note that $\beta(q)$ cannot lie in $\beta(L)$ by an argument similar to the one in Case 1.  
Hence for $t''>>0$, $\gamma(t'')$ and $\gamma(0) = q$ must lie in distinct components of $\Gamma\setminus\beta(L)$. 
\end{proof}

While $D$ was used in \Cref{P: separation in fiber case} to simplify notation, $D$ can be replaced by the (maximal parabolic in $(K,\mc{D})$) stabilizer of any fiber complex without issue. 
We now show that pairs of points in $\partial_{\mc{R}} K$ with the same image in $\partial_{\mc{D}} K$ can be separated by some $\EX$--hyperstructure associated to an edge in the corresponding fiber complex.

\begin{theorem}\label{T: EG Wall bdd separation}
Suppose $(\EX,\pi)$ satisfies the projected wall tree, projected wall fullness and two-sided wall projection properties.  
Let $ED^g$ be a fiber complex over $v$ and let $(D^g,\mc{R}_D^g)$ be the induced peripheral structure on the stabilizer $D^g$ of $ED^g$. 
Let $p, q \in\partial_{\mc{R}_{D}^g} D^g \subseteq \partial_{\mc{R}}K$ be distinct, then there exist $W_e^{\EX}$ for some edge $e$ of $ED^g$ so that for some finite index $H_W\le \Stab_K(W_e^{\EX})$, $p$ and $q$ are in $H_W$--distinct components of $\partial_{\mc{D}}K \setminus H_W$. 
\end{theorem}

\begin{proof}
By \Cref{P: separation in fiber case} there exists a $W_e^{\EX}$ that separates $p,q$. The action of $K$ on $\Gamma$ has trivial edge stabilizers, the action is cocompact and every maximal parabolic fixes a vertex.  Therefore, by \Cref{T: separation criterion} (using \Cref{P: two-sided carrier holds} to ensure the hypotheses hold), there exists a finite index subgroup $H_W$ of $\Stab_K(W_e^{\EX})$ so that $p$ and $q$ lie in $H_W$-distinct components of $\partial_{\mc{R}}K\setminus \Lambda H_W$. 
\end{proof}

\subsection{Applying the boundary criterion}

We now prove a more general theorem that will imply \Cref{mainthm} after proving $\EG_{bal}$ (together with the natural projection) is a relatively geometric blowup of $(X_{bal},K,\mc{D})$. 

We will use the boundary criterion for relatively geometric cubulation criterion.
\cubulationcriterion*

The following is a naive application of \Cref{t:Rel BW} and subsequent refinements result in stronger consequences on the peripheral structures.

\begin{theorem}\label{T: blowup cubulation}
If $(\EX,\pi)$ is a relatively geometric blowup of $(X_0,K,\mc{D})$ that satisfies the projected wall tree, projected wall fullness and two-sided wall projection properties (recall Definition~\ref{D: blowup projection properties}, then there is a refined peripheral structure $(K,\mc{R}')$ of $(K,\mc{D})$ and $(K,\mc{R})$ so that $(K,\mc{R}')$ acts relatively geometrically on a CAT(0) cube complex.
\end{theorem}

\begin{proof}[Proof of \Cref{T: blowup cubulation}]
Let $p,q\in \partial_{\mc{R}}K$ be distinct points in the Bowditch Boundary. Let $\mc{R}_{D}^g$ be the induced peripheral structure on a  fiber complex  stabilizer $D^g$.  
By \cite[Theorem 1.1]{WenyuanYang2014}, the fiber complex stabilizers are relatively quasiconvex in $(K,\mc{R})$.
Thus $\partial_{\mc{R}_D}D^g$ embeds in $\partial_{\mc{R}}K$. 
Recall that $(K,\mc{D})$ is a relatively hyperbolic pair, so fiber complex stabilizers have pairwise finite intersections. Thus Bowditch boundaries with respect the induced peripheral structures of distinct fiber groups embed as disjoint subspaces in $\partial_{\mc{R}}K$. 

If $p,q$ both lie in $\partial_{\mc{R}_D^g}D^g$, then \Cref{T: EG Wall bdd separation} implies that there exists a full relatively quasiconvex (in $(K,\mc{R})$  $H \le K$ so that $p,q$ lie in $H$--distinct components of $\partial_{\mc{R}}K\setminus \Lambda H$. 

On the other hand if $p,q$ are not both in the boundary of a single fiber group, then \Cref{C: multifiber bdd separation} implies that there exists a full relatively quasiconvex $H_{p,q}\le K$ so that $p,q$ lie in $H_{p,q}$--distinct components of $\partial_{\mc{R}}K\setminus \Lambda H_{p,q}$. 

Then, \Cref{t:Rel BW} implies that there exists a refined peripheral structure $(K,\mc{R}')$ for $(K,\mc{R})$ so that $(K,\mc{R}')$ acts relatively geometrically on a CAT(0) cube complex. 
\end{proof}

Let us now verify that the moreover statement of \Cref{t:Rel BW} holds, so no refinement is needed.

\begin{proposition}
\label{P:ellipticPeripherals}
Suppose that $(\EX, \pi)$ is a relatively geometric blowup of $(X_0, K, \mc{D})$ that satisfies \Cref{D: blowup projection properties} and $(K, \mc{R})$ is the associated refined peripheral structure. 
Each peripheral subgroup $R \in \mc{R}$ acts elliptically on the dual cube complex.
\end{proposition}

\begin{lemma}
\label{L:halfspace correspondence}
Suppose that $W$ is any $\EX$--hyperstructure and $H_W < \Stab_K(W)$ is the index 2 subgroup without inversion in $W$.  A choice of complementary component determines a halfspace of some sufficiently large neighborhood of $H_W$ in $K$.
\end{lemma}
\begin{proof}
By construction hyperstructures are two-sided.  
There is an equivariant quasi-isometry between $\EX$ and the coned-off Cayley graph $\Gamma(G, \mc{R})$ by Charney and Crisp \cite[Theorem~5.1]{CharneyCrisp}, so $W$--complementary components induce (coarse) halfspaces of $H_W$ each containing a neighborhood of $\Stab_K(W)$.
Moreover, these halfspaces are $H_W$--invariant since $H_W$ does not invert $W$.

To obtain coarse halfspaces in $K$, one verifies that unconing $\Gamma(G, \mc{R})$ leaves the halfspaces unchanged.  
Indeed, if $\Sigma_{R^g}$ is the subgraph will cell stabilizers commensurable to $R^g$ (as in \Cref{EG:gen fine}) is disjoint from $W$ then $R$ cannot identify large neighborhoods of $H_W$ because it has deep intersection with at most one halfspace of $H_W$. 
On the other hand, if $\Sigma_{R^g}$ intersects $W$ then by fullness $H_W \cap R^g$ is finite index in $R^g$, so a sufficiently large neighborhood of $H_W$ will be contained in a single halfspace.
\end{proof}

\begin{proof}[Proof of \Cref{P:ellipticPeripherals}]
If any $R \in \mc{R}$ is finite then it already acts elliptically since isometries of finite dimensional CAT(0) cube complexes are semisimple \cite{Haglund2008}.  Up to cubical subdividing we can and will assume that elliptic subgroups fix a vertex.  

By definition of relatively geometric blowup, up to conjugation any peripheral subgroup stabilizes a vertex $x$ contained in some fiber complex $\pi\inv(v)$.  
Let $K_x$ denote the stabilizer of $x$.  

By construction, each hyperstructure (in either $X_0$ or $\EX$) is disjoint from vertices, so its complementary components determine a bipartition of the vertices.  

For each $X_0-$hyperstructure, choose the complementary component containing $v$.
For each $\EX-$hyperstructure choose the complementary component that contains $x$.
The above choices determine a principal ultrafilter and hence a vertex of the dual cube complex.
Indeed, the cube complex obtained using \Cref{t:Rel BW} is dual to stabilizers of hyperstructures in $X_0$ and $\EX$ and \Cref{L:halfspace correspondence} demonstrates that choices of halfspaces are determined by choices of complementary components of the corresponding hyperstructure.  

Finally, $K_x$ stabilizes $x$ in $\EX$ and hence $v$ in $X_0$.  Thus, no element of $K_x$ will flip any of the halfspaces. 
Therefore, $K_x$ fixes a vertex in the dual cube complex.
\end{proof}

\begin{cor}
\label{C: blowup refined}
Suppose that $(\EX, \pi)$ is a relatively geometric blowup of $(X_0, K, \mc{D})$ that satisfies \Cref{D: blowup projection properties} and $(K, \mc{R})$ is the associated refined peripheral structure. 
Then $K$ admits a relatively geometric action on a CAT(0) cube complex with respect to either $(K, \mc{D})$ or $(K, \mc{R})$.
\end{cor}
\begin{proof}
The relative cubulation for $(K, \mc{R})$ is immediate from \Cref{T: blowup cubulation} and \Cref{P:ellipticPeripherals}.
For $(K, \mc{D})$, recall from \Cref{R: trivial blowup} that $X_0$ is a relatively geometric blowup of itself where the refined peripheral structure is $(K, \mc{D})$.
\end{proof}

\section{Relative Cubulations for Small Cancellation Free Products of Relatively Cubulable Groups.}\label{S: conclusion}

\subsection{$\EG_{bal}$ is a blowup}
\label{S: EG hypotheses satisfied}

We now verify that if $p:\EG_{bal}\to X_{bal}$ is the natural projection from Section~\ref{WallsFromX}, then $(\EG_{bal},p)$ is a relatively geometric blowup of $(X_{bal},K,\mc{D})$.  

We previously verified Hypotheses~\ref{H: polygonal wall hypotheses} in \Cref{S: X hypotheses satisfied}. 
We now verify Definition~\ref{D: blowup} and \Cref{D: blowup projection properties} item by item:

The cell structure and even-sided condition follow from the choice of subdivision in the definition of $\EG_{bal}$, and simply connectedness and cocompactness of the $G$ action follow from \Cref{EG cocompactness}
verifying (\ref{I: simply connected blow up}) and (\ref{I: cocompact blow up}).

Items~\eqref{I: blowup fiber comp from vertices} and \eqref{I: polygons lift to blowup polygons} follow  from the description of the fibers of $\EG$ given in \Cref{SS:blowupConstruction} and the fact that the subdivision of $X,\EG$ to form $X_{bal},\EG_{bal}$ takes polygons to polygons and cubes to cubes. 

Item~\eqref{I: vertex condition} is also immediate from the construction in \Cref{SS:blowupConstruction} in that the fibers over the vertices stabilized by $A,B$ are specifically constructed to be copies of $EA$ and $EB$ admitting relatively geometric actions of $(A,\mc{P}_A)$ and $(B,\mc{P}_B)$ respectively. 

We show that $(\EG_{bal},p)$  satisfies each Items~(\ref{I:PWT}),(\ref{I:PWF}),(\ref{I:2SWP}) of \Cref{D: blowup projection properties}. 

\begin{proposition}\label{P: EG walls have projection properties}
The blowup $(\EG_{bal},p)$ has the wall tree projection property, the projected wall fullness property and two-sided wall projection property.
\end{proposition}

\begin{proof}
The wall tree projection property follows from \cite[Lemma 3.32]{MartinSteenbock}. The projected wall fullness property follows from Proposition~\ref{P: protofull}. 

To see that the two-sided wall projection property hold consider $U$ and $U^*$ the two closed halfspaces of an arbitrary wall $W = W_e^{\EG}$.  
It suffices to show that $p(U) \cap p(U^*) = p(U \cap U^*)$. 

Let $x \in p(U) \cap p(U^*)$.  
By \Cref{D: blowup}, the projection $p: \EG_{bal} \to X_{bal}$ restricts to a homeomorphism on the preimage of each open 1--cell or 2--cell of $X_{bal}$, moreover, the preimage of each 0--cell is a (connected) fiber complex.  

Hence, if $x$ is contained in the interior of a 1--cell or 2--cell then there is a unique point $\tilde{x} \in p^{-1}(x)$ and it must be that $\tilde{x} \in U \cap U^*$.  
Otherwise, $x$ is a 0--cell of $X_{bal}$ with preimage $E_x$ a fiber complex.  
Since $x \in p(U)$ it must be that $E_x \cap U \neq \varnothing$, and similarly, $E_x \cap U^* \neq \varnothing$.
Since $U,U^*$ are half spaces of $W$ that both intersect $E_x$, $U\cap E_x, \, U^*\cap E_x$ are half spaces of a hyperplane $W\cap E_x$ in $E_x$. Thus there exists a point $\tilde{x} \in E_x \cap (U \cap U^*)$ such that $p(\tilde{x}) = x \in p(U \cap U^*)$ as desired.  
\end{proof}


\subsection{The proof of \Cref{mainthm}}

We are now ready to prove \Cref{mainthm}:

\mainthm*

\begin{proof}
Construct the complexes $X_{bal}$ and $\EG_{bal}$ as constructed in \Cref{S: wall construction} with a natural combinatorial projection $pr:\EG_{bal}\to X_{bal}$. 

In Section~\ref{S: EG hypotheses satisfied}, we show that $(\EG_{bal},\pi)$ is a relatively geometric blowup of $(X_{bal},G,\mc{Q})$, which gives $G$ the peripheral structure $(G,\mc{P})$. In \Cref{P: EG walls have projection properties}, we show the remaining properties in \Cref{D: blowup projection properties} hold. 
Then \Cref{C: blowup refined} implies the result. 
\end{proof}

\bibliography{cubes}

\newcommand{\etalchar}[1]{$^{#1}$}
\begin{thebibliography}{MPW11}

\bibitem[Ago08]{AgolVirtualFibering}
Ian Agol.
\newblock Criteria for virtual fibering.
\newblock {\em J. Topol.}, 1(2):269--284, 2008.

\bibitem[Ago13]{AgolVirtualHaken}
Ian Agol.
\newblock The {V}irtual {H}aken {C}onjecture.
\newblock {\em Doc. Math.}, 18:1045--1087, 2013.
\newblock With an appendix by Agol, Daniel Groves, and Jason Manning.

\bibitem[AM24]{AbbottManning}
Carolyn~R. Abbott and Jason~F. Manning.
\newblock Acylindrically hyperbolic groups and their quasi-isometrically
  embedded subgroups.
\newblock {\em Michigan Math. J.}, 74(2):357--402, 2024.

\bibitem[AS23]{ArzhantsevaSteenbock14}
Goulnara Arzhantseva and Markus Steenbock.
\newblock Rips construction without unique product.
\newblock {\em Pacific J. Math.}, 322(1):1--9, 2023.

\bibitem[Bes04]{BestvinaProblems}
Mladen Bestvina.
\newblock Questions in geometric group theory.
\newblock pdf avaiable online
  \href{https://www.math.utah.edu/~bestvina/eprints/questions-updated.pdf}{\texttt{https://www.math.utah.edu/~bestvina/eprints/questions-updated.pdf}},
  2004.

\bibitem[BH99]{BridsonHaefliger}
Martin~R. Bridson and Andr{\'e} Haefliger.
\newblock {\em Metric spaces of non-positive curvature}, volume 319 of {\em
  Grundlehren der Mathematischen Wissenschaften [Fundamental Principles of
  Mathematical Sciences]}.
\newblock Springer-Verlag, Berlin, 1999.

\bibitem[BM14]{BurgerMozes}
Marc Burger and Shahar Mozes.
\newblock Lattices in products of trees and a theorem of {H}. {C}. {W}ang.
\newblock {\em Bull. Lond. Math. Soc.}, 46(6):1126--1132, 2014.

\bibitem[BMS02]{OpenProblemsGGT-BMS}
Gilbert Baumslag, Alexei~G Myasnikov, and Vladimir Shpilrain.
\newblock Open problems in combinatorial group theory.
\newblock {\em Combinatorial and Geometric Group Theory: AMS Special Session,
  Combinatorial Group Theory, November 4-5, 2000, New York, New York: AMS
  Special Session, Computational Group Theory, April 28-29, 2001, Hoboken, New
  Jersey}, 296:1, 2002.

\bibitem[Bow08]{BowditchTightGeodesics}
Brian~H. Bowditch.
\newblock Tight geodesics in the curve complex.
\newblock {\em Invent. Math.}, 171(2):281--300, 2008.

\bibitem[Bow12]{BowditchRH}
Brian~H. Bowditch.
\newblock Relatively hyperbolic groups.
\newblock {\em Internat. J. Algebra Comput.}, 22(3):1250016, 66, 2012.

\bibitem[BW12]{BergeronWise}
Nicolas Bergeron and Daniel~T. Wise.
\newblock A boundary criterion for cubulation.
\newblock {\em Amer. J. Math.}, 134(3):843--859, 2012.

\bibitem[Cap19]{CapraceBMWsurvey}
Pierre-Emmanuel Caprace.
\newblock Finite and infinite quotients of discrete and indiscrete groups.
\newblock In {\em Groups {S}t {A}ndrews 2017 in {B}irmingham}, volume 455 of
  {\em London Math. Soc. Lecture Note Ser.}, pages 16--69. Cambridge Univ.
  Press, Cambridge, 2019.

\bibitem[CC07]{CharneyCrisp}
Ruth Charney and John Crisp.
\newblock Relative hyperbolicity and {A}rtin groups.
\newblock {\em Geom. Dedicata}, 129:1--13, 2007.

\bibitem[Deh87]{DehnBook}
Max Dehn.
\newblock {\em Papers on group theory and topology}.
\newblock Springer-Verlag, New York, 1987.
\newblock Translated from the German and with introductions and an appendix by
  John Stillwell, With an appendix by Otto Schreier.

\bibitem[Del99]{DelzantAcyl}
Thomas Delzant.
\newblock Sur l'accessibilit\'{e} acylindrique des groupes de pr\'{e}sentation
  finie.
\newblock {\em Ann. Inst. Fourier (Grenoble)}, 49(4):1215--1224, 1999.

\bibitem[EG20]{RelCannon}
Eduard Einstein and Daniel Groves.
\newblock Relative cubulations and groups with a 2-sphere boundary.
\newblock {\em Compos. Math.}, 156(4):862--867, 2020.

\bibitem[EG22]{RelGeom}
Eduard Einstein and Daniel Groves.
\newblock Relatively geometric actions on {$\rm CAT(0)$} cube complexes.
\newblock {\em J. Lond. Math. Soc. (2)}, 105(1):691--708, 2022.

\bibitem[EGN24]{GenFine}
Eduard Einstein, Daniel Groves, and Thomas Ng.
\newblock Separation and relative quasiconvexity criteria for relatively
  geometric actions.
\newblock {\em Groups Geom. Dyn.}, 18(2):649--676, 2024.

\bibitem[Ein19]{ArxivThesis}
Eduard Einstein.
\newblock Hierarchies for relatively hyperbolic virtually special groups.
\newblock 2019.
\newblock \href{http://arxiv.org/abs/1903.12284}{\texttt{arXiv:1904.12284}}.

\bibitem[EMN24]{PeripheralsInfEnds}
Eduard Einstein, Suraj~Krishna MS, and Thomas Ng.
\newblock On the boundary criterion for relative cubulation: multi-ended
  parabolics.
\newblock 2024.
\newblock \href{http://arxiv.org/abs/2409.14290}{\texttt{arXiv:2409.14290}}.

\bibitem[GM22]{GM20}
Daniel Groves and Jason~Fox Manning.
\newblock Specializing cubulated relatively hyperbolic groups.
\newblock {\em J. Topol.}, 15(2):398--442, 2022.

\bibitem[GMS15]{GruberMartinSteenbock}
Dominik Gruber, A.~Martin, and M.~Steenbock.
\newblock Finite index subgroups without unique product in graphical small
  cancellation groups.
\newblock {\em Bull. Lond. Math. Soc.}, 47(4):631--638, 2015.

\bibitem[Gre60a]{Greendlinger2}
Martin Greendlinger.
\newblock Dehn's algorithm for the word problem.
\newblock {\em Comm. Pure Appl. Math.}, 13:67--83, 1960.

\bibitem[Gre60b]{Greendlinger1}
Martin Greendlinger.
\newblock On {D}ehn's algorithms for the conjugacy and word problems, with
  applications.
\newblock {\em Comm. Pure Appl. Math.}, 13:641--677, 1960.

\bibitem[Gro87]{GromovHyperbolicGroups}
M.~Gromov.
\newblock Hyperbolic groups.
\newblock In {\em Essays in group theory}, volume~8 of {\em Math. Sci. Res.
  Inst. Publ.}, pages 75--263. Springer, New York, 1987.

\bibitem[GS18]{GruberSisto}
Dominik Gruber and Alessandro Sisto.
\newblock Infinitely presented graphical small cancellation groups are
  acylindrically hyperbolic.
\newblock {\em Ann. Inst. Fourier (Grenoble)}, 68(6):2501--2552, 2018.

\bibitem[H{\etalchar{+}}95]{HitchinProblems}
NJ~Hitchin et~al.
\newblock {\em Combinatorial and Geometric Group Theory, Edinburgh 1993},
  volume 204.
\newblock Cambridge University Press, 1995.

\bibitem[Hag08]{Haglund2008}
Fr\'ed\'eric Haglund.
\newblock Finite index subgroups of graph products.
\newblock {\em Geom. Dedicata}, 135:167--209, 2008.

\bibitem[Hru10]{Hruska2010}
G.~Christopher Hruska.
\newblock Relative hyperbolicity and relative quasiconvexity for countable
  groups.
\newblock {\em Algebr. Geom. Topol.}, 10(3):1807--1856, 2010.

\bibitem[HW14]{HruskaWise14}
G.~Christopher Hruska and Daniel~T. Wise.
\newblock Finiteness properties of cubulated groups.
\newblock {\em Compos. Math.}, 150(3):453--506, 2014.

\bibitem[HW15]{HW2015}
Tim Hsu and Daniel~T. Wise.
\newblock Cubulating malnormal amalgams.
\newblock {\em Invent. Math.}, 199(2):293--331, 2015.

\bibitem[JW22]{JankiewiczWise}
Kasia Jankiewicz and Daniel~T. Wise.
\newblock Cubulating small cancellation free products.
\newblock {\em Indiana Univ. Math. J.}, 71(4):1397--1409, 2022.

\bibitem[LS01]{LyndonSchupp}
Roger~C. Lyndon and Paul~E. Schupp.
\newblock {\em Combinatorial group theory}.
\newblock Classics in Mathematics. Springer-Verlag, Berlin, 2001.
\newblock Reprint of the 1977 edition.

\bibitem[Mar14]{Martin:NPC_boundary}
Alexandre Martin.
\newblock Non-positively curved complexes of groups and boundaries.
\newblock {\em Geom. Topol.}, 18(1):31--102, 2014.

\bibitem[MPW11]{WiseMP10}
Eduardo Mart\'{\i}nez-Pedroza and Daniel~T. Wise.
\newblock Relative quasiconvexity using fine hyperbolic graphs.
\newblock {\em Algebr. Geom. Topol.}, 11(1):477--501, 2011.

\bibitem[MS71]{MillerSchupp}
Charles~F. Miller, III and Paul~E. Schupp.
\newblock Embeddings into {H}opfian groups.
\newblock {\em J. Algebra}, 17:171--176, 1971.

\bibitem[MS17]{MartinSteenbock}
Alexandre Martin and Markus Steenbock.
\newblock A combination theorem for cubulation in small cancellation theory
  over free products.
\newblock {\em Ann. Inst. Fourier (Grenoble)}, 67(4):1613--1670, 2017.

\bibitem[Osi16]{OsinAH}
Denis~V. Osin.
\newblock Acylindrically hyperbolic groups.
\newblock {\em Trans. Amer. Math. Soc.}, 368(2):851--888, 2016.

\bibitem[Pan99]{Pankratev}
Anton~E. Pankrat'ev.
\newblock Hyperbolic products of groups.
\newblock {\em Vestnik Moskov. Univ. Ser. I Mat. Mekh.}, (2):9--13, 72, 1999.

\bibitem[Rey23]{OregonReyes}
Eduardo Reyes.
\newblock On cubulated relatively hyperbolic groups.
\newblock {\em Geom. Topol.}, 27(2):575--640, 2023.

\bibitem[Rip82]{RipsConstruction}
Eliyahu Rips.
\newblock Subgroups of small cancellation groups.
\newblock {\em Bull. London Math. Soc.}, 14(1):45--47, 1982.

\bibitem[RS87]{RipsSegev}
Eliyahu Rips and Yoav Segev.
\newblock Torsion-free group without unique product property.
\newblock {\em J. Algebra}, 108(1):116--126, 1987.

\bibitem[Sag14]{SageevNotes}
Michah Sageev.
\newblock {$\rm CAT(0)$} cube complexes and groups.
\newblock In {\em Geometric group theory}, volume~21 of {\em IAS/Park City
  Math. Ser.}, pages 7--54. Amer. Math. Soc., Providence, RI, 2014.

\bibitem[Sch76]{SchuppEmbeddingsSimple}
Paul~E. Schupp.
\newblock Embeddings into simple groups.
\newblock {\em J. London Math. Soc. (2)}, 13(1):90--94, 1976.

\bibitem[Sel97]{Sela97}
Zlil Sela.
\newblock Acylindrical accessibility for groups.
\newblock {\em Invent. Math.}, 129(3):527--565, 1997.

\bibitem[Ste15]{SteenbockRS}
Markus Steenbock.
\newblock Rips-{S}egev torsion-free groups without the unique product property.
\newblock {\em J. Algebra}, 438:337--378, 2015.

\bibitem[SW15]{SW2015}
Michah Sageev and Daniel~T. Wise.
\newblock Cores for quasiconvex actions.
\newblock {\em Proc. Amer. Math. Soc.}, 143(7):2731--2741, 2015.

\bibitem[Tar49]{Tartakovskii49}
Vladimir~A. Tartakovski\u{\i}.
\newblock Solution of the word problem for groups with a {$k$}-reduced basis
  for {$k>6$}.
\newblock {\em Izvestiya Akad. Nauk SSSR. Ser. Mat.}, 13:483--494, 1949.

\bibitem[Wis03]{WiseRFRips}
Daniel~T. Wise.
\newblock A residually finite version of {R}ips's construction.
\newblock {\em Bull. London Math. Soc.}, 35(1):23--29, 2003.

\bibitem[Wis04]{WiseSmallCancellation}
Daniel~T. Wise.
\newblock Cubulating small cancellation groups.
\newblock {\em Geom. Funct. Anal.}, 14(1):150--214, 2004.

\bibitem[Wis07]{WiseCSC}
Daniel~T. Wise.
\newblock Complete square complexes.
\newblock {\em Comment. Math. Helv.}, 82(4):683--724, 2007.

\bibitem[Wis12]{WiseBook}
Daniel~T. Wise.
\newblock {\em From riches to raags: 3-manifolds, right-angled {A}rtin groups,
  and cubical geometry}, volume 117 of {\em CBMS Regional Conference Series in
  Mathematics}.
\newblock Published for the Conference Board of the Mathematical Sciences,
  Washington, DC; by the American Mathematical Society, Providence, RI, 2012.

\bibitem[Wis21]{WiseManuscript}
Daniel~T. Wise.
\newblock {\em The structure of groups with a quasiconvex hierarchy}, volume
  209 of {\em Annals of Mathematics Studies}.
\newblock Princeton University Press, Princeton, NJ, [2021] \copyright 2021.

\bibitem[Yan14]{WenyuanYang2014}
Wen-yuan Yang.
\newblock Peripheral structures of relatively hyperbolic groups.
\newblock {\em J. Reine Angew. Math.}, 689:101--135, 2014.

\end{thebibliography}
\bibliographystyle{alpha}
\end{document}